\title{The infinite simple group~$V$ of Richard~J. Thompson:
  presentations by permutations}
\author{Collin Bleak \& Martyn Quick\\[10pt]
  School of Mathematics \& Statistics, University of St Andrews,\\
  St Andrews, Fife, KY16 9SS, United Kingdom\\
  \texttt{cb211@st-andrews.ac.uk}, \texttt{mq3@st-andrews.ac.uk}}
\numberwithin{equation}{section}
\newcommand{\A}{\mathcal{A}}
\newcommand{\C}{\mathfrak{C}}
\newcommand{\eqstate}[1]{Equation~(\ref{#1})}
\newcommand{\GAP}{\textsf{GAP}}
\newcommand{\length}[1]{\mathopen{|}#1\mathclose{|}}
\newcommand{\nbd}{\nobreakdash-}
\newcommand{\R}{$\mathcal{R}$}
\newcommand{\set}[2]{\{\,#1\mid#2\,\}}
\newcommand{\spc}{\vspace{\baselineskip}}
\newcommand{\swap}[2]{(#1\;#2)}
\newcommand{\Sw}[1]{\mathcal{T}_{#1}}
\newcommand{\supp}{\operatorname{supp}}
\renewcommand{\epsilon}{\varepsilon}
\renewcommand{\geq}{\geqslant}
\renewcommand{\leq}{\leqslant}
\renewcommand{\emptyset}{\varnothing}
\newcommand{\qed}{\hspace*{\fill}$\square$}
\newtheorem{thm}{Theorem}[section]
\newtheorem{lemma}[thm]{Lemma}
\newtheorem{cor}[thm]{Corollary}
\newtheorem{prop}[thm]{Proposition}
\newenvironment{proof}{%
  \begin{trivlist}\item\textsc{Proof:}}{\qed\end{trivlist}}
\renewcommand{\theenumi}{(\roman{enumi})}
\renewcommand{\labelenumi}{{\normalfont\theenumi}}
\begin{document}

\maketitle

\begin{abstract}
  We show that one can naturally describe elements of R.~Thompson's
  finitely presented infinite simple group~$V$, known by Thompson to
  have a presentation with four generators and fourteen relations, as
  products of permutations analogous to transpositions.  This
  perspective provides an intuitive explanation towards the simplicity
  of~$V$ and also perhaps indicates a reason as to why it was one of
  the first discovered infinite finitely presented simple groups: it
  is (in some basic sense) a relative of the finite alternating
  groups.  We find a natural infinite presentation for~$V$ as a group
  generated by these ``transpositions,'' which presentation bears
  comparison with Dehornoy's infinite presentation and which enables
  us to develop two small presentations for~$V$: a human-interpretable
  presentation with three generators and eight relations, and a
  Tietze-derived presentation with two generators and seven
  relations.

\end{abstract}

\paragraph{Mathematics Subject Classification (2010).} Primary: 20F05;
Secondary: 20E32, 20F65.

\paragraph{Keywords.} Thompson's groups, simple groups, presentations,
generators and relations, permutations, transpositions.

\section{Introduction}

In this article, we investigate R.~Thompson's group~$V$ from a
mostly-unexplored perspective.  As a consequence we derive new, and
hopefully elegant, presentations of this well-known group and
introduce a simple and dextrous notation for handling computations in
$V$.

Recall that the group~$V$ first appears in Thompson's 1965
notes~\cite{ThompsonNotes} and is given there as one of two
``first-examples'' of infinite finitely presented simple groups (along
with its simple subgroup~$T$, called~``\textbf{C}'' in those notes).
Since then, it has been the focus of a large amount of subsequent
research (see, for example, \cite{BMM, BleakSal, Brin-higher,
  Dehornoy, Higman, Lawson-completions, Thumann} for a small part of
that research).  Thompson's group~$V$ arises in various other
settings, for example, Birget~\cite{Birget-circuits, Birget2}
investigates connections to circuits and complexity while
Lawson~\cite{Lawson2} considers links to inverse monoids and \'{e}tale
groupoids.

We shall demonstrate that one can consider~$V$ as a symmetric group
acting, not on a finite set, but instead on a Cantor algebra (the
algebra of basic clopen sets in a Cantor space).  We focus upon
certain well-known properties of a finite symmetric group, namely
being generated by transpositions and being transitive in its natural
action.  Reflecting these two fundamental properties, a finite
symmetric group possesses a Coxeter-type presentation, with generating
set~$\mathcal{T}$ corresponding to a set of appropriate transpositions
and relations $t^{2} = 1$ for all $t \in \mathcal{T}$, \ $(tu)^{2} =
1$ when $t,u \in \mathcal{T}$ correspond to transpositions of disjoint
support and $(tu)^{3} = 1$ when $t \neq u$ but the corresponding
transpositions have intersecting support.  If we exploit the fact that
these generators have order~$2$, this third type of relation can be
rewritten as $t^{-1}ut = u^{-1}tu$ and indeed in the symmetric group
this conjugate equals another transposition~$v$, namely that whose
support satisfies $\supp v = (\supp t)u$.

In the context of a Cantor algebra, the analogues of transpositions
are piecewise affine maps which ``swap'' a pair of basic open sets.
We shall observe that Thompson's group~$V$ is generated by such
transpositions of the standard Cantor algebra and hence derive an
infinite Coxeter-like presentation for~$V$, as appears in
Theorem~\ref{thm:infpres} below.

As is well known, the standard Cantor algebra admits a natural
tree-structure where the nodes correspond to the basic open sets in
Cantor space~$\C$ and these nodes are indexed by finite words in the
alphabet $X = \{0,1\}$. Consequently, we label our transpositions by
two incomparable words $\alpha$~and~$\beta$ from~$X^{\ast}$.  Indeed,
for such $\alpha$~and~$\beta$, we write~$t_{\alpha,\beta}$ for the
element of~$V$ that is the transposition defined in
Equation~\eqref{eq:swapmap}.  We shall also write
$s_{\alpha,\beta}$~and~$\swap{\alpha}{\beta}$ for symbols representing
elements in two abstract groups whose presentations we give in
Theorems~\ref{thm:infpres} and~\ref{thm:main}, respectively.  We
specifically use different notations for each of these elements so as
to distinguish between the elements of each abstract group and the
actual transformations of Cantor space.  The thrust of our work is to
demonstrate that the two abstract groups are isomorphic to~$V$ and
that under the isomorphisms these three elements
$s_{\alpha,\beta}$,~$\swap{\alpha}{\beta}$ and~$t_{\alpha,\beta}$
correspond.  The first two of our families of relations appearing in
Theorem~\ref{thm:infpres} reflect that these~$t_{\alpha,\beta}$ act as
transpositions so have order~$2$, commute when their supports are
disjoint, and conjugate in a manner analogous to transpositions in
symmetric groups when their supports intersect appropriately.

Passing from the setting of actions of finite permutation groups on
finite sets to the setting of corresponding actions of infinite groups
on Cantor algebras has further implications for the resulting
presentation.  Namely, due to the self-similar nature of Cantor space,
each generating transposition can be factorised.  To be precise,
each~$t_{\alpha,\beta}$ satisfies what we call a split relation:
$t_{\alpha,\beta} = t_{\alpha0,\beta0} \, t_{\alpha1,\beta1}$.  This
provides the third family of relations that are seen in
Theorem~\ref{thm:infpres}.  They have the consequence that not only is
every element of~$V$ a product of our
transpositions~$t_{\alpha,\beta}$, but also we can re-express any such
product as one that involves an even number of transpositions.  Thus
one can simultaneously view R.~Thompson's group~$V$ as an infinite
analogue of both the finite alternating groups and of the finite
symmetric groups.

It follows quite easily from the presentation in
Theorem~\ref{thm:infpres} that any transposition~$t_{\gamma,\delta}$
can be obtained by conjugation using only those~$t_{\alpha,\beta}$
with $\length{\alpha},\length{\beta} \leq 3$, for example, and this
motivates an effort to find a finite presentation involving
permutations and their relations, where these permutations involve
only the nodes in the first three levels of the tree.
Theorem~\ref{thm:main} provides this presentation (involving three
generators and eight relations).  Note here that we depart slightly
from the Coxeter-style of presentation: we exploit the presence of the
symmetric group of degree~$4$ acting upon~$X^{2}$ to reduce further
the presentation, at the cost of employing a ``three-cycle'' as a
generator.  Of note, this human-interpretable presentation is much
smaller than the currently known finite presentation for~$V$ (given by
Thompson~\cite{ThompsonNotes} and discussed in detail in Cannon, Floyd
and Parry's survey~\cite{CFP}), which has four generators and fourteen
relations.

As a technical exercise, we further reduce the presentation in
Theorem~\ref{thm:main} to a $2$\nbd generator and $7$\nbd relation
presentation, found in Theorem~\ref{thm:2gen-KB}.  The resulting
presentation is small, but not so readily interpretable by humans.

Our infinite presentation in Theorem~\ref{thm:infpres} bears
comparison with Dehornoy's infinite presentation for~$V$ (see
\cite[Proposition~3.20]{Dehornoy}).  Dehornoy's presentation
highlights different aspects as to why the group~$V$ can be considered
as a fundamental object in group theory, and even in mathematics,
bearing out, as it does, the connection of~$V$ to systems with
equivalences under associativity and commutativity.  

Our viewpoint of $V$ as a form of a symmetric or alternating group
perhaps hints at why $V$~arose as one of first two known examples of
an infinite simple finitely presented group.  Permuting sets is a
basic activity, and the Cantor algebra represents a fundamental way to
pass from a finite to an infinite context, thus it seems natural that
researchers eventually noticed~$V$.  To give further background, we
note there are many generalisations of~$V$ to infinite simple finitely
presented groups, all of which owe their simplicity to the same
fundamental idea (similar to the reason why the alternating groups are
simple).  One such family is the Higman--Thompson groups~$G_{n,r}$ for
which $V = G_{2,1}$, see~\cite{Higman}.   (The group~$G_{n,r}$ is
simple for $n$~even.  When $n$~is odd, one must pass to the commutator
subgroup of index~$2$, reflecting the observation that the
corresponding split relations in~$G_{n,r}$ do not change the parity of
any decomposition as a product of transpositions.)  Other families
include the Brin--Thompson groups~$nV$ for which $V = 1V$,
see~\cite{Brin-higher}, and the groups~$nV_{m,r}$ that generalise the
previous two families, see~\cite{MPN}, and where we have similar
simplicity considerations, see~\cite{Brin-simplicity}.  The finite
presentability of these groups comes from the much stronger fact that
they are all in fact $\mathrm{F}_{\infty}$ groups.  (There is a
beautiful argument of the $\mathrm{F}_{\infty}$ nature of these groups
given in~\cite{Thumann}, which applies to many of these ``relatives''
of~$V$.  In many specific cases, $\mathrm{F}_{\infty}$ arguments
already exist for individual groups and for classes of groups in these
families.  See, for example, \cite{BelkForrest,Brown,BrownGeog,KMPN}.)
The ideas of this paper ought to apply to all of these groups of
``Thompson type'' in aiding in the discovery of natural and small
presentations.  On the other hand, the infinite family of
finitely-presented infinite simple groups arising from the
Burger-Mozes construction and following related work (see, e.g.,
\cite{BurgMozes97,BurgMozes01a,Rattaggi}) are of an entirely different
nature, and the methods employed here do not seem appropriate to that
context.

We mention here a debt to Matatyu Rubin and Matthew~G. Brin.  Rubin
indicated to Brin a proof of the simplicity of~$V$, which uses the
generation of~$V$ by transpositions with restricted support on Cantor
space.  Brin set this proof out briefly in his
paper~\cite{Brin-higher} and developed the ideas to extend the proof
to the groups $nV$, which he carries out in the short
paper~\cite{Brin-simplicity}.  It is not a stretch to say that the
current article would not exist without that thread of previous
research.

\subsection{A note on content}

The first two sections of this article are intended for the interested
mathematician and provide structure and insights into these sorts of
groups.  The outline of the proofs of the theorems are found towards
the end of Section~\ref{sec:prelims}.  The sections that follow are
more technical and verify the details required for those proofs.

\subsection{Statement of results and some notation}

Let $X = \{ 0,1 \}$.  We write~$X^{\ast}$ for all finite sequences
$x_{1}x_{2}\dots x_{k}$ where $k \geq 0$ and each $x_{i} \in X$.  In
particular, we assume that $X^{\ast}$~contains the empty
word~$\epsilon$.  We view the elements of~$X^{\ast}$ as representing
the nodes on the infinite binary rooted tree with edges between nodes
if they are represented by words which differ by a suffix of
length~$1$.  (Figure~\ref{fig:T2} illustrates this tree together with
the nodes labelled by elements of~$X^{\ast}$.)  Similarly, we give the
standard definition of the Cantor set~$\C$ as~$X^{\omega}$, the set of
all infinite sequences $x_{1}x_{2}x_{3}\dots$ of elements of~$X$ under
the product topology (starting with $X$~endowed with the discrete
topology).  Thus points in~$\C$ correspond to boundary points of the
infinite binary rooted tree.

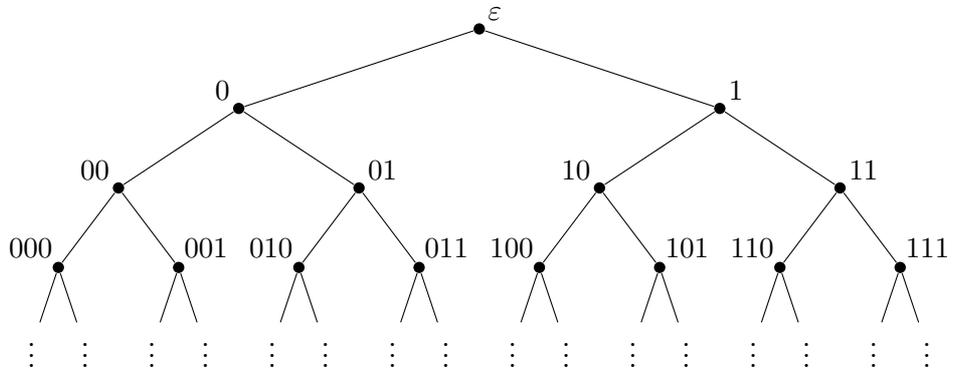
\begin{figure}
  \begin{center}
    \begin{tikzpicture}[
        inner sep=1.5pt,
        level distance=30pt,
        level 1/.style={sibling distance=180pt},
        level 2/.style={sibling distance=90pt},
        level 3/.style={sibling distance=45pt},
        level 4/.style={sibling distance=20pt}
      ]
      \node (root) [circle,fill,label={45:$\epsilon$}] {}
      child {node (0) [circle,fill,label={135:$0$}] {}
        child {node (00) [circle,fill,label={135:$00$}] {}
          child {node (000) [circle,fill,label={100:$000$}] {}
            child {node {$\vdots$}}
            child {node {$\vdots$}}
          }
          child {node (001) [circle,fill,label={80:$001$}] {}
            child {node {$\vdots$}}
            child {node {$\vdots$}}
          }
        }
        child {node (01) [circle,fill,label={45:$01$}] {}
          child {node (010) [circle,fill,label={100:$010$}] {}
            child {node {$\vdots$}}
            child {node {$\vdots$}}
          }
          child {node (011) [circle,fill,label={80:$011$}] {}
            child {node {$\vdots$}}
            child {node {$\vdots$}}
          }
        }
      }
      child {node (1) [circle,fill,label={45:$1$}] {}
        child {node (10) [circle,fill,label={135:$10$}] {}
          child {node (100) [circle,fill,label={100:$100$}] {}
            child {node {$\vdots$}}
            child {node {$\vdots$}}
          }
          child {node (101) [circle,fill,label={80:$101$}] {}
            child {node {$\vdots$}}
            child {node {$\vdots$}}
          }
        }
        child {node (11) [circle,fill,label={45:$11$}] {}
          child {node (110) [circle,fill,label={100:$110$}] {}
            child {node {$\vdots$}}
            child {node {$\vdots$}}
          }
          child {node (111) [circle,fill,label={80:$111$}] {}
            child {node {$\vdots$}}
            child {node {$\vdots$}}
          }
        }
      }
      ;
    \end{tikzpicture}
  \end{center}
  \caption{The infinite binary rooted tree with nodes labelled by
    elements of~$X^{\ast}$}
  \label{fig:T2}
\end{figure}

If $\alpha \in X^{\ast}$ and $\beta \in X^{\ast} \cup X^{\omega}$,
then we write~$\alpha\beta$ for the concatenation of the two
sequences.  We denote by~$\alpha\C$ the set of elements of~$\C$ with
initial prefix~$\alpha$.  This set is a basic open set in the
topology on~$\C$ and is itself homeomorphic to~$\C$.  We shall
write $\alpha \preceq \beta$ to indicate that $\alpha$~is a prefix
of~$\beta$ (including the possibility that the two sequences are
equal).  This notation then means that $\beta = \alpha\gamma$ for some
$\gamma \in X^{\ast} \cup X^{\omega}$.  Moreover, when $\beta \in
X^{\ast}$, then $\alpha$~and~$\beta$ represent nodes on the infinite
binary rooted tree such that $\beta$~lies on a path descending
from~$\alpha$ (see Figure~\ref{fig:relation}(i)), and therefore
$\beta\C \subseteq \alpha\C$.

\begin{figure}
  \begin{center}
    \begin{tikzpicture}[
        baseline={(root.base)},
        inner sep=0pt,
        level 1/.style={sibling distance=60pt},
        level 2/.style={sibling distance=30pt},
        level distance=20pt
      ]
      \node (root) [circle,fill] {}
      child {node (0) [circle,fill] {}
        child {node (00) {$\vdots$}}
        child {node (01) [circle,fill] {}}
      }
      child {node (1) [circle,fill] {}
        child {node {$\vdots$}}
        child {node {$\vdots$}}
      };
      \fill[gray!30] (-2,-4.5) -- (-2.6,-5.55) -- (-1.4,-5.55) -- cycle;
      \filldraw (-1,-3) circle [radius=2pt]
      (-2,-4.5) circle [radius=2pt]
      ;
      \draw (01) .. controls (0,-2.1) and (-0.8,-2.6) .. (-1,-3);
      \draw (-1,-3) .. controls (-1.5,-3.6) and (-2,-4) .. (-2,-4.5);
      \node (A) at (-0.6,-3) {$\alpha$};
      \node (B) at (-1.6,-4.5) {$\beta$};
      \draw (-2,-4.5) -- (-2.4,-5.2);
      \draw (-2,-4.5) -- (-1.6,-5.2);
      \node (bottom) at (-2,-6) {$\underbrace{\qquad\qquad}_{\beta\C}$};
    \end{tikzpicture}
    \hspace*{50pt}
    \begin{tikzpicture}[
        baseline={(root.base)},
        inner sep=0pt,
        level 1/.style={sibling distance=60pt},
        level 2/.style={sibling distance=30pt},
        level distance=20pt
      ]
      \node (root) [circle,fill] {}
      child {node (0) [circle,fill] {}
        child {node (00) {$\vdots$}}
        child {node (01) [circle,fill] {}}
      }
      child {node (1) [circle,fill] {}
        child {node {$\vdots$}}
        child {node {$\vdots$}}
      };
      \filldraw (-2,-4.5) circle [radius=2pt]
      (0,-5) circle [radius=2pt]
      ;
      \draw (01) .. controls (0,-2.1) and (-0.8,-2.6) .. (-1,-3);
      \draw (-1,-3) .. controls (-1.5,-3.6) and (-2,-4) .. (-2,-4.5);
      \draw (-1,-3) .. controls (0,-3.8) and (-.5,-4.3) .. (0,-5);
      \node (A) at (-1.6,-4.5) {$\alpha$};
      \node (B) at (.4,-5) {$\beta$};
    \end{tikzpicture}
  \end{center}
  \caption{(i)~$\alpha \preceq \beta$ (and the paths representing
    elements of~$\beta\C$); and (ii)~$\alpha\perp\beta$}
  \label{fig:relation}
\end{figure}
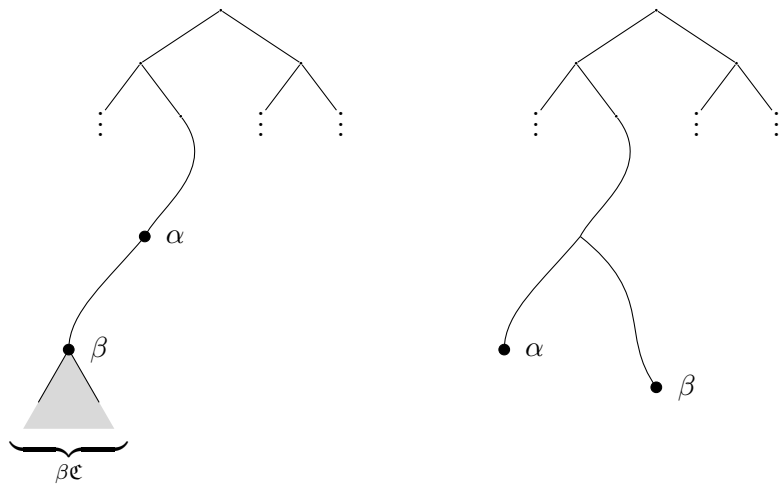

We also write $\alpha \perp \beta$ to denote that both $\alpha
\not\preceq \beta$ and $\beta \not\preceq \alpha$.  Then we shall say
that $\alpha$~and~$\beta$ are \emph{incomparable}.  In this case, the
paths to~$\alpha$ and to~$\beta$ from the root separate at some node
above both $\alpha$~and~$\beta$ (as shown in
Figure~\ref{fig:relation}(ii)), so that $\alpha\C \cap \beta\C =
\emptyset$ (for such $\alpha,\beta \in X^{\ast}$).

If $m$~is a positive integer, we shall also use~$X^{m}$ to denote the
collection of all finite sequences $x_{1}x_{2}\dots x_{m}$ of
\emph{length~$m$} with $x_{i} \in X$ for each~$i$.  We write
$\length{\alpha}$ for the length of the sequence $\alpha \in
X^{\ast}$.

Motivated by the well-known fact (see, for example, \cite{Dehornoy})
that R. Thompson's group $V$ has a partial action on the set of finite
binary rooted trees and equally on the set~$X^{\ast}$, we shall use
the notation~$\gamma \cdot \swap{\alpha}{\beta}$, for
$\alpha,\beta,\gamma \in X^{\ast}$, defined by
\begin{equation}
  \gamma \cdot \swap{\alpha}{\beta} =
  \begin{cases}
    \beta\delta &\text{if $\gamma = \alpha\delta$ for some $\delta \in
      X^{\ast}$}, \\
    \alpha\delta &\text{if $\gamma = \beta\delta$ for some $\delta \in
      X^{\ast}$}, \\
    \gamma &\text{if both $\gamma \perp \alpha$ and $\gamma \perp
      \beta$}, \\
    \text{undefined} &\text{otherwise}.
  \end{cases}
  \label{eq:swap-act}
\end{equation}
Thus $\gamma \cdot \swap{\alpha}{\beta}$ is undefined precisely when
$\gamma \prec \alpha$ or $\gamma \prec \beta$ and when it is defined
it represents a prefix substitution replacing any occurrence
of~$\alpha$ by~$\beta$ and \emph{vice versa}.

The notation appearing in Equation~\eqref{eq:swap-act} above is
motivated via our anticipated isomorphism between~$V$ and the abstract
group defined by the presentation in Theorem~\ref{thm:main}.  The
map~$t_{\alpha,\beta}$ is taken to the element~$\swap{\alpha}{\beta}$
and the above formula reflects the effect of
applying~$t_{\alpha,\beta}$ to a point in the partial action of~$V$
on~$X^{\ast}$.  Note also that we are choosing to write our maps on
the right since in our opinion this enables one to more conveniently
compose a number of maps and such a convention is consistent with
denoting an element of~$V$ by tree-pairs with the domain on the left
and the codomain on the right.  Nevertheless, we still view maps
in~$V$ as being given by prefix substitutions of the infinite
sequences that are elements of the Cantor space so as to be consistent
with other work on these groups.

Our results are as follows:

\begin{thm}
  \label{thm:infpres}
  Let $\A$~to be the set of all symbols~$s_{\alpha,\beta}$ where
  $\alpha,\beta \in X^{\ast}$ with $\alpha \perp \beta$.  Then
  R.~Thompson's group~$V$ has an infinite presentation with generating
  set~$\A$ and relations
  \begin{equation}
  \begin{aligned}
    s_{\alpha,\beta}^{2} &= 1 \\*
    s_{\gamma,\delta}^{-1} \; s_{\alpha,\beta} \; s_{\gamma,\delta} &=
    s_{\alpha\cdot\swap{\gamma}{\delta},
      \beta\cdot\swap{\gamma}{\delta}} \\*
    s_{\alpha,\beta} &= s_{\alpha0,\beta0} \, s_{\alpha1,\beta1}
  \end{aligned}
  \label{eq:inf-rels}
  \end{equation}
  where $\alpha$,~$\beta$, $\gamma$ and~$\delta$ range over all
  sequences in~$X^{\ast}$ such that $\alpha \perp \beta$, \ $\gamma
  \perp \delta$, and $\alpha \cdot \swap{\gamma}{\delta}$ and $\beta
  \cdot \swap{\gamma}{\delta}$ are defined.
\end{thm}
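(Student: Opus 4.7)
The plan is to show that the function $s_{\alpha,\beta} \mapsto t_{\alpha,\beta}$ extends to an isomorphism from the abstract group~$G$ defined by the above presentation onto~$V$. First I would verify that the three families of relations hold when interpreted in~$V$ as relations among the transpositions~$t_{\alpha,\beta}$ of~$\C$. The order-two relation is immediate from the definition; the conjugation relation reduces to the observation that $t_{\gamma,\delta}^{-1}\, t_{\alpha,\beta}\, t_{\gamma,\delta}$ swaps the images of $\alpha\C$ and $\beta\C$ under $t_{\gamma,\delta}$, which under the standing hypothesis are precisely $(\alpha \cdot \swap{\gamma}{\delta})\C$ and $(\beta \cdot \swap{\gamma}{\delta})\C$; and the splitting relation reflects the decomposition $\alpha\C = \alpha 0\C \sqcup \alpha 1\C$ together with the corresponding decomposition for~$\beta$. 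This yields a group homomorphism $\phi \colon G \to V$.

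Next I would establish surjectivity of~$\phi$. Every element of~$V$ is represented by a tree-pair bijection between two finite partitions of~$\C$ into basic clopen sets, and a straightforward induction on the number of pieces decomposes any $v \in V$ as a product of transpositions~$t_{\alpha,\beta}$; this is the observation, originally pointed out by Rubin and recorded by Brin in~\cite{Brin-higher}, that such transpositions generate~$V$.

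The crux is injectivity. For each $n \geq 0$, let $H_n$ denote the subgroup of~$G$ generated by $\set{s_{\alpha,\beta}}{\alpha,\beta \in X^{n},\ \alpha \neq \beta}$. When $\alpha$, $\beta$, $\gamma$, $\delta$ all have length~$n$, the expressions $\alpha \cdot \swap{\gamma}{\delta}$ and $\beta \cdot \swap{\gamma}{\delta}$ are automatically defined, so the squaring and conjugation relations restricted to these generators form the familiar ``all-transpositions'' presentation of the symmetric group on the $2^n$\nbd element set~$X^n$. This gives a surjection from that symmetric group onto~$H_n$. On the other hand, $\phi(H_n)$ is contained in the subgroup of~$V$ consisting of those elements that permute the blocks of the clopen partition $\set{\alpha\C}{\alpha \in X^n}$, a subgroup itself naturally isomorphic to the symmetric group on~$X^n$, and $\phi|_{H_n}$ is surjective onto this subgroup. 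Composing the two surjections produces a surjective endomorphism of a finite symmetric group, which must be an automorphism, so both surjections are bijective and in particular $\phi|_{H_n}$ is injective.

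Finally, iterated application of the splitting relations rewrites each generator $s_{\alpha,\beta}$ with $\length{\alpha}, \length{\beta} \leq m$ as a product of generators at level~$n$ for every sufficiently large~$n$; hence every element of~$G$ lies in some~$H_n$. Combined with the injectivity of $\phi|_{H_n}$ established above, this yields injectivity of~$\phi$ on~$G$, completing the proof. The main obstacle I anticipate is justifying the ``all-transpositions'' presentation claim for $X^n$: one must show that the usual Coxeter braid and commuting relations among adjacent transpositions are consequences of our squaring and conjugation relations, and then use a counting argument to conclude that the abstract group so presented has order exactly~$(2^n)!$.
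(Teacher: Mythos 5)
Your first three steps are essentially sound: the three families of relations do hold among the maps $t_{\alpha,\beta}$, these maps generate $V$, and your argument that $\phi|_{H_n}$ is injective (surjecting the abstract symmetric group on $X^n$ onto $H_n$ and composing with $\phi$ to get a surjective endomorphism of a finite group) is a legitimate way to control the level\nbd$n$ subgroups, modulo the all\nbd transpositions presentation of $\operatorname{Sym}(X^n)$ that you correctly flag as needing proof. The fatal problem is the final step. The split relation $s_{\alpha,\beta} = s_{\alpha0,\beta0}\,s_{\alpha1,\beta1}$ increases $\length{\alpha}$ and $\length{\beta}$ by one \emph{simultaneously}, so it preserves the difference $\length{\beta}-\length{\alpha}$. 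Consequently a generator such as $s_{1,00}$ can never be rewritten as a product of generators $s_{\gamma,\delta}$ with $\length{\gamma}=\length{\delta}=n$: iterated splitting only yields $s_{1,00}=\prod_{\mu\in X^{k}} s_{1\mu,00\mu}$, where the two subscripts always differ in length by one. Your claim that ``every element of $G$ lies in some $H_n$'' is therefore false, not merely unproven: applying $\phi$, every element of $H_n$ permutes the $2^{n}$ blocks $\alpha\C$ with $\alpha\in X^{n}$, whereas $t_{1,00}$ carries $1\C$ (a union of $2^{n-1}$ such blocks) onto $00\C$ (a union of $2^{n-2}$ of them), so $t_{1,00}\notin\phi(H_n)$ and hence $s_{1,00}\notin H_n$ for any $n$. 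The directed union $\bigcup_n H_n$ is a proper subgroup of $G$ whose image in $V$ is the locally finite, measure\nbd preserving union of the finite symmetric groups $\operatorname{Sym}(X^n)$; injectivity of $\phi$ on each $H_n$ says nothing about elements outside this union, so injectivity of $\phi$ on $G$ does not follow.

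Repairing this requires a genuinely different mechanism for words that mix prefix lengths, and that is where all the work in the paper lies. The paper does not attempt a direct exhaustion or normal\nbd form argument at all: it builds a homomorphism in the opposite direction, from $V$ (via the known four\nbd generator, fourteen\nbd relation Cannon--Floyd--Parry presentation) into a finitely presented group $P_{3}$, shows $P_{3}$ maps onto a four\nbd generator subgroup of $P_{\infty}$ which is in fact all of $P_{\infty}$, and then uses the simplicity of $V$ to see that the composite $V\to P_{3}\to P_{\infty}\to V$ is injective, forcing every map in the chain to be an isomorphism. If you wish to keep a self\nbd contained argument in the spirit of your $H_n$ analysis, you would instead need a normal form for arbitrary elements of $P_{\infty}$ (for instance the tree\nbd pair normal form exploited by Dehornoy) together with a proof that the three families of relations suffice to reduce any word to that form; the level\nbd$n$ symmetric groups alone cannot see the elements of $V$ that change the lengths of prefixes.
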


Our primary finite presentation for the group~$V$ has three generators
$a$,~$b$ and~$c$, but, as mentioned above, it is most naturally
expressed in terms of a ``permutation-like'' notation extending the
transpositions in the infinite presentation.  Our generators $a$,~$b$
and~$c$ then correspond to permutations that we denote
$\swap{00}{01}$, \ $( 01 \; 10 \; 11 )$ and~$\swap{1}{00}$,
respectively, and the relations are similarly expressed in terms of
elements~$\swap{\alpha}{\beta}$ that we define fully in
Section~\ref{sec:prelims}.  This ``human-readable'' presentation is as
follows:

\begin{thm}
  \label{thm:main}
  R.~Thompson's group~$V$ has a finite presentation with three
  generators $\swap{00}{01}$, \ $(01 \; 10 \; 11)$ and~$\swap{1}{00}$
  and eight relations
\begin{enumerate}
  \renewcommand{\theenumi}{\R\arabic{enumi}}
  \renewcommand{\labelenumi}{\theenumi.}

\item
  \label{R:S4}
  $\swap{00}{01}^{2} = (01 \; 10 \; 11)^{3} =
  \bigl( \swap{00}{01} \, (01 \; 10 \; 11) \bigr)^{4} = 1$;

\item
  \label{R:12}
  $\swap{1}{01}^{\swap{1}{00}} = \swap{00}{01}$;

\item
  \label{R:split}
  $\swap{1}{00} = \swap{10}{000} \, \swap{11}{001}$;

\item
  \label{R:[23,23]}
  $[ \swap{00}{010} , \swap{10}{111} ] =
  [ \swap{00}{011} , \swap{10}{111} ] = 1$;

\item
  \label{R:[33,23]}
  $[ \swap{000}{010} , \swap{10}{110} ] = 1$.

\end{enumerate}  
\end{thm}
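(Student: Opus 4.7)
The plan is to prove Theorem~\ref{thm:main} by establishing that the abstract group~$G$ defined by the three generators and eight relations \ref{R:S4}--\ref{R:[33,23]} is isomorphic to~$V$. Interpreting each symbol~$\swap{\alpha}{\beta}$ as the corresponding transposition~$t_{\alpha,\beta}$ of the Cantor algebra, all eight relations are readily verified to hold in~$V$, yielding a surjective homomorphism $\phi\colon G\to V$. To construct an inverse, I would exploit the infinite presentation of Theorem~\ref{thm:infpres}: it suffices to define, inside~$G$, an element $\swap{\alpha}{\beta}$ for every incomparable pair $\alpha,\beta \in X^{\ast}$ and to verify that these elements satisfy the three families of relations listed in~\eqref{eq:inf-rels}.

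I would build the extended family inductively. Relation~\ref{R:S4} presents the symmetric group of degree~$4$ on the generators $\swap{00}{01}$ and $(01\;10\;11)$, so every transposition permuting the level-$2$ word set $X^{2}$ can be obtained by conjugation inside this $S_{4}$-piece. Relation~\ref{R:12} ties the level-$1$ generator $\swap{1}{00}$ to that $S_{4}$-piece via conjugation, while \ref{R:split} resolves~$\swap{1}{00}$ as $\swap{10}{000}\,\swap{11}{001}$, providing depth-$3$ seeds. From these seeds I would recursively define $\swap{\alpha}{\beta}$ for every incomparable pair using two moves: conjugate an already-defined transposition by a suitable element to translate its indices, and apply split relations to pass between depths. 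Showing that this recipe produces a well-defined element is interwoven with checking the conjugation family of relations.

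The main obstacle will be verifying the three infinite families in~$G$. The conjugation family reduces, via the $S_{4}$-piece and the inductive construction, to the disjoint-support commutation laws among the depth-$\leq 3$ transpositions; the content of~\ref{R:[23,23]} and~\ref{R:[33,23]} is precisely to supply those commutations that cannot be obtained by conjugation inside the $S_{4}$-piece alone. The split family requires a bounded verification (extending~\ref{R:split} by conjugation within~$S_{4}$) together with propagation through the conjugation family. Once these finitely many base cases are confirmed, the rest cascades by induction on word length, producing an inverse homomorphism $\psi\colon V\to G$ to~$\phi$, and hence the isomorphism $G\cong V$. The delicate step is the low-depth case analysis showing that~\ref{R:[23,23]} and~\ref{R:[33,23]} genuinely suffice to force every required commutation at depth~$\leq 3$; a computer check in~\GAP\ would provide a useful independent verification of this finite computation.
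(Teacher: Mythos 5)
Your overall frame---a surjection $\phi\colon G\to V$ from checking the eight relations in~$V$, plus an inverse built by realising a known presentation of~$V$ inside~$G$---is sound in outline, but the route you choose hides the entire difficulty in one unsupported sentence. The paper does \emph{not} construct the inverse by mapping the infinite presentation of Theorem~\ref{thm:infpres} into~$G$. Instead, after the depth-$\leq 3$ verifications (your ``delicate low-depth case analysis,'' which occupies all of Section~\ref{sec:level3}), it exhibits four elements $\bar{A},\bar{B},\bar{C},\bar{\pi}_0$ of~$P_3$ satisfying the fourteen Cannon--Floyd--Parry relations \cite{CFP}, a presentation of~$V$ that is \emph{finite}, so only finitely many identities need to be checked; this yields a surjection $V\to P_3$, and injectivity then comes for free from the simplicity of~$V$ once $P_3$ is seen to be nontrivial. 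Your plan instead requires verifying in~$G$ \emph{all} of the infinitely many order, conjugacy and split relations of~$P_\infty$, for swaps $\swap{\alpha}{\beta}$ of arbitrary depth. The claim that this ``cascades by induction on word length'' is precisely the step that is missing: you must (a)~define $\swap{\alpha}{\beta}$ in~$G$ for arbitrary incomparable $\alpha,\beta$ and prove the definition is independent of the chosen conjugating word and of the chosen splitting, and (b)~establish every conjugacy relation $s_{\gamma,\delta}^{-1}s_{\alpha,\beta}s_{\gamma,\delta}=s_{\alpha\cdot\swap{\gamma}{\delta},\beta\cdot\swap{\gamma}{\delta}}$ for long, overlapping $\gamma,\delta$. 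The authors report explicitly (opening of Section~\ref{sec:CFP}) that their attempt at exactly this direct induction produced ``overly long arguments replicating those already found in Section~6 of~\cite{CFP},'' which is why they route through the known finite presentation instead. Without at least a precise inductive scheme (what is the induction parameter, which depth-$n$ relations are needed to derive the depth-$(n+1)$ ones, and why the well-definedness issues resolve), your plan is not a proof.

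Two further points. First, there is a circularity hazard: in the paper Theorem~\ref{thm:infpres} is itself deduced from Theorem~\ref{thm:main} via the hexagon of maps in~\eqref{eq:hexagon}, so you cannot quote it as known; you would need to first give an independent proof that $P_\infty\cong V$ (the paper's Remark~(i) indicates this is doable, e.g.\ via Dehornoy's relations, but it must be done). Second, a \GAP\ computation cannot substitute for the depth-$\leq 3$ verification in a proof that these eight relations present an \emph{infinite} group: coset enumeration will not terminate, and what actually has to be shown is a specific finite list of identities being consequences of \ref{R:S4}--\ref{R:[33,23]} in the abstract group, which is a derivation, not a mechanical check of a finite quotient. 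If you want a workable finite endgame, adopt the paper's device: verify a known finite presentation of~$V$ inside your~$G$ and then use simplicity of~$V$ to upgrade the resulting surjection $V\to G$ to an isomorphism.
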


We shall provide words in terms of the generators $a$,~$b$ and~$c$ to
express these relations later in Equation~\eqref{eq:R-words}.
Observe that the
Relations~\ref{R:S4} tells us that $\swap{00}{01}$ and $(01 \;
10 \; 11)$ satisfy the relations of the symmetric group~$S_{4}$, so
the subgroup that they generate is isomorphic to some quotient
of~$S_{4}$.  In fact, it will turn out that this subgroup \emph{is}
isomorphic to~$S_{4}$.

The element~$\swap{\alpha}{\beta}$ will correspond to the element of
Thompson's group $V$ that maps a point of the Cantor set that has
prefix~$\alpha$ to a point with prefix~$\beta$ and \emph{vice versa}.
Relations~\ref{R:[23,23]} and~\ref{R:[33,23]}
then reflect the fact that certain elements of~$V$ commute because
they have disjoint support.

\begin{figure}[b]
  \[
  \begin{tikzpicture}[
      inner sep=0pt,
      baseline=-30pt,
      level distance=20pt,
      level 1/.style={sibling distance=30pt},
      level 2/.style={sibling distance=15pt}
    ]
    \node (root) [circle,fill] {}
    child {node (0) [circle,fill] {}
      child {node (00) {$1$}}
      child {node (01) {$2$}}}
    child {node (1) [circle,fill] {}
      child {node (10) {$3$}}
      child {node (11) [circle,fill] {}
        child {node (110) {$4$}}
        child {node (111) {$5$}}}};
  \end{tikzpicture}
  \xrightarrow{\;\;u\;\;}
  \begin{tikzpicture}[
      inner sep=0pt,
      baseline=-30pt,
      level distance=20pt,
      level 1/.style={sibling distance=30pt},
      level 2/.style={sibling distance=15pt}
    ]
    \node (root) [circle,fill] {}
    child {node (0) [circle,fill] {}
      child {node (00) {$2$}}
      child {node (01) {$1$}}}
    child {node (1) [circle,fill] {}
      child {node (10) {$5$}}
      child {node (11) [circle,fill] {}
        child {node (110) {$3$}}
        child {node (111) {$4$}}}};
  \end{tikzpicture}
  \qquad\qquad\qquad
  \begin{tikzpicture}[
      inner sep=0pt,
      baseline=-30pt,
      level distance=20pt,
      level 1/.style={sibling distance=30pt},
      level 2/.style={sibling distance=15pt}
    ]
    \node (root) [circle,fill] {}
    child {node (0) [circle,fill] {}
      child {node (00) {$1$}}
      child {node (01) {$2$}}}
    child {node (1) [circle,fill] {}
      child {node (10) {$3$}}
      child {node (11) {$4$}}};
  \end{tikzpicture}
  \xrightarrow{\;\;v\;\;}
  \begin{tikzpicture}[
      inner sep=0pt,
      baseline=-30pt,
      level distance=20pt,
      level 1/.style={sibling distance=30pt},
      level 2/.style={sibling distance=15pt}
    ]
    \node (root) [circle,fill] {}
    child {node (0) [circle,fill] {}
      child {node (00) {$1$}}
      child {node (01) {$4$}}}
    child {node (1) [circle,fill] {}
      child {node (10) {$2$}}
      child {node (11) {$3$}}};
  \end{tikzpicture}  
  \]
  \caption{Two elements given by tree-pairs that generate for
    R.~Thompson's group~$V$}
  \label{fig:2gens}
\end{figure}
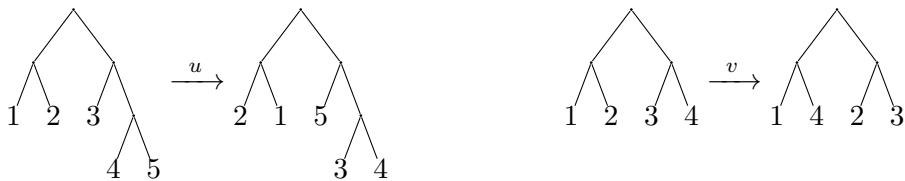

We show that $V$~is generated by the two elements $u$~and~$v$
described by the tree-pairs in Figure~\ref{fig:2gens}.  Transforming
the presentation in Theorem~\ref{thm:main} to one using these two
generators via Tietze transformations (as described in
Corollary~\ref{cor:2gen-Tietze}) and reducing the nine resulting
relations using the Knuth--Bendix algorithm, as implemented in the
\GAP\ package KBMAG~\cite{GAP,KBMAG}, results in the surprising two
generator and seven relation presentation given below:

\begin{thm}
  \label{thm:2gen-KB}
  R.~Thompson's group~$V$ has a finite presentation with two
  generators $u$~and~$v$ and the seven relators
  \begin{align*}
  &u^6, \;\; v^3, \;\; (u^{3}v)^4, \\
  &v^{-1}u(u^{2}v^{-1})^{2}u^{3}vu^{-1}v^{-1}u^{3}vu(uvu^{2}(uv^{-1}u^{3}v)^{3})^{2}uv^{-1}u^{3}v^{-1}, \\
  &uv^{-1}u^{3}v^{-1}u^{-2}v^{-1}uvu^{2}v^{-1}u^{-1}vu^{2}v^{-1}uvu^{-1}(u^{-1}v^{-1})^{2}u^{3}vu^{-1}, \\
  &v(uv^{-1}u^{3}v^{-1})^{2}u^{-1}v^{-1}u^{3}v^{-1}u^{-1}v^{-1}u^{3}v, \\
    &uvu^{3}vuv^{-1}u^{-2}v^{-1}u(u^{2}v)^{2}(u^{2}v^{-1})^{2}u^{3}vu^{-2}v^{-1}u^{3}v
    .
  \end{align*}
\end{thm}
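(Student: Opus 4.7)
My plan is to leverage Theorem~\ref{thm:main} as the starting point and proceed by Tietze transformations followed by a verified Knuth--Bendix simplification, exactly as the introductory text indicates. I would split the argument into three stages.

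Stage one is to show that the pair $\{u,v\}$ of Figure~\ref{fig:2gens} generates~$V$. Concretely, I would exhibit explicit words $w_a(u,v)$, $w_b(u,v)$, $w_c(u,v)$ in the free group on $\{u,v\}$ whose images in $V$ coincide with the three generators $\swap{00}{01}$, $(01\;10\;11)$, and $\swap{1}{00}$ of Theorem~\ref{thm:main}. This is a finite verification carried out by reading off the tree-pair compositions: $u$ has order dividing~$6$ by inspection (the two trees have equal complexity and the permutation portion has orbit structure $(1\,2)(3\,5\,4)$), and $v$ is a $3$-cycle in the standard sense of the action, which already suggests how the three Theorem~\ref{thm:main} generators embed into $\langle u,v\rangle$.

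Stage two is the Tietze transformation. Adjoin $u$ and $v$ as new generators via the defining relations $u = w_u(a,b,c)$ and $v = w_v(a,b,c)$; then, using the formulas from stage one, eliminate $a$, $b$, $c$ from the presentation of Theorem~\ref{thm:main} in favour of $u$ and~$v$. Each of the eight relations~\ref{R:S4}--\ref{R:[33,23]} rewrites to a relator in~$u,v$, and together with one surviving "linking" relation this produces the $2$-generator, $9$-relator presentation referred to as Corollary~\ref{cor:2gen-Tietze}. None of this stage involves any group-theoretic creativity; it is pure rewriting in the free group.

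Stage three, which is the real obstacle, is the reduction from nine relators to the seven relators listed in the theorem. I would verify this in two independent directions. First, each of the seven listed relators is a product of conjugates of the nine Tietze relators: this direction can be checked by running the Knuth--Bendix procedure in KBMAG on the $9$-relator presentation with an appropriate shortlex ordering on $\{u,u^{-1},v,v^{-1}\}$ and confirming that the system reduces to the seven relators stated (modulo obvious consequences such as $u^{-6}$, $v^{-3}$). Second, the seven stated relators all hold in~$V$: this is a direct computation on Cantor space, and more efficiently, each of the seven can be recognised as a consequence of the nine by substituting the words $w_a,w_b,w_c$ and reducing via the relations~\ref{R:S4}--\ref{R:[33,23]} of Theorem~\ref{thm:main}. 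The two directions together establish that the normal closures of the two sets of relators coincide, so that the seven-relator presentation defines the same group as the nine-relator one, namely~$V$.

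The hard part is unambiguously stage three: the seven relators are long and opaque, and there is no humanly transparent derivation of them from the Tietze output. My proposal is therefore to present the Knuth--Bendix run as the certificate in one direction, and the substitution-and-reduction calculation (using the already-proved Theorem~\ref{thm:main}) as the certificate in the other. Once both certificates are in place, the theorem follows immediately from standard presentation theory; no new group-theoretic input beyond Theorem~\ref{thm:main} and the generation claim of stage one is required.
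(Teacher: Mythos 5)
Your overall architecture --- generate $V$ by the two tree-pair elements, pass to a $2$\nbd generator, $9$\nbd relator presentation by Tietze transformations (Corollary~\ref{cor:2gen-Tietze}), then discard two relators via a Knuth--Bendix computation --- is exactly the route the paper takes, and your stages one and two match the paper's argument (the paper records the explicit words $a=u^{3}$, $b=v$, $c=(u^{3})^{vu^{-2}vu^{3}}(u^{3})^{vu^{-1}vu^{3}v}$ and $u=a(a^{b}a^{b^{-1}})^{caca^{b}a^{b^{-1}a}}$, $v=b$). The problem is the logic of your stage three. Write $N_{9}$ and $N_{7}$ for the normal closures in the free group on $\{u,v\}$ of the nine Tietze relators and of the seven listed relators respectively, so that the free group modulo $N_{9}$ is $V$ and the goal is $N_{7}=N_{9}$. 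Both of your ``two independent directions'' establish the \emph{same} containment $N_{7}\subseteq N_{9}$: saying that each of the seven relators is a product of conjugates of the nine is exactly the statement that each of the seven relators holds in $V$, which is your second certificate restated. From $N_{7}\subseteq N_{9}$ alone you get only a surjection from $\langle u,v\mid \text{seven relators}\rangle$ onto $V$; the presented group could a priori be strictly larger, and the simplicity of $V$ does not rescue you, since a group with a proper normal subgroup can still have $V$ as a quotient.

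The containment you actually need, and do not address, is $N_{9}\subseteq N_{7}$: the two discarded relators (indeed all nine) must be shown to lie in the normal closure of the seven that remain. This is precisely what the paper's reduction procedure delivers. For each relator $r_{i}$ one builds a (sound, though not confluent) rewriting system from the \emph{other} relators and uses it to reduce $r_{i}$ to a shorter word that is Tietze-equivalent to it modulo those others; since every rewrite applied is a consequence of the remaining relators, each such replacement preserves the normal closure of the whole set, and a relator may be deleted only when it reduces to the empty word modulo the others. Your Knuth--Bendix certificate should therefore be run the other way round: exhibit each of the two omitted relators as reducing to the identity in the group presented by the seven retained ones (equivalently, record that every replacement step in passing from nine relators to seven is normal-closure-preserving). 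With that single correction your plan coincides with the paper's proof.
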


This reduction to seven relations caught the authors by surprise,
but perhaps it is not so unexpected in view of the deficiency (as
defined, for example, in~\cite[\S14.1]{Robinson}) of the presentations
in Theorems~\ref{thm:main} and~\ref{thm:2gen-KB} both being~$-5$.

\section{Further preliminaries and the proofs of the main theorems}
\label{sec:prelims}

This section contains the heart of the mathematics within the article.
We present all the remaining preliminaries required to fully
understand the statements of the theorems listed in the introduction,
in particular, unpacking the presentations that we use.  We then
provide the proofs, subject to deferring technical calculations to the
sections that follow.

\subsection{\boldmath R.~Thompson's group~$V$}

One view of Thompson's group~$V$ is as a group of certain
homeomorphisms of the Cantor set~$\C$, namely those that are finite
products of the elements~$t_{\alpha,\beta}$, for $\alpha,\beta \in
X^{\ast}$ with $\alpha \perp \beta$, defined as follows
\begin{equation}
  x t_{\alpha,\beta} = \begin{cases}
    \beta y &\text{if $x = \alpha y$ for some $y \in \C$}; \\
    \alpha y &\text{if $x = \beta y$ for some $y \in \C$}; \\
    x &\text{otherwise}
  \end{cases}
  \label{eq:swapmap}
\end{equation}
(see Brin~\cite[Lemma~12.2]{Brin-higher}).  Note that the
map~$t_{\alpha,\beta}$ has the effect of swapping those elements
of~$\C$ that have an initial prefix~$\alpha$ with those that have an
initial prefix~$\beta$ and fixing all other points in~$\C$.  A general
element of~$V$ is often denoted by a pair of finite binary rooted
trees representing the domain and codomain of the map.  We label the
leaves of these two trees by the numbers $1$,~$2$, \dots,~$n$ (for
some~$n$) and this then specifies that our element of~$V$ has the
effect of substituting the prefix from~$X^{\ast}$ corresponding to the
node in the domain tree labelled~$i$ by the member of~$X^{\ast}$
corresponding to the node in the codomain tree with the same label
(for each~$i$).  For example, Figure~\ref{fig:t-example} provides such
tree-pairs for the map~$t_{100,11}$ as just defined.

\begin{figure}
  \[
  \begin{tikzpicture}[
      inner sep=0pt,
      baseline=-30pt,
      level distance=20pt
    ]
    \node (root) [circle,fill] {}
    child {node (0) {$1$}}
    child {node (1) [circle,fill] {}
      child {node (10) [circle,fill] {}
        child {node (100) {$2$}}
        child {node (101) {$3$}}}
      child {node (11) {$4$}}};
  \end{tikzpicture}
  \quad
  \xrightarrow{\qquad}
  \quad
  \begin{tikzpicture}[
      inner sep=0pt,
      baseline=-30pt,
      level distance=20pt
    ]
    \node (root) [circle,fill] {}
    child {node (0) {$1$}}
    child {node (1) [circle,fill] {}
      child {node (10) [circle,fill] {}
        child {node (100) {$4$}}
        child {node (101) {$3$}}}
      child {node (11) {$2$}}};
  \end{tikzpicture}
  \]
  \caption{The map~$t_{100,11}$ denoted using tree-pairs}
  \label{fig:t-example}
\end{figure}
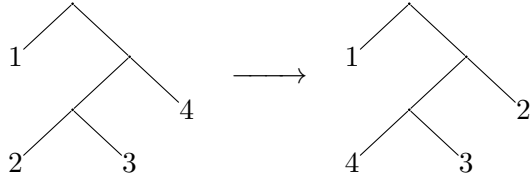

From the definition, it is visible that $t_{\alpha,\beta}^{2} = 1$.
Equations of this type (as $\alpha$~and~$\beta$ range over all
incomparable pairs from~$X^{\ast}$) will form our family of
\emph{order relations}.

If we shift our attention to conjugation, it is a straightforward
calculation in~$V$, along the lines of the familiar one concerning
conjugation of permutations demonstrated to undergraduates in a first
course on group theory, to verify that
\[
t_{\gamma,\delta}^{-1} \, t_{\alpha,\beta} \, t_{\gamma,\delta} =
t_{\alpha\cdot\swap{\gamma}{\delta}, \beta\cdot\swap{\gamma}{\delta}}
\]
whenever $\alpha\cdot\swap{\gamma}{\delta}$ and
$\beta\cdot\swap{\gamma}{\delta}$ are both defined.  We call this
resultant family of relations in~$V$ our \emph{conjugacy relations}.
At this point, we also note that we will use an exponential notation
for conjugation, so the left-hand side of the above relation will also
be denoted by~${t_{\alpha,\beta}}^{t_{\gamma,\delta}}$ in what follows.

Our final family of relations exploit the action of our
elements~$t_{\alpha,\beta}$ on basic open sets of the Cantor set~$\C$
and the self-similar structure of this space.  Specifically, if
$\alpha \in X^{\ast}$, then the basic open set~$\alpha\C$ splits into
two subsets, namely the set of all elements of~$\C$ with initial
prefix~$\alpha0$ and those with initial prefix~$\alpha1$.  In view of
this, we obtain the equation
\begin{equation}
  t_{\alpha,\beta} = t_{\alpha0,\beta0} \, t_{\alpha1,\beta1}
  \label{eq:t-split}
\end{equation}
when $\alpha,\beta \in X^{\ast}$ with $\alpha \perp \beta$.  We refer
to the family of these relations as \emph{split relations}.

\subsection{\boldmath Deriving the presentations for~$V$}

One of the presentations that we use in this article is that found by
R. Thompson and discussed in Cannon--Floyd--Parry
(see~\cite[Lemma~6.1]{CFP}).  This presentation has four generators
$A$,~$B$, $C$ and~$\pi_{0}$ and fourteen relations.  We state these
relations when we need them at the start of Section~\ref{sec:CFP}
below.

As described in Theorem~\ref{thm:infpres}, the first of our new
presentations involves the order relations, conjugacy relations and
split relations of~$V$ just described.  To be precise, we define
$P_{\infty}$~to be the group having the infinite presentation with
generating set $\A = \set{s_{\alpha,\beta}}{\alpha,\beta \in X^{\ast},
  \; \alpha \perp \beta}$ and the relations listed in
Equation~\eqref{eq:inf-rels}.  Of course, we already know that that
$V$~is generated by the maps~$t_{\alpha,\beta}$ and that these satisfy
the order, conjugacy and split relations.  This ensures that there is
a surjective homomorphism $\phi \colon P_{\infty} \to V$ given by
$s_{\alpha,\beta} \mapsto t_{\alpha,\beta}$ for $\alpha,\beta \in
X^{\ast}$ with $\alpha \perp \beta$.  When establishing
Theorem~\ref{thm:infpres}, we shall be observing that $\phi$~is
actually an isomorphism.

\spc

We now describe our primary finite presentation for~$V$, which has
three generators $a$,~$b$ and~$c$ and eight relations but, more
importantly, can be readily understood by a human.  The majority of
our calculations will take place with this presentation and so, as
indicated above, we develop a helpful notation that is parallel to
that used in finite permutation groups.  It is a consequence of
Higman~\cite{Higman} that all the relations that hold in~$V$ can be
detected as consequences of products using tree-pairs of some bounded
size.  This motivates our presentation which employs essentially a
finite subcollection of the order, conjugacy and split relations
from~$P_{\infty}$ and involving only some swaps~$\swap{\alpha}{\beta}$
all satisfying $\length{\alpha},\length{\beta} \leq 3$.  (To aid
reducing the number of relations required we encode a copy of the
symmetric group of degree~$4$, corresponding to acting on~$X^{2}$,
within our presentation.)

Accordingly, the three generators $a$,~$b$ and~$c$ of the
group~$P_{3}$ that we define will represent cyclic permutations of
basic open sets.  As stated above, we shall write $\swap{00}{01}$,
$(01 \; 10 \; 11)$ and~$\swap{1}{00}$ for $a$,~$b$ and~$c$,
respectively.  This reflects the fact that, under the isomorphism that
we shall establish between $P_{3}$~and~$V$, the element~$a$
corresponds to the map~$t_{00,01}$ that interchanges the basic open
sets $00\C$ and~$01\C$, \ $b$~corresponds to the product~$t_{01,10}\,
t_{01,11}$ (inducing a $3$\nbd cycle of the sets $01\C$,~$10\C$
and~$11\C$), and $c$~corresponds to~$t_{1,00}$.  We extend this
notation by defining elements~$\swap{\alpha}{\beta}$ that we will
refer to as \emph{swaps} below and a formula for each in terms of
$a$,~$b$ and~$c$ will be extracted from the definitions we make.  The
element~$\swap{\alpha}{\beta}$ will correspond to~$t_{\alpha,\beta}$
under the isomorphism.  It is these swaps that appear in our list of
relations found in Theorem~\ref{thm:main} above.

Once we have defined the swaps below, we can translate the
Relations~\ref{R:S4}--\ref{R:[33,23]} into words in $a$,~$b$ and~$c$.
The result is the following restatement of Theorem~\ref{thm:main}:

\begin{thm}
  \label{thm:mainWithWords}
  R.~Thompson's group~$V$ has a finite presentation with three
  generators $a$,~$b$ and~$c$ and the following eight relations:
\begin{equation}
  \begin{aligned}
    \begin{gathered}
      a^{2} = b^{3} = (ab)^{4} = 1, \\
      c = a^{bcacaa^{ba}} \, a^{b^{-1}cacaa^{b^{-1}a}}, \\
      [ a^{b^{-1}cac} , a^{b^{-1}caca^{b}a^{b^{-1}a}} ] = 1,
    \end{gathered}
    \qquad\qquad
    &\begin{gathered}
       c^{ac} = a, \\
       [ a^{bcac} , a^{b^{-1}caca^{b}a^{b^{-1}a}} ] = 1, \\
       [ a^{bca^{bca}} , a^{bcaca^{b}a^{b^{-1}a}} ] = 1.
     \end{gathered}
  \end{aligned}
  \label{eq:R-words}
\end{equation}
\end{thm}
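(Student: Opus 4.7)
The strategy is straightforward translation: given Theorem~\ref{thm:main}, I would replace every swap $\swap{\alpha}{\beta}$ appearing in Relations~\ref{R:S4}--\ref{R:[33,23]} by an explicit word in the generators $a = \swap{00}{01}$, $b = (01\;10\;11)$, $c = \swap{1}{00}$, and then check that the resulting strings match verbatim with the eight relations listed in~\eqref{eq:R-words}. In other words, Theorem~\ref{thm:mainWithWords} is a restatement of Theorem~\ref{thm:main} once the swap notation has been encoded in the free generators.

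To carry this out, I would first record the word expressions for each of the small-depth swaps that appear in Relations~\ref{R:12}--\ref{R:[33,23]}. All such swaps involve prefixes of length at most~$3$, so each can be obtained as a conjugate of one of the base elements $a$~or~$c$ by a short product built from $a$,~$b$,~$c$ (exploiting that $\langle a, b\rangle$ acts on $X^{2}$ as~$S_{4}$ and that $c$ allows one to move between levels of the tree). For example, the conjugacy relation applied to $c = \swap{1}{00}$ and $a = \swap{00}{01}$ yields $\swap{1}{01} = c^{a} = aca$ directly; iterating with further conjugations produces expressions for the swaps $\swap{10}{000}$, $\swap{11}{001}$, $\swap{00}{010}$, $\swap{10}{111}$, $\swap{00}{011}$, $\swap{000}{010}$, $\swap{10}{110}$ as nested conjugates of $a$.

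With these expressions in hand, I would substitute into each of Relations~\ref{R:S4}--\ref{R:[33,23]} and compare with~\eqref{eq:R-words}. Relation~\ref{R:S4} translates trivially to $a^{2} = b^{3} = (ab)^{4} = 1$. Relation~\ref{R:12} becomes $(c^{a})^{c} = a$, i.e., $c^{ac} = a$. Relation~\ref{R:split} unfolds, after replacing $\swap{10}{000}$ and $\swap{11}{001}$ by their conjugate words, into the product $c = a^{bcacaa^{ba}} \, a^{b^{-1}cacaa^{b^{-1}a}}$. Relations~\ref{R:[23,23]} and~\ref{R:[33,23]} similarly translate into the three commutator identities of~\eqref{eq:R-words}.

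The main difficulty is purely organizational: the nested conjugates grow quickly, and one must choose a consistent conjugating scheme so that the words produced match those in~\eqref{eq:R-words} character for character. There is no conceptual obstruction, however; once Theorem~\ref{thm:main} is established, Theorem~\ref{thm:mainWithWords} follows by the transcription above, and one can defer the full catalogue of word expressions for the individual swaps to the sections where the swap notation is developed systematically.
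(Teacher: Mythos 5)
Your proposal is correct and matches the paper's approach exactly: the paper defines each swap $\swap{\alpha}{\beta}$ with $\length{\alpha},\length{\beta}\leq 3$ as an explicit word in $a$, $b$, $c$ via the scheme in Equations~\eqref{D:22}--\eqref{D:other33} (each swap being a nested conjugate of $a$ or $c$), and then observes that substituting these words into Relations~\ref{R:S4}--\ref{R:[33,23]} yields the list~\eqref{eq:R-words} verbatim, so that Theorem~\ref{thm:mainWithWords} is a direct transcription of Theorem~\ref{thm:main}. Nothing further is needed.
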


The careful reader will doubtless have observed that the eighth
relation can be shortened by conjugating by~$a$.  The presentation as
listed is a direct consequence of the interpretation of
Theorem~\ref{thm:main} in terms of $a$,~$b$ and~$c$.  No effort has
been made in its statement to reduce the length of the relations.

Having found this nice presentation for~$V$, we felt obligated to
reduce the relations employing the available technology, specifically the
Knuth--Bendix Algorithm.  This algorithm shortens the above relations,
although the results are no longer particularly transparent.  To carry
out these reductions, we used the implementation of the algorithm
found in the freely available KBMAG package~\cite{KBMAG} in
\GAP~\cite{GAP} in the following way.  Denote the eight relators
corresponding to the equations in~\eqref{eq:R-words} by
$r_{1}$,~$r_{2}$, \dots,~$r_{8}$.  We can construct a rewriting system
associated to each of the groups $Q_{i} = \langle \, a,b,c \mid
r_{1},r_{2},\dots,r_{i-1},r_{i+1},\dots,r_{8} \, \rangle$ for $i =
4$,~$5$, \dots,~$8$ in sequence.  The systems that KBMAG constructs
are not confluent, but nevertheless enable us to replace each~$r_{i}$
by a Tietze-equivalent (in the group~$Q_{i}$) shorter relation.  This
process is repeated until the resulting relations stabilise.  As a
consequence, the normal closure, in the free group on~$\{a,b,c\}$, of
the following eight relations is identical to that of our original
list:
\begin{equation}
\begin{gathered}
  a^{2} = b^{3} = (ab)^{4} = 1, \\
  c^{-1}(ac)^{2}a = 1, \\
  (cab^{-1}aba)^{2}cb(cabab^{-1}a)^{2} = 1, \\
  a(cb)^{2}a(b^{-1}c)^{2}bcabcb^{-1}cab^{-1}acb^{-1}(cb)^{2}ab^{-1} =
  1, \\
  ab^{-1}cbc(ab^{-1})^{2}cbcb^{-1}a(b^{-1}c)^{2}babcb^{-1}cab^{-1} =
  1, \\
  ca(b^{-1}c)^{2}bacabacbc(b^{-1}ca)^{2}b(cb^{-1})^{2}(acb)^{2}cb^{-1}cab^{-1}
  = 1.
\end{gathered}
\label{eq:KB-words}
\end{equation}
We observe this mechanical process produces considerably shorter
relations than our original eight in Equation~\eqref{eq:R-words}.

The presentation in Theorem~\ref{thm:2gen-KB} is deduced in a manner
that similarly depends upon the use of the Knuth--Bendix Algorithm.
One first applies Tietze transformations to pass to a $2$\nbd
generator presentation employing generators $u$~and~$v$ and relations
deduced from the list~\eqref{eq:KB-words}.  We shall describe these
Tietze transformations by expressing $u$~and~$v$ in terms of $a$,~$b$
and~$c$ (adding extraneous generators), and expressing $a$,~$b$
and~$c$ in terms of $u$~and~$v$ (removing extraneous generators).  The
relevant formulae are
\[
u = a(a^ba^{b^{-1}})^{caca^{b}a^{b^{-1}a}}
\qquad \text{and}\qquad
v = b
\]
and
\[
a = u^{3}, \qquad
b = v, \qquad
c = (u^{3})^{vu^{-2}vu^{3}} \, (u^{3})^{vu^{-1}vu^{3}v}.
\]
These formulae can be deduced by direct calculation in~$V$.  This
application of Tietze transformations is expanded upon a little in
Section~\ref{sec:finaldetails} and Corollary~\ref{cor:2gen-Tietze}
provides the intermediate step to the theorem.  (This corollary is
established by purely theoretical means and does not rely upon
computer calculation.)

We then employ the same relation reduction process using KBMAG as
described earlier and this shows that two of the nine relations
resulting from the Tietze transformations are extraneous.  In this
manner we have deduced Theorem~\ref{thm:2gen-KB} from
Theorem~\ref{thm:main}.

\spc

We now proceed to formally define the swaps~$\swap{\alpha}{\beta}$ for
$\alpha,\beta \in X^{\ast}$ with $\alpha \perp \beta$ and
$\length{\alpha},\length{\beta} \leq 3$ in terms of our generators
$a$,~$b$ and~$c$ in order to present the group~$P_{3}$.  To start off,
we define swaps~$\swap{\alpha}{\beta}$ for $\alpha,\beta \in X^{2}$ as
follows:
\begin{equation}
  \begin{aligned}
    \swap{00}{01} &= a,
    &\swap{00}{10} &= a^{b},
    &\swap{00}{11} &= a^{b^{-1}} \\
    \swap{01}{10} &= a^{ba},
    &\swap{01}{11} &= a^{b^{-1}a},
    &\swap{10}{11} &= a^{bab}
  \end{aligned}
  \label{D:22}
\end{equation}
Here, and in all that follows, we shall also adopt the convention that
the swap~$\swap{\beta}{\alpha}$ coincides with~$\swap{\alpha}{\beta}$
whenever the latter has already been defined.  We write $\Sw{2}$~for
the set of swaps~$\swap{\alpha}{\beta}$ with $\alpha,\beta \in X^{2}$.

The swaps in~$\Sw{2}$ and their effect when conjugating will be of
fundamental importance in our calculations.  Accordingly we spend a
little time expanding upon the above definitions before we define the
remaining swaps.  In the Relations~\ref{R:S4}, we have assumed that
$a$~and~$b$ satisfy the relations of the symmetric group~$S_{4}$ and
the formulae on the right-hand side of Equation~\eqref{D:22} are those
that correspond to transpositions in~$S_{4}$.  Consequently, when we
multiply and conjugate elements of~$\Sw{2}$, they behave in exactly
the same way as transpositions do.  In particular, we can view
individual elements of~$\Sw{2}$, and, by extension, products of such
swaps, as transformations of the set~$X^{2}$.  Indeed, our
notation~$\gamma \cdot \swap{\alpha}{\beta}$, as defined in
Equation~\eqref{eq:swap-act}, when $\alpha,\beta,\gamma \in X^{2}$, is
precisely the formulae for these maps.  Our assumption of
Relations~\ref{R:S4} justifies our using products of swaps
from~$\Sw{2}$ as maps $X^{2} \to X^{2}$, as we shall do explicitly,
for example, in Lemma~\ref{lem:23^2} below.  Similarly, we have
written $( 01 \; 10 \; 11 )$ for the generator~$b$,
since it follows from the Relations~\ref{R:S4} that $b$~is equal to
the product~$\swap{01}{10} \, \swap{01}{11}$, which induces a $3$\nbd
cycle on~$X^{2}$.

To define the remaining swaps~$\swap{\alpha}{\beta}$, where
$\length{\alpha},\length{\beta} \leq 3$, we need one further piece of
notation.  If $x \in X$, we define~$\bar{x}$ to be the other element
in~$X$; that is,
\[
\bar{x} = \begin{cases}
  1 &\text{if $x = 0$}, \\
  0 &\text{if $x = 1$}.
\end{cases}
\]
Then for any $x,y,z \in X$, we make our definitions in the following
order:
\begin{gather}
  \swap{0}{1} = \swap{00}{10} \, \swap{01}{11};
  \label{D:11} \\[10pt]
  \begin{aligned}
    \swap{1}{00} &= c,
    &\swap{1}{01} &= \swap{1}{00}^{\swap{00}{01}}, \\
    \swap{0}{1x} &= \swap{1}{0x}^{\swap{0}{1}}; \qquad\qquad
  \end{aligned} \label{D:12} \\[10pt]
  \begin{aligned}
    \swap{1}{00x} &= \swap{00}{1x}^{\swap{1}{00}}, \qquad\qquad
    &\swap{1}{01x} &= \swap{1}{00x}^{\swap{00}{01}}, \\
    \swap{0}{1xy} &= \swap{1}{0xy}^{\swap{0}{1}};
  \end{aligned} \label{D:13} \\[10pt]
  \begin{aligned}
    \swap{00}{01x} &= \swap{1}{01x}^{\swap{1}{00}},
    &\swap{01}{00x} &= \swap{00}{01x}^{\swap{00}{01}}, \\
    \swap{1x}{1\bar{x}y} &= \swap{0x}{0\bar{x}y}^{\swap{0}{1}},
    &\swap{1x}{0yz} &= \swap{0\bar{y}}{0yz}^{\swap{0\bar{y}}{1x}}, \\
    \swap{0x}{1yz} &= \swap{1x}{0yz}^{\swap{0}{1}}; \qquad\qquad
  \end{aligned} \label{D:23} \\[10pt]
  \begin{aligned}
    \swap{000}{001} &= \swap{1}{000}^{\swap{1}{001}},
    &\swap{000}{010} &= \swap{1}{000}^{\swap{1}{010}}, \\
    \swap{000}{011} &= \swap{1}{000}^{\swap{1}{011}},
    &\swap{001}{011} &= \swap{1}{001}^{\swap{1}{011}}, \\
    \swap{xy0}{xy1} &= \swap{000}{001}^{\swap{00}{xy}}. \qquad\qquad
  \end{aligned} \label{D:33}
\end{gather}
Finally, for distinct $\kappa,\lambda \in X^{2}$, fix a
product~$\rho_{\kappa\lambda}$ of swaps from~$\Sw{2}$ that moves $00$
to~$\kappa$ and $01$ to~$\lambda$ when viewed, as described above, as
a map $X^{2} \to X^{2}$.  Define
\begin{equation}
  \swap{\kappa x}{\lambda y} = \swap{00x}{01y}^{\rho_{\kappa\lambda}}
  \label{D:other33}
\end{equation}
for $(x,y) \in \{ (0,0), (0,1), (1,1) \}$.  In this way, we have now
defined all swaps~$\swap{\alpha}{\beta}$ where $\length{\alpha},
\length{\beta} \leq 3$.

Having made these definitions, it is a straightforward matter to
convert the relations \ref{R:S4}--\ref{R:[33,23]} into the
list~\eqref{eq:R-words} of actual words expressed in the generators
$a$,~$b$ and~$c$, completing the translation of Theorem~\ref{thm:main}
into Theorem~\ref{thm:mainWithWords}.

\subsection{Proofs of the main theorems}

We now provide the proofs of the main theorems (that is,
Theorems~\ref{thm:infpres} and~\ref{thm:main}), subject to information
that we shall establish in the sections of the paper that follow.
Here we link the groups $V$,~$P_{3}$ and~$P_{\infty}$.  Specifically,
we build homomorphisms between these groups as indicated in the
following diagram:

\begin{equation}
  \setlength{\dgARROWLENGTH}{1.2em}
  \begin{diagram}
    \node[2]{P_{3}} \arrow{e,tb,A,V}{\tau}{\text{Tietze}}
    \node{\bar{P}_{3}} \arrow{se,t,A}{\theta} \\
    \node{\langle \bar{A},\bar{B},\bar{C},\bar{\pi}_{0} \rangle}
    \arrow{ne,t,J}{i_{0}} \node[3]{\langle s_{00,01} , s_{01,10} ,
      s_{10,11} , s_{1,00} \rangle} \arrow{sw,t,L}{i_{1}} \\
    \node[2]{V} \arrow{nw,t,A}{\psi} \node{P_{\infty}}
    \arrow{w,t,A}{\phi}
  \end{diagram}
  \label{eq:hexagon}
\end{equation}

We already know that $\phi \colon P_{\infty} \to V$ is a surjective
homomorphism.  We now describe the other parts of the hexagon of maps.

The majority of the work in
Sections~\ref{sec:level3}--\ref{sec:finaldetails} involves the
presentation for the group~$P_{3}$, where we establish information
about the swaps~$\swap{\alpha}{\beta}$ defined above.  In
Section~\ref{sec:level3}, we  verify that these swaps satisfy the
order relations, conjugacy relations and split relations of
R.~Thompson's group~$V$, providing we restrict to those involving only
swaps~$\swap{\alpha}{\beta}$ with $\length{\alpha},\length{\beta} \leq
3$.  Then in Section~\ref{sec:CFP}, we establish that four specific
elements $\bar{A}$,~$\bar{B}$, $\bar{C}$ and~$\bar{\pi}_{0}$
in~$P_{3}$ satisfy the fourteen relations listed in
Cannon--Floyd--Parry~\cite{CFP} that define the group~$V$.  In that
section, we ensure that we only rely upon the consequences of our
Relations~\ref{R:S4}--\ref{R:[33,23]} established in
Section~\ref{sec:level3}.  This then guarantees the existence of a
surjective homomorphism $\psi \colon V \to \langle \bar{A}, \bar{B},
\bar{C}, \bar{\pi}_{0} \rangle$.

In the final section, we establish that $\langle
\bar{A},\bar{B},\bar{C},\bar{\pi}_{0} \rangle$~coincides with the
group~$P_{3}$ (see Proposition~\ref{prop:P3-gens}), from which it
follows that the natural inclusion map $i_{0}$~is also surjective.

Amongst the generators for~$P_{3}$ are the elements $a =
\swap{00}{01}$ and $b = (01 \; 10 \; 11)$, which satisfy the relations
of the symmetric group~$S_{4}$ (Relations~\ref{R:S4}).  Of course,
$S_{4}$~also enjoys a presentation in terms of transpositions
involving only order and conjugacy relations.  Accordingly, we apply
Tietze transformations to convert the presentation for~$P_{3}$ into
one for a group~$\bar{P}_{3}$ with generators
$\swap{00}{01}$,~$\swap{01}{10}$, $\swap{10}{11}$ and~$\swap{1}{00}$
and some order, conjugacy and split relations (specifically
translations of \ref{R:12}--\ref{R:[33,23]}, together with the new
ones to replace~\ref{R:S4}).  Thus we have on the one hand, an
isomorphism $\tau \colon P_{3} \to \bar{P}_{3}$ and, on the other, a
surjective homomorphism $\theta \colon \bar{P}_{3} \to \langle
s_{00,01}, s_{01,10}, s_{10,11}, s_{1,00} \rangle$ (a subgroup
of~$P_{\infty}$), since the relations defining~$\bar{P}_{3}$ all hold
in~$P_{\infty}$.

We can now deduce that $P_{3} = \langle
\bar{A},\bar{B},\bar{C},\bar{\pi}_{0} \rangle$ is not trivial, since
successively composing the appropriate maps in~\eqref{eq:hexagon}
sends, for example, $a = \swap{00}{01}$ to the non-identity
element~$t_{00,01}$ in~$V$.  Hence, from simplicity of~$V$, we
conclude $P_{3} \cong V$ and therefore, subject to the work in
Sections~\ref{sec:level3}--\ref{sec:finaldetails}, establish
Theorem~\ref{thm:main}.

Finally, it is relatively straightforward to observe that if
$\alpha$~is a non-empty sequence in~$X^{\ast}$, then there is a
product~$w$ involving only the swaps $\swap{00}{01}$,~$\swap{01}{10}$,
$\swap{10}{11}$ and~$\swap{1}{00}$ such that $00 \cdot w = \alpha$.
From this, one quickly deduces, principally relying upon the conjugacy
relations, that one can conjugate~$s_{00,01}$ by some element
of~$\langle s_{00,01}, s_{01,10}, s_{10,11}, s_{1,00} \rangle$ to any
generator~$s_{\alpha,\beta}$ where $(\alpha,\beta) \neq (0,1)$.  This
ensures that the inclusion~$i_{1}$ is surjective.  Hence $P_{\infty}
\cong V$ also and we have established Theorem~\ref{thm:infpres}.

The remaining sections are perhaps technical, but carry out the
deferred work just as described above.  We hope that these sections
will quickly impress the reader with the utility of the permutation
notation~$\swap{\alpha}{\beta}$ in performing calculations within
R.~Thompson's group~$V$.

\paragraph{Remarks}
\begin{enumerate}
\item If our goal had been simply to establish
  Theorem~\ref{thm:infpres}, then our work in the next two sections
  would have been greatly reduced.  Indeed, it is actually rather easy
  to show that the group~$P_{\infty}$ defined by all the relations
  listed in Theorem~\ref{thm:infpres} is isomorphic to~$V$ (for
  example, the relations that Dehornoy~\cite{Dehornoy} lists are
  particularly straightforward to deduce).  However, from the
  viewpoint of the high transitivity of the action of~$V$ on the
  Cantor set~$\C$, one expects that it would be enough to restrict to
  relations involving swaps~$\swap{\alpha}{\beta}$ with
  $\length{\alpha}$~and~$\length{\beta}$ bounded.  Indeed, this is
  what leads to our presentation for the group~$P_{3}$ and the small
  set of relations, \ref{R:S4}--\ref{R:[33,23]}, where we are relying
  upon a small subset involving only swaps~$\swap{\alpha}{\beta}$ with
  $\length{\alpha},\length{\beta} \leq 3$.  Establishing
  that this set of relations is sufficient is the delicate business of
  Sections~\ref{sec:level3} and~\ref{sec:CFP}.

\item Now that we have established that the groups
  $P_{3}$,~$P_{\infty}$ and~$V$ are all isomorphic, it is safe to use
  the notation~$\swap{\alpha}{\beta}$ as a convenient notation for the
  map~$t_{\alpha,\beta}$ as defined earlier.  We can then perform
  computations within R.~Thompson's group~$V$ employing this notation,
  for example, along the lines of those in the sections that follow,
  and we hope this will be of use to those working with elements in
  this group.
\end{enumerate}

\section{\boldmath Verification of relations to level~$3$}
\label{sec:level3}

Our principal aim is to establish that all the relations holding in
R.~Thompson's group~$V$ can be deduced from
Relations~\ref{R:S4}--\ref{R:[33,23]}.  In this section, we complete
the first stage of our technical calculations by establishing
essentially a subset of the infinitely many relations in the
list~\eqref{eq:inf-rels}, namely those order relations, the conjugacy
relations and the split relations involving only
swaps~$\swap{\alpha}{\beta}$ with $\length{\alpha},\length{\beta} \leq
3$.  It will turn out that these are enough to then deduce the
fourteen relations for~$V$ found in~\cite{CFP} as we shall see in
Section~\ref{sec:CFP}.

Accordingly, in this section, we shall verify all relations of the
form
\begin{align*}
  \swap{\alpha}{\beta}^{2} &= 1 \\
  \swap{\alpha}{\beta}^{\swap{\gamma}{\delta}} &=
  \swap{\alpha\!\cdot\!\swap{\gamma}{\delta}\;}{\beta\!\cdot\!\swap{\gamma}{\delta}} \\
  \swap{\alpha}{\beta} &= \swap{\alpha0}{\beta0} \,
  \swap{\alpha1}{\beta1}
\end{align*}
whenever any swap~$\swap{\kappa}{\lambda}$ appearing above satisfies
$\length{\kappa},\length{\lambda} \leq 3$.  (So, for example, the
split relation $\swap{\alpha}{\beta} = \swap{\alpha0}{\beta0} \,
\swap{\alpha1}{\beta1}$ needs only to be verified for
$\length{\alpha},\length{\beta} \leq 2$ in order that the swaps on the
right-hand side of the equation fulfil this requirement.)  The main
focus throughout the section will be in establishing all the conjugacy
relations~$\sigma^{\tau} = \upsilon$ and we will consider these
relations when we select $\sigma$,~$\tau$ and~$\upsilon$ from the
following sets:
\begin{align*}
  \Sw{2} &= \set{\swap{\alpha}{\beta}}{\text{$(\alpha,\beta) = (0,1)$
      or $\alpha,\beta \in X^{2}$}} \\
  \Sw{3} &= \set{\swap{\alpha}{\beta}}{\alpha, \beta \in X^{3}} \\
  \Sw{mn} &= \set{\swap{\alpha}{\beta}}{\alpha \in X^{m}, \; \beta \in
    X^{n}}, \qquad \text{where $1 \leq m < n \leq 3$}
\end{align*}

We start our verification by first noting that $\swap{1}{00}$~is a
conjugate of~$\swap{00}{01}$ by Relation~\ref{R:12} and hence has
order dividing~$2$ by the first relation in~\ref{R:S4}.  From this and
the definitions of the swaps, we now know that
$\swap{\alpha}{\beta}^{2} = 1$ whenever $\alpha \perp \beta$ with
$\length{\alpha},\length{\beta} \leq 3$.  We will use this fact
throughout the rest of this section.

We step through the various relations, essentially introducing
``longer'' swaps through the stages.  Accordingly, we start with
relations involving only swaps from $\Sw{12} \cup \Sw{2}$, then
introducing swaps from~$\Sw{13}$, and so on.  We need to take various
side-trips from this general direction in order to successfully
establish all the relations we want.

\subsection{\boldmath Relations involving $\Sw{12}$ and~$\Sw{2}$ only}

With careful analysis, one can soon establish which conjugacy
relations involve only swaps selected from~$\Sw{12} \cup \Sw{2}$.
These are, for $x,y \in X$, the following three relations:
\begin{align}
  \swap{x}{\bar{x}y}^{\swap{0}{1}} &= \swap{\bar{x}}{xy}
  \label{eq:12^11}
  \\
  \swap{x}{\bar{x}y}^{\swap{\bar{x}y}{\bar{x}\bar{y}}} &=
  \swap{x}{\bar{x}\bar{y}}
  \label{eq:12^22}
  \\
  \swap{x}{\bar{x}{y}}^{\swap{x}{\bar{x}\bar{y}}} &=
  \swap{\bar{x}y}{\bar{x}\bar{y}}
  \label{eq:12^12=22}
\end{align}
These are actually very easy to verify.  For
example, \eqstate{eq:12^11} simply follows from our definition
of~$\swap{0}{1x}$ (for $x \in X$) in~\eqref{D:12}, while we can
establish \eqstate{eq:12^22} using our definition of~$\swap{1}{01}$
in~\eqref{D:12} and then conjugating, if necessary, by~$\swap{0}{1}$
and using the now established Equation~\eqref{eq:12^11}.  We now have
the first step in our verification and results along the lines of the
following lemma will occur throughout our progress.

\begin{lemma}
  \label{lem:12^2}
  All conjugacy relations of the form $\sigma^{\tau} = \upsilon$ where
  $\sigma,\upsilon \in \Sw{12}$ and $\tau \in \Sw{2}$ can be deduced
  from Relations~\ref{R:S4}--\ref{R:[33,23]}.\qed
\end{lemma}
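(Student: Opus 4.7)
The plan is to enumerate the conjugacy relations covered by the lemma and show that each one is already among~\eqref{eq:12^11} and~\eqref{eq:12^22}, whose proofs the preceding paragraphs have sketched. The enumeration hinges on parametrising $\Sw{12}$: since $\alpha \perp \beta$ is required and any length-one string that is not a prefix of a given length-two string must differ from its first letter, every element of $\Sw{12}$ has the form $\swap{x}{\bar{x}y}$ for some $x,y \in X$, giving only four swaps to consider.

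First I would deal with $\tau = \swap{0}{1}$. Applying the partial action formula~\eqref{eq:swap-act}, one gets $x \cdot \swap{0}{1} = \bar{x}$ and $\bar{x}y \cdot \swap{0}{1} = xy$, so the output is $\swap{\bar{x}}{xy} \in \Sw{12}$, and the relation is exactly~\eqref{eq:12^11}. This is essentially the defining equation for $\swap{0}{1x}$ in~\eqref{D:12}, once one remembers that all the swaps involved have order at most~$2$, a fact already established at the start of this section using Relation~\ref{R:12} and the first of Relations~\ref{R:S4}.

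Next I would handle $\tau = \swap{\mu}{\nu}$ with $\mu,\nu \in X^{2}$. For $x \cdot \tau$ to be defined, neither $\mu$ nor $\nu$ may have $x$ as a strict prefix, so both begin with $\bar{x}$; combined with $\mu \perp \nu$ this forces $\{\mu,\nu\} = \{\bar{x}0, \bar{x}1\}$, and hence $\tau = \swap{\bar{x}y}{\bar{x}\bar{y}}$ for either choice of $y$. Computing, $x \cdot \tau = x$ and $\bar{x}y \cdot \tau = \bar{x}\bar{y}$, so the relation reads $\swap{x}{\bar{x}y}^{\swap{\bar{x}y}{\bar{x}\bar{y}}} = \swap{x}{\bar{x}\bar{y}}$, which is~\eqref{eq:12^22}. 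The case $x = 1$, $y = 0$ of this is the defining equation for $\swap{1}{01}$ in~\eqref{D:12}; the remaining three choices of $(x,y)$ follow by conjugating that instance by $\swap{0}{1}$ and invoking~\eqref{eq:12^11} (together with the involutivity of $\swap{0}{1}$).

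No step here is genuinely hard; the substance of the lemma is that the above case analysis is complete. The subtlest point is the definedness check in the second case, which is what pins down $\tau$ uniquely for each choice of $\sigma$ and, crucially, guarantees that the output $\upsilon$ lands in $\Sw{12}$ rather than in $\Sw{2}$. The latter is precisely what happens for the relation~\eqref{eq:12^12=22}, and that configuration is deliberately excluded here by the hypothesis $\upsilon \in \Sw{12}$.
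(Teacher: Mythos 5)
Your proposal is correct and follows essentially the same route as the paper: it identifies that the relations in question are exactly Equations~\eqref{eq:12^11} and~\eqref{eq:12^22}, derives the first from the definition of $\swap{0}{1x}$ in~\eqref{D:12}, and derives the second from the defining equation for $\swap{1}{01}$ by conjugating by $\swap{0}{1}$ and applying~\eqref{eq:12^11}. Your explicit definedness analysis pinning down the admissible $\tau$ is a welcome elaboration of the paper's ``careful analysis'' remark; the only (harmless) imprecision is that reaching all four instances of~\eqref{eq:12^22} also uses conjugation by $\swap{00}{01}$ (an order-two element by Relations~\ref{R:S4}), not by $\swap{0}{1}$ alone.
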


This lemma contributes now to establishing
\eqstate{eq:12^12=22}, since we observe that, for the case $x = 1$
and~$y = 0$, the equation is Relation~\ref{R:12} and that the general
equation can then be deduced by conjugating by a product of elements
from~$\Sw{2}$.  Specifically, conjugating by~$\swap{0}{1}$ gives
$\swap{0}{11}^{\swap{0}{10}} = \swap{10}{11}$ and then subsequently
the two equations now established by
$\swap{00}{01}$~and~$\swap{10}{11}$, respectively, establishes the
final cases.

\subsection{\boldmath Relations involving $\Sw{12}$, $\Sw{13}$
  and~$\Sw{2}$ only}

When we introduce swaps from~$\Sw{13}$, in addition to those from
$\Sw{12}$ and~$\Sw{2}$, the relations that need to be verified are,
for $x,y,z \in X$, the following:
\begin{align}
  \swap{x}{\bar{x}yz}^{\swap{0}{1}} &= \swap{\bar{x}}{xyz}
  \label{eq:13^11}
  \\
  \swap{x}{\bar{x}yz}^{\swap{\bar{x}y}{\bar{x}\bar{y}}} &=
  \swap{x}{\bar{x}\bar{y}z}
  \label{eq:13^22}
  \\
  \swap{x}{\bar{x}yz}^{\swap{x}{\bar{x}y}} &= \swap{xz}{\bar{x}y}
  \label{eq:13^12=22}
\end{align}
Both \textbf{Equations~(\ref{eq:13^11})} and~\textbf{(\ref{eq:13^22})}
follow quickly from the definitions in \eqref{D:13}.  They establish:

\begin{lemma}
  \label{lem:13^2}
  All conjugacy relations of the form $\sigma^{\tau} = \upsilon$ where
  $\sigma,\upsilon \in \Sw{13}$ and $\tau \in \Sw{2}$ can be deduced
  from Relations~\ref{R:S4}--\ref{R:[33,23]}.\qed
\end{lemma}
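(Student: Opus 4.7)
The plan is to reduce the lemma to verifying Equations~\eqref{eq:13^11} and~\eqref{eq:13^22}, and to derive each of these in a few lines by unpacking the defining relations~\eqref{D:13} for the generators of~$\Sw{13}$ together with the $S_{4}$ machinery supplied by Relations~\ref{R:S4}.

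First I would enumerate the conjugacy relations to be checked.  An element of~$\Sw{13}$ has the form $\swap{x}{\bar{x}yz}$ with $x,y,z \in X$. For $\swap{x}{\bar{x}yz}^{\tau}$ to be defined and to land back in~$\Sw{13}$, the conjugating swap $\tau \in \Sw{2}$ must move both prefixes $x$ and $\bar{x}yz$. If $\tau = \swap{0}{1}$ this is automatic, and the result is $\swap{\bar{x}}{xyz}$, giving Equation~\eqref{eq:13^11}. If instead $\tau = \swap{\alpha}{\beta}$ with $\alpha,\beta \in X^{2}$, then requiring $x \cdot \tau$ to be defined forces both $\alpha$ and $\beta$ to begin with $\bar{x}$, so (up to the convention $\swap{\alpha}{\beta} = \swap{\beta}{\alpha}$) one has $\{\alpha,\beta\} = \{\bar{x}0,\bar{x}1\}$, and the conjugate is $\swap{x}{\bar{x}\bar{y}z}$, yielding Equation~\eqref{eq:13^22}. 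No further cases arise.

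Next I would verify each equation.  For Equation~\eqref{eq:13^11}, the case $x=1$ is, by construction, the third line of~\eqref{D:13}: $\swap{0}{1yz} = \swap{1}{0yz}^{\swap{0}{1}}$; the case $x=0$ then follows by conjugating both sides by $\swap{0}{1}$ and using the fact that $\swap{0}{1}$ is an involution. For Equation~\eqref{eq:13^22}, the case $x=1$, $y=0$ is the second line of~\eqref{D:13}; the case $x=1$, $y=1$ comes by inverting this via conjugation by the involution $\swap{00}{01}$; and both $x=0$ cases reduce to the corresponding $x=1$ case by conjugating throughout by $\swap{0}{1}$, using the identity $(g^{h})^{k} = (g^{k})^{h^{k}}$ together with the $S_{4}$-level relation $\swap{0y}{0\bar{y}}^{\swap{0}{1}} = \swap{1y}{1\bar{y}}$ (a direct consequence of~\ref{R:S4} and the definitions~\eqref{D:22}) and the just-established Equation~\eqref{eq:13^11}.

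There is no genuine obstacle here: the definitions in~\eqref{D:13} were engineered precisely so that Equations~\eqref{eq:13^11} and~\eqref{eq:13^22} hold, so the only real work is the case enumeration and the short transport of each model case through the $S_{4}$ action on length-two prefixes.  The calculations are essentially bookkeeping and serve to prepare the ground for the heavier verifications involving~$\Sw{23}$ and~$\Sw{33}$ later in the section.
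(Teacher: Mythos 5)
Your proposal is correct and takes essentially the same route as the paper: the paper likewise reduces the lemma to Equations~\eqref{eq:13^11} and~\eqref{eq:13^22} and simply records that both ``follow quickly from the definitions in~\eqref{D:13}'', while you fill in the routine case enumeration and the transport through the $S_{4}$ action. (One cosmetic point: the condition on $\tau$ is that the partial action be \emph{defined} on both prefixes, not that it move both --- $\swap{\bar{x}0}{\bar{x}1}$ fixes $x$ --- but your subsequent analysis handles this correctly.)
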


If we expand the definition of~$\swap{1}{00z}$ from
Equation~\eqref{D:13}, we obtain $\swap{1}{00z}^{\swap{1}{00}} =
\swap{00}{1z}$ for $z \in X$.  Now conjugate by an appropriate product
of elements from~$\Sw{2}$, using Lemmas~\ref{lem:12^2}
and~\ref{lem:13^2} similarly to the argument used for
Equation~\eqref{eq:12^12=22}, to establish \eqstate{eq:13^12=22}.

\subsection{\boldmath First batch of relations involving $\Sw{2}$ and
  $\Sw{23}$ only}

We now turn to relations involving swaps from~$\Sw{23}$.  There are
additional relations that we shall establish later involving only
swaps from $\Sw{2}$~and~$\Sw{23}$, but for now we are concerned with
the following equations (in particular, the first of our split
relations) for $x,y \in X$, for \emph{distinct} $\kappa,\lambda,\mu
\in X^{2}$, and for any $\tau \in \Sw{2}$ for which the first is
defined:
\begin{align}
  \swap{\kappa}{\lambda x}^{\tau} &=
  \swap{\kappa\!\cdot\!\tau\;}{(\lambda x)\!\cdot\!\tau}
  \label{eq:23^2}
  \\
  \swap{x}{\bar{x}y} &= \swap{x0}{\bar{x}y0} \, \swap{x1}{\bar{x}y1}
  \label{eq:12-split}
  \\
  \swap{\kappa}{\lambda x}^{\swap{\mu}{\lambda x}} &=
  \swap{\kappa}{\mu}
  \label{eq:23^23=22}
\end{align}

We shall need the following lemma to be able to manipulate the initial
swaps from~$\Sw{23}$ from which the others are built, as given
in~\eqref{D:23}.  We shall then establish \eqstate{eq:23^2} in the
form of Lemma~\ref{lem:23^2} below.

\begin{lemma}
  \label{lem:23help}
  \begin{enumerate}
  \item
    \label{23split-commute}
    $\swap{1}{00}$,~$\swap{10}{000}$ and~$\swap{11}{001}$ all commute;
  \item
    \label{(01,00x)-shrink}
    $\swap{01}{00x}^{\swap{1}{00}} = \swap{01}{1x}$;
  \end{enumerate}
\end{lemma}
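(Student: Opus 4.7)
The plan for part~(i) is to derive the three commutativity relations directly from Relation~\ref{R:split} together with the fact---already recorded at the start of this section---that every swap $\swap{\alpha}{\beta}$ with $\length{\alpha},\length{\beta} \leq 3$ has order dividing~$2$. Relation~\ref{R:split} gives $\swap{1}{00} = \swap{10}{000}\,\swap{11}{001}$, so $(\swap{10}{000}\,\swap{11}{001})^{2} = \swap{1}{00}^{2} = 1$. Since $\swap{10}{000}^{2} = \swap{11}{001}^{2} = 1$ as well, the standard involution trick---multiplying the resulting four-letter word by $\swap{11}{001}$ on the right and then by $\swap{10}{000}$ on the left---immediately yields $[\swap{10}{000},\swap{11}{001}] = 1$. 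Because $\swap{1}{00}$ is then a product of two commuting factors, it commutes with each of them, so all three elements pairwise commute.

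For part~(ii), the strategy is to unfold $\swap{01}{00x}$ via the two clauses of~\eqref{D:23} in succession,
\[
\swap{01}{00x} = \swap{00}{01x}^{\swap{00}{01}} = \swap{1}{01x}^{\swap{1}{00}\,\swap{00}{01}},
\]
and then to conjugate by $\swap{1}{00}$:
\[
\swap{01}{00x}^{\swap{1}{00}} = \swap{1}{01x}^{\swap{1}{00}\,\swap{00}{01}\,\swap{1}{00}}.
\]
Relation~\ref{R:12}, rewritten using that $\swap{1}{00}$ is self-inverse, reads $\swap{1}{00}\,\swap{00}{01}\,\swap{1}{00} = \swap{1}{01}$ and so collapses the conjugator to a single level-$2$ swap. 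The identity to prove reduces to $\swap{1}{01x}^{\swap{1}{01}} = \swap{01}{1x}$, which is exactly the specialisation of the already-verified \eqstate{eq:13^12=22} obtained by setting its parameters $x$~and~$y$ both equal to~$1$ (with what that equation calls $z$ playing the role of our free variable).

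The main obstacle is, frankly, modest: part~(i) is almost automatic once one recognises the standard involution trick, and in part~(ii) the one substantive step is spotting that the nested conjugators surrounding $\swap{1}{01x}$ combine via Relation~\ref{R:12} into a single level-$2$ swap, at which point the identity collapses to a conjugacy relation already established in the preceding subsection.
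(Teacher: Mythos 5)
Your proposal is correct and follows essentially the same route as the paper: part~(i) is the paper's argument verbatim (square the split relation \ref{R:split} and use that all swaps of length at most~$3$ are involutions, then note that $\swap{1}{00}$ is a product of the two commuting factors), and part~(ii) unfolds $\swap{01}{00x}$ via~\eqref{D:23}, collapses the conjugator $\swap{1}{00}\,\swap{00}{01}\,\swap{1}{00}$ to $\swap{1}{01}$ using Relation~\ref{R:12}, and finishes with the specialisation of \eqstate{eq:13^12=22}, exactly as in the paper. No gaps; all ingredients you invoke are available at this point in the development.
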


\begin{proof}
  \ref{23split-commute} Using Relation~\ref{R:split} and the fact
  that all the swaps involved have order dividing~$2$, we conclude
  that $\swap{10}{000}$~and~$\swap{11}{001}$ commute.  It then
  follows, again using~\ref{R:split}, that $\swap{1}{00}$~also
  commutes with these elements.

  \ref{(01,00x)-shrink}~Pull apart the formula for~$\swap{01}{00x}$
  using the definitions in~\eqref{D:23}, and then apply
  Relation~\ref{R:12} and Equation~\eqref{eq:13^12=22} as follows:
  \[
  \swap{01}{00x}^{\swap{1}{00}} =
  \swap{1}{01x}^{\swap{1}{00} \, \swap{00}{01} \, \swap{1}{00}} =
  \swap{1}{01x}^{\swap{1}{01}} = \swap{01}{1x}.
  \]
\end{proof}

\begin{lemma}
  \label{lem:23^2}
  All conjugacy relations of the form $\sigma^{\tau} = \upsilon$ where
  $\sigma,\upsilon \in \Sw{23}$ and $\tau \in \Sw{2}$ can be deduced
  from Relations~\ref{R:S4}--\ref{R:[33,23]}.
\end{lemma}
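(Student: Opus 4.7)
The plan is to deduce the required conjugacy relations by combining the defining equations~\eqref{D:23}, which already encode several such relations, with the previously established Lemmas~\ref{lem:12^2} and~\ref{lem:13^2} and the $S_{4}$ structure on $\{\swap{\kappa}{\lambda} : \kappa,\lambda \in X^{2}\}$ furnished by Relations~\ref{R:S4}.

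First I would extract the ``free'' $\Sw{23}$-by-$\Sw{2}$ conjugacy relations hidden in~\eqref{D:23}: after invoking the order relations on the conjugating swap, those defining equations whose conjugating element happens to lie in $\Sw{2}$ yield the base relations $\swap{00}{01x}^{\swap{00}{01}} = \swap{01}{00x}$, $\swap{0x}{0\bar{x}y}^{\swap{0}{1}} = \swap{1x}{1\bar{x}y}$ and $\swap{1x}{0yz}^{\swap{0}{1}} = \swap{0x}{1yz}$. Together these already handle conjugation at a specific list of $\Sw{23}$-elements by a specific short list of elements of~$\Sw{2}$.

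Second I would extend to an arbitrary $\tau \in \Sw{2}$. If $\tau = \swap{\kappa}{\lambda}$ with $\kappa,\lambda \in X^{2}$, Relations~\ref{R:S4} allow one to express $\tau$ as a product of the ``generator'' transpositions $\swap{00}{01}$,~$\swap{01}{10}$ and~$\swap{10}{11}$; and if $\tau = \swap{0}{1}$, the split~\eqref{D:11} expresses $\tau = \swap{00}{10}\,\swap{01}{11}$. In every case conjugation by $\tau$ becomes a composition of conjugations by swaps from $X^{2}\times X^{2}$, which can be chained via the transitivity $\sigma^{\tau_{1}\tau_{2}} = (\sigma^{\tau_{1}})^{\tau_{2}}$. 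Combined with the observation that the action of $\langle\Sw{2}\rangle$ on the pairs $(\alpha,\beta) \in X^{2} \times X^{3}$ with $\alpha \perp \beta$ has only a small number of orbits, this reduces the lemma to checking a short finite list of representative relations, which I would verify directly using the base relations together with Lemmas~\ref{lem:12^2} and~\ref{lem:13^2}.

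The main obstacle will be the bookkeeping required in the reduction step: at each intermediate stage of a chain of conjugations we must recognise the resulting element as a swap in $\Sw{23}$ with the correct indices, and the correspondence between a product of conjugating swaps and the prefix substitution it induces on~$X^{\ast}$ can be delicate. The trickiest case is when $\swap{0}{1}$ is interleaved with level-two swaps: since this mixes the top-level branches, a careful case analysis is needed to confirm that one of the defining clauses of~\eqref{D:23} still applies to each intermediate conjugate, and that the whole chain indeed terminates at the expected~$\upsilon$.
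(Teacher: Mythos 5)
Your overall skeleton --- use the $S_{4}$ structure from Relations~\ref{R:S4} to transport a few ``base'' conjugacy relations around the orbits of $\langle\Sw{2}\rangle$ on incomparable pairs --- is the same strategy the paper uses. But there is a genuine gap at exactly the point you defer to ``direct verification''. The orbit reduction is only legitimate if the assignment $\rho \mapsto \swap{00}{01x}^{\rho}$ depends only on the image of the pair $(00,01x)$ under $\rho$, that is, if $\swap{00}{01x}$ commutes with the stabiliser of $(00,01)$ in $S_{4}$, which is $\langle\swap{10}{11}\rangle$. (Concretely: every element of $\Sw{23}$ is \emph{defined} in~\eqref{D:23} as $\swap{00}{01x}^{\rho}$ for one particular product $\rho$ of $\Sw{2}$-swaps, and for a general $\tau$ the product $\rho_{1}\tau$ will differ from the defining product $\rho_{2}$ of the target swap by a possibly nontrivial stabiliser element.) The two relations $[\swap{00}{010},\swap{10}{11}]=1$ and $[\swap{00}{011},\swap{10}{11}]=1$ are therefore the entire content of the lemma, and they are not consequences of the defining equations~\eqref{D:23} together with Lemmas~\ref{lem:12^2} and~\ref{lem:13^2}, which is all your proposal invokes. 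Note that one cannot even attack $\swap{00}{010}^{\swap{10}{11}}$ by unwinding the definition $\swap{00}{010}=\swap{1}{010}^{\swap{1}{00}}$ and pushing $\swap{10}{11}$ inward, because $1\cdot\swap{10}{11}$ is undefined, so no conjugacy relation for $\swap{1}{00}^{\swap{10}{11}}$ is available to you.

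The missing ingredient is the split relation~\ref{R:split}: writing $\swap{1}{00}=\swap{10}{000}\,\swap{11}{001}$ is what allows the level-$1$ swap to interact with $\swap{10}{11}$ piece by piece. The paper establishes the two commutators by a chain of manipulations built on Relation~\ref{R:split} and the auxiliary Lemma~\ref{lem:23help} (in particular the identity $\swap{01}{00x}^{\swap{1}{00}}=\swap{01}{1x}$), first proving that $\swap{10}{000}$ commutes with $\swap{01}{11}$ and then transporting that back to $[\swap{00}{010},\swap{10}{11}]=1$; the case of $\swap{00}{011}$ needs a further, separate computation. Your proposal never mentions Relation~\ref{R:split}, and your identification of the ``trickiest case'' (interleaving $\swap{0}{1}$ with level-two swaps) points at bookkeeping rather than at this substantive obstruction; as written, the argument would stall at the stabiliser step.
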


\begin{proof}
  The first half of the proof deals with the case when $\sigma =
  \swap{\kappa}{\lambda0}$ for distinct $\kappa,\lambda \in X^{2}$.
  To establish this, we must first verify that the
  swap~$\swap{00}{010}$ commutes with~$\swap{10}{11}$.  However, to
  achieve this, we actually work first with the
  swap~$\swap{10}{000}$.  Indeed,
  \begin{align*}
    \swap{10}{000} \, \swap{01}{11} &=
    \swap{1}{00} \, \swap{11}{001} \, \swap{01}{11}
    &&\text{by Rel.~\ref{R:split}} \\
    &= \swap{1}{00} \, \swap{01}{11} \, \swap{01}{001}
    &&\text{by the def.\ of~$\swap{11}{001}$ in~\eqref{D:23}} \\
    &= \swap{01}{001} \, \swap{01}{11} \, \swap{1}{00}
    &&\text{by Lem.~\ref{lem:23help}\ref{(01,00x)-shrink} twice} \\
    &= \swap{01}{11} \, \swap{11}{001} \, \swap{1}{00}
    &&\text{by the def.\ in~\eqref{D:23}} \\
    &= \swap{01}{11} \, \swap{10}{000}
    &&\text{by Lem.~\ref{lem:23help}\ref{23split-commute} and
      Rel.~\ref{R:split}.}
  \end{align*}
  As the definition of~$\swap{10}{000}$ in Equation~\eqref{D:23} is
  $\swap{00}{010}^{\swap{00}{01} \, \swap{01}{10}}$, we conclude that
  $\swap{00}{010}$ commutes with $\swap{01}{11}^{\swap{01}{10} \,
    \swap{00}{01}} = \swap{10}{11}$.

  So we now turn to the required relation when $\sigma =
  \swap{\kappa}{\lambda0}$ for distinct $\kappa,\lambda \in X^{2}$.
  As described earlier, we view~$\tau$ as a permutation of~$X^{2}$.
  As such a map, suppose $\tau$~maps $\kappa$~and~$\lambda$ to
  $\mu$~and~$\nu$, respectively.  Then by the definition in
  Equation~\eqref{D:23} there are products $\rho_{1}$~and~$\rho_{2}$
  of elements from~$\Sw{2}$ such that
  \[
  \swap{\kappa}{\lambda0} = \swap{00}{010}^{\rho_{1}}
  \qquad \text{and} \qquad
  \swap{\mu}{\nu0} = \swap{00}{010}^{\rho_{2}}.
  \]
  Specifically, $\rho_{1}$~and~$\rho_{2}$ are products that, when
  viewed as permutations of~$X^{2}$, map $00$~and~$01$ to
  $\kappa$~and~$\lambda$ and to $\mu$~and~$\nu$, respectively.  Then
  $\rho_{1} \tau \rho_{2}^{-1} \in \langle \swap{10}{11} \rangle$
  since it fixes both $00$~and~$01$.  Hence, by our previous
  calcuation, $\swap{00}{010}$~commutes with~$\rho_{1} \tau
  \rho_{2}^{-1}$, so
  \begin{equation}
    \swap{\kappa}{\lambda0}^{\tau} = \swap{00}{010}^{\rho_{1}\tau} =
    \swap{00}{010}^{\rho_{2}} = \swap{\mu}{\nu0}.
    \label{eq:23^2-case0}
  \end{equation}
  Thus, we have established Equation~\eqref{eq:23^2} in the case when
  $x = 0$.

  To deduce the equation for $x = 1$, we proceed similarly and first
  need to establish that $\swap{00}{011}$~commutes
  with~$\swap{10}{11}$.  We calculate as follows:
  \begin{align*}
    \swap{10}{000}^{\swap{01}{000}}
    &= \swap{10}{000}^{\swap{1}{00} \, \swap{01}{10} \, \swap{1}{00}}
    &&\text{by Lem.~\ref{lem:23help}\ref{(01,00x)-shrink}} \\
    &= \swap{10}{000}^{\swap{01}{10} \, \swap{1}{00}}
    &&\text{by Lem.~\ref{lem:23help}\ref{23split-commute}} \\
    &= \swap{01}{000}^{\swap{1}{00}}
    &&\text{by Eq.~\eqref{eq:23^2-case0}} \\
    &= \swap{01}{10}
    &&\text{by Lem.~\ref{lem:23help}\ref{(01,00x)-shrink}}.
  \end{align*}
  Conjugate by~$\swap{01}{10}$ and use Equation~\eqref{eq:23^2-case0}
  again to establish the formula $\swap{01}{000}^{\swap{10}{000}} =
  \swap{01}{10}$.  Now we find
  \begin{align*}
    \swap{11}{001} \, \swap{01}{10} &=
    \swap{10}{000} \, \swap{1}{00} \, \swap{01}{10}
    &&\text{by Rel.~\ref{R:split}} \\
    &= \swap{10}{000} \, \swap{01}{000} \, \swap{1}{00}
    &&\text{by Lem.~\ref{lem:23help}\ref{(01,00x)-shrink}} \\
    &= \swap{01}{10} \, \swap{10}{000} \, \swap{1}{00}
    &&\text{as just established} \\
    &= \swap{01}{10} \, \swap{11}{001}
    &&\text{by Rel.~\ref{R:split}.}
  \end{align*}
  Thus $[ \swap{11}{001} , \swap{01}{10} ] = 1$, and then using the
  definition of~$\swap{11}{001}$ in~\eqref{D:23} and the
  Relations~\ref{R:S4}, we deduce $[ \swap{00}{011} , \swap{10}{11} ]
  = 1$.  We then proceed as in the first half of the proof, using this
  new equation, and we establish Equation~\eqref{eq:23^2} when $x =
  1$, completing the proof of the lemma.
\end{proof}

\eqstate{eq:12-split} now follows by conjugating
Relation~\ref{R:split} by an appropriate product of elements
from~$\Sw{2}$ using Lemmas~\ref{lem:12^2} and~\ref{lem:23^2}.

The establishment of \eqstate{eq:23^23=22} requires an intermediate
observation first.  We use Lemma~\ref{lem:23^2} to tell us that
$\swap{11}{001}$~commutes with~$\swap{01}{10}$, so
\begin{align*}
  \swap{01}{10} \, \swap{11}{001} &=
  \swap{11}{001} \, \swap{01}{10} \\
  &= \swap{10}{000} \, \swap{1}{00} \, \swap{01}{10}
  &&\text{by Rel.~\ref{R:split}} \\
  &= \swap{10}{000} \, \swap{01}{000} \, \swap{1}{00}
  &&\text{by Lem.~\ref{lem:23help}\ref{(01,00x)-shrink}} \\
  &= \swap{10}{000} \, \swap{01}{000} \, \swap{10}{000} \,
  \swap{11}{001}
  &&\text{by Rel.~\ref{R:split}.}
\end{align*}
Hence $\swap{01}{000}^{\swap{10}{000}} = \swap{01}{10}$.  By a similar
sequence of calculations we establish
\begin{align*}
  \swap{10}{000} \, \swap{01}{11}
  &= \swap{01}{11} \, \swap{10}{000}
  &&\text{by Lem.~\ref{lem:23^2}} \\
  &= \swap{01}{11} \, \swap{1}{00} \, \swap{11}{001}
  &&\text{by Rel.~\ref{R:split}} \\
  &= \swap{1}{00} \, \swap{01}{001} \, \swap{11}{001}
  &&\text{by Lem.~\ref{lem:23help}\ref{(01,00x)-shrink}} \\
  &= \swap{10}{000} \, \swap{11}{001} \, \swap{01}{001} \,
  \swap{11}{001}
  &&\text{by Rel.~\ref{R:split}},
\end{align*}
so $\swap{01}{001}^{\swap{11}{001}} = \swap{01}{11}$.  We now have two
equations that, once we conjugate by a product of elements
from~$\Sw{2}$, yield the general form of Equation~\eqref{eq:23^23=22}
using Lemma~\ref{lem:23^2} and Relations~\ref{R:S4}.

\subsection{\boldmath Relations involving $\Sw{12}$, $\Sw{2}$
  and~$\Sw{23}$}

The relations that involve swaps from~$\Sw{12}$, $\Sw{23}$
and~$\Sw{2}$ and that definitely include at least one swap from each
of the first two sets are, for $x,y,z,t \in X$, the following:
\begin{align}
  \swap{xy}{\bar{x}z}^{\swap{x}{\bar{x}\bar{z}}} &=
  \swap{\bar{x}z}{\bar{x}\bar{z}y}
  \label{eq:22^12=23}
  \\
  \swap{xy}{\bar{x}zt}^{\swap{x}{\bar{x}z}} &= \swap{xt}{\bar{x}zy}
  \label{eq:23^12=23}
\end{align}
To establish \eqstate{eq:22^12=23}, first calculate
\[
\swap{00}{1x}^{\swap{1}{01} \, \swap{1}{00}} =
\swap{00}{1x}^{\swap{1}{00} \, \swap{00}{01}} =
\swap{1}{00x}^{\swap{00}{01}} =
\swap{1}{01x}
\]
using Relation~\ref{R:12} and the definitions in~\eqref{D:13}.  Hence
\[
\swap{00}{1x}^{\swap{1}{01}} = \swap{1}{01x}^{\swap{1}{00}} = \swap{00}{01x}
\]
by the definitions in~\eqref{D:23}.  This is
Equation~\eqref{eq:22^12=23} in the case when $x = 1$ and $z = 0$.
The general equation then follows by conjugating by a suitable product
of elements from~$\Sw{2}$ and using Lemmas~\ref{lem:12^2}
and~\ref{lem:23^2}.

To establish \eqstate{eq:23^12=23}, we first deal with the case when
$x = 1$ and $z = 0$.  We calculate
\begin{multline*}
  \swap{1y}{00t}^{\swap{1}{00}}
  = \swap{01}{00t}^{\swap{01}{1y} \, \swap{1}{00}}
  = \swap{01}{00t}^{\swap{1}{00} \, \swap{01}{00y}} \\
  = \swap{01}{1t}^{\swap{01}{00y}}
  = \swap{1t}{00y}
\end{multline*}
using the definitions in~\eqref{D:23}, Equation~\eqref{eq:22^12=23}
(twice) and~\eqref{eq:23^23=22}.  Now conjugate by an appropriate
product of elements from~$\Sw{2}$, using Lemmas~\ref{lem:12^2}
and~\ref{lem:23^2}, to conclude that Equation~\eqref{eq:23^12=23}
holds.

\subsection{Intermediate relations}

Our current goal at this stage is to complete the establishment of
relations involving swaps from $\Sw{2}$~and~$\Sw{23}$ to supplement
Equations~\eqref{eq:23^2}--\eqref{eq:23^23=22} already obtained.
However, in order to achieve this, we need some intermediate relations
involving swaps from~$\Sw{3}$ of the form~$\swap{\kappa0}{\kappa1}$
for some $\kappa \in X^{2}$, specifically for $x,y \in X$ and distinct
$\kappa,\lambda \in X^{2}$:
\begin{align}
  \swap{\kappa}{\lambda x}^{\swap{\kappa}{\lambda\bar{x}}} &=
  \swap{\lambda0}{\lambda1}
  \label{eq:23^23=k01}
  \\
  \swap{\kappa}{\lambda x}^{\swap{\lambda0}{\lambda1}} &=
  \swap{\kappa}{\lambda\bar{x}}
  \label{eq:23^k01=23}
  \\
  [ \swap{x}{\bar{x}y} , \swap{\bar{x}\bar{y}0}{\bar{x}\bar{y}1} ] &=
  1
  \label{eq:[12,k01]}
\end{align}

We start by establishing a helpful lemma, analogous to
Lemmas~\ref{lem:23help} and~\ref{lem:23^2}, concerning the swaps of
the form~$\swap{\kappa0}{\kappa1}$.

\begin{lemma}
  \label{lem:k01help}
  \begin{enumerate}
  \item
    \label{(000,001)-shrink}
    $\swap{000}{001}^{\swap{1}{00}} = \swap{10}{11}$;
  \item
    \label{(000,001)-commute}
    $\swap{000}{001}$~commutes with~$\swap{01}{10}$,
    with~$\swap{01}{11}$, and with~$\swap{10}{11}$.
  \item
    \label{k01^2}
    All conjugacy relations of the form $\sigma^{\tau} = \upsilon$,
    where $\sigma,\upsilon \in \Sw{3}$ have the
    form~$\swap{\kappa0}{\kappa1}$ for some $\kappa \in X^{2}$ and
    $\tau \in \Sw{2}$, can be deduced from
    Relations~\ref{R:S4}--\ref{R:[33,23]}.
  \end{enumerate}
\end{lemma}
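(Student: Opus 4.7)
My plan is to prove the three parts in order, with~(ii) being the delicate heart.  For~(i), I would unpack $\swap{000}{001} = \swap{1}{000}^{\swap{1}{001}}$ from~\eqref{D:33}.  Using $(g^{h})^{k} = g^{hk}$ gives $\swap{000}{001}^{\swap{1}{00}} = \swap{1}{000}^{\swap{1}{001}\swap{1}{00}}$.  The definition of $\swap{1}{00x}$ in~\eqref{D:13} reads $\swap{1}{001} = \swap{00}{11}^{\swap{1}{00}}$, which together with $\swap{1}{00}^{2} = 1$ gives $\swap{1}{001}^{\swap{1}{00}} = \swap{00}{11}$ and hence $\swap{1}{001}\swap{1}{00} = \swap{1}{00}\,\swap{00}{11}$.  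Analogously $\swap{1}{000}^{\swap{1}{00}} = \swap{00}{10}$, so the expression collapses to $\swap{00}{10}^{\swap{00}{11}}$, and a single computation inside the $S_{4}$-quotient $\langle a,b\rangle$, justified by Relations~\ref{R:S4}, yields~$\swap{10}{11}$.

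For~(ii), the available inputs are Relation~\ref{R:[33,23]}, $[\swap{000}{010},\swap{10}{110}] = 1$, Relation~\ref{R:[23,23]}, $[\swap{00}{010},\swap{10}{111}] = [\swap{00}{011},\swap{10}{111}] = 1$, and the commutations of $\swap{00}{010}$ and $\swap{00}{011}$ with~$\swap{10}{11}$ already extracted during the proof of Lemma~\ref{lem:23^2}.  My plan is to transport each of the three target commutativities from one of these by conjugating both factors of the relevant commutator simultaneously, leaning on Lemmas~\ref{lem:12^2}--\ref{lem:23^2} for the conjugacy arithmetic, and by decomposing swaps into their split factors via Relation~\ref{R:split} and its conjugates so as to bridge the gap between the indices $110$,~$111$ appearing in Relations~\ref{R:[23,23]}--\ref{R:[33,23]} and the index~$11$ appearing in the target.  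I expect this decomposition-and-reassembly step, which must track the commutativity of several split factors at once, to be the principal technical obstacle.

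For~(iii), I reduce to~(ii).  By~\eqref{D:33}, $\swap{xy0}{xy1} = \swap{000}{001}^{\swap{00}{xy}}$ (with $\swap{00}{00}$ read as the identity), so for $\tau \in \Sw{2}$ the two sides of the desired conjugacy relation are $\swap{000}{001}^{\swap{00}{xy}\,\tau}$ and $\swap{000}{001}^{\swap{00}{xy\cdot\tau}}$ respectively.  They agree provided that $g = \swap{00}{xy}\,\tau\,\swap{00}{xy\cdot\tau}^{-1}$ centralises~$\swap{000}{001}$.  Viewed as a permutation of~$X^{2}$ through Relations~\ref{R:S4}, $g$ fixes~$00$, so it lies in the $S_{3}$ stabiliser of~$00$ inside $\langle\Sw{2}\rangle \cong S_{4}$; this $S_{3}$ is generated by $\swap{01}{10}$, $\swap{01}{11}$ and~$\swap{10}{11}$, each of which centralises $\swap{000}{001}$ by part~(ii).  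Hence~$g$ does too, and~(iii) follows.
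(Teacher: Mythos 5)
Parts (i) and (iii) of your proposal are correct and coincide with the paper's own proof. Part (i) is precisely the paper's computation $\swap{000}{001}^{\swap{1}{00}} = \swap{1}{000}^{\swap{1}{001}\,\swap{1}{00}} = \swap{1}{000}^{\swap{1}{00}\,\swap{00}{11}} = \swap{00}{10}^{\swap{00}{11}} = \swap{10}{11}$, and part (iii) is precisely the paper's reduction: the discrepancy element fixes $00$, hence lies in the stabiliser $\langle \swap{01}{10},\swap{01}{11},\swap{10}{11}\rangle \cong S_{3}$ inside the copy of $S_{4}$, and each generator centralises $\swap{000}{001}$ by part (ii).

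The gap is part (ii), which you correctly identify as the heart of the lemma and then do not prove: you give only a plan (``my plan is to transport\dots I expect this\dots to be the principal technical obstacle''), and the plan as stated would not go through. First, the commutations with $\swap{01}{10}$ and $\swap{01}{11}$ do not come from Relations \ref{R:[23,23]} or \ref{R:[33,23]} at all: the paper gets them in one step by conjugating $[\swap{10}{11},\swap{01}{00x}]=1$ (an instance of Lemma~\ref{lem:23^2}) by $\swap{1}{00}$, using part (i) and Lemma~\ref{lem:23help}\ref{(01,00x)-shrink}. Second, and more seriously, your route to $[\swap{000}{001},\swap{10}{11}]=1$ proposes to ``bridge the gap between the indices $110$, $111$ and the index $11$'' by splitting; but the split $\swap{10}{11}=\swap{100}{110}\,\swap{101}{111}$ is an instance of Equation~\eqref{eq:22-split}, established only after Lemma~\ref{lem:33help}, and likewise transporting the factor $\swap{000}{010}$ of Relation~\ref{R:[33,23]} into position requires $\Sw{2}$-conjugation of general $\Sw{3}$-swaps (Lemma~\ref{lem:33help}\ref{33^2}) --- both of which depend, through part (iii) of the present lemma, on the very statement you are trying to prove. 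So that route is circular. The paper instead proves the $\swap{10}{11}$ case by a direct calculation: write $\swap{000}{001}\,\swap{10}{11} = \swap{1}{00}\,\swap{10}{11}\,\swap{1}{00}\,\swap{10}{11}$ using part (i), split one occurrence of $\swap{1}{00}$ as $\swap{10}{000}\,\swap{11}{001}$ via Relation~\ref{R:split}, and push $\swap{10}{11}$ through using Lemma~\ref{lem:23^2}, Equation~\eqref{eq:23^12=23} and Lemma~\ref{lem:23help}\ref{23split-commute}, reassembling at the end. You need to supply such a computation, using only material established before this point; until then part (ii) --- and with it part (iii) and everything downstream --- is unproven.
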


\begin{proof}
  \ref{(000,001)-shrink}~This follows using the definitions in
  \eqref{D:13}~and~\eqref{D:33}:
  \begin{multline*}
    \swap{000}{001}^{\swap{1}{00}} =
    \swap{1}{000}^{\swap{1}{001} \, \swap{1}{00}} =
    \swap{1}{000}^{\swap{1}{00} \, \swap{00}{11}} \\
    = \swap{00}{10}^{\swap{00}{11}} = \swap{10}{11}.
  \end{multline*}

  \ref{(000,001)-commute}~First we know, by Lemma~\ref{lem:23^2}, that
  $\swap{10}{11}$~commutes with~$\swap{01}{00x}$ for any $x \in X$.
  Conjugating by~$\swap{1}{00}$, using part~\ref{(000,001)-shrink} and
  Lemma~\ref{lem:23help}\ref{(01,00x)-shrink}, establishes that
  $\swap{000}{001}$~commutes with~$\swap{01}{1x}$.

  Then we perform the following calculation:
  \begin{align*}
    \swap{000}{001} \, \swap{10}{11}
    &= \swap{1}{00} \, \swap{10}{11} \, \swap{1}{00} \, \swap{10}{11}
    &&\text{by part~\ref{(000,001)-shrink}} \\
    &= \swap{10}{000} \, \swap{11}{001} \, \swap{10}{11} \,
    \swap{1}{00} \, \swap{10}{11}
    &&\text{by Rel.~\ref{R:split}} \\
    &= \swap{10}{11} \, \swap{11}{000} \, \swap{10}{001} \,
    \swap{1}{00} \, \swap{10}{11}
    &&\text{by Lem.~\ref{lem:23^2}} \\
    &= \swap{10}{11} \, \swap{1}{00} \, \swap{10}{001} \,
    \swap{11}{000} \, \swap{10}{11}
    &&\text{by Eq.~\eqref{eq:23^12=23}} \\
    &= \swap{10}{11} \, \swap{1}{00} \, \swap{10}{11} \,
    \swap{11}{001} \, \swap{10}{000}
    &&\text{by Lem.~\ref{lem:23^2}} \\
    &= \swap{10}{11} \, \swap{1}{00} \, \swap{10}{11} \, \swap{1}{00} \\
    &\mbox{}\hspace*{18em}\makebox[0pt]{by Lem.~\ref{lem:23help}\ref{23split-commute} and
      Rel.~\ref{R:split}} \\
    &= \swap{10}{11} \, \swap{000}{001}
    &&\text{by part~\ref{(000,001)-shrink}.}
  \end{align*}
  Thus $\swap{000}{001}$~commutes with~$\swap{10}{11}$.

  \ref{k01^2}~This follows by the same argument as used in
  Lemma~\ref{lem:23^2}, noting that if a product~$\rho$ of elements
  from~$\Sw{2}$ fixes~$00$ then it lies in the subgroup generated by
  $\swap{01}{10}$,~$\swap{01}{11}$ and~$\swap{10}{11}$ and so commutes
  with~$\swap{000}{001}$ by part~\ref{(000,001)-commute}.
\end{proof}

For \eqstate{eq:23^23=k01}, start with the equations
$\swap{01}{1x}^{\swap{01}{1\bar{x}}} = \swap{10}{11}$ and then
conjugate by~$\swap{1}{00}$.  Use
Lemmas~\ref{lem:23help}\ref{(01,00x)-shrink}
and~\ref{lem:k01help}\ref{(000,001)-shrink} to conclude
\[
\swap{01}{00x}^{\swap{01}{00\bar{x}}} = \swap{000}{001}
\]
for any $x \in X$.  The required equation now follows by conjugating
by a product of elements from~$\Sw{2}$ and using Lemmas~\ref{lem:23^2}
and Lemma~\ref{lem:k01help}\ref{k01^2}.

To establish \eqstate{eq:23^k01=23}, first use Lemma~\ref{lem:23^2} to
conclude that $\swap{10}{000}^{\swap{10}{11}} = \swap{11}{000}$.  Now
conjugate by~$\swap{1}{00}$ and use Equation~\eqref{eq:23^12=23} and
Lemma~\ref{lem:k01help}\ref{(000,001)-shrink} to establish the formula
$\swap{10}{000}^{\swap{000}{001}} = \swap{10}{001}$.  Conjugating by
an appropriate product of elements from~$\Sw{2}$ and use of
Lemmas~\ref{lem:23^2} and~\ref{lem:k01help}\ref{k01^2} establishes
$\swap{\kappa}{\lambda0}^{\swap{\lambda0}{\lambda1}} =
\swap{\kappa}{\lambda1}$, which is sufficient to verify
Equation~\eqref{eq:23^k01=23}.

We deduce \eqstate{eq:[12,k01]} by starting with $[
  \swap{00}{01} , \swap{10}{11} ] = 1$ and conjugating
by~$\swap{1}{00}$ to yield $[ \swap{1}{01} , \swap{000}{001} ] = 1$,
using Relation~\ref{R:12} and
Lemma~\ref{lem:k01help}\ref{(000,001)-shrink}.  Conjugating by an
appropriate product of elements from~$\Sw{2}$ and using
Lemmas~\ref{lem:12^2} and~\ref{lem:k01help}\ref{k01^2} establishes the
required relation.

\subsection{\boldmath Remaining relations involving $\Sw{2}$
  and~$\Sw{23}$}

We now establish all the relations remaining that involve just swaps
from $\Sw{2}$~and~$\Sw{23}$.  Careful analysis shows that the ones we
are currently missing are, for $x,y \in X$ and distinct
$\kappa,\lambda,\mu,\nu \in X^{2}$, the following:
\begin{align}
  [ \swap{\kappa}{\lambda0} , \swap{\mu}{\lambda1} ] &= 1
  \label{eq:[23-k0,23-k1]} \\
  [ \swap{\kappa}{\lambda x} , \swap{\mu}{\nu y} ] &= 1
  \label{eq:[23,23]}
\end{align}
Note that in establishing these equations we are essentially
establishing that swaps from~$\Sw{23}$ that have disjoint support (or,
more accurately, corresponding to maps in~$V$ with disjoint support)
commute.

\eqstate{eq:[23-k0,23-k1]} simply follows from
Lemma~\ref{lem:23help}\ref{23split-commute} using
Lemma~\ref{lem:23^2}.

Use of Lemma~\ref{lem:23^2} deduces $[ \swap{\kappa}{\lambda0} ,
  \swap{\mu}{\nu1} ] = [ \swap{\kappa}{\lambda1} , \swap{\mu}{\nu1} ]
= 1$ for all distinct $\kappa,\lambda,\mu,\nu \in X^{2}$ from
Relations~\ref{R:[23,23]}.  Consequently, in
the case of \eqstate{eq:[23,23]}, it remains to verify the relation in
the case when $x = y = 0$.  First observe that
\[
\swap{11}{011}^{\swap{000}{001}} =
\swap{11}{011}^{\swap{10}{000} \, \swap{10}{001} \, \swap{10}{000}}
= \swap{11}{011}
\]
by use of Equation~\eqref{eq:23^23=k01} and then repeated use of the
cases of Equation~\eqref{eq:[23,23]} that we already have; that is, $[
  \swap{11}{011} , \swap{000}{001} ] = 1$.  Now
\begin{align*}
  \swap{000}{001}^{\swap{10}{010}}
  &= \swap{000}{001}^{\swap{1}{01} \, \swap{11}{011}}
  &&\text{by Eq.~\eqref{eq:12-split}} \\
  &= \swap{000}{001}^{\swap{11}{011}}
  &&\text{by Eq.~\eqref{eq:[12,k01]}} \\
  &= \swap{000}{011}
  &&\text{as just established.}
\end{align*}
Thus, $\swap{10}{010}$~and~$\swap{000}{001}$ commute.  This means that
when we conjugate the relation $[ \swap{10}{010}, \swap{11}{001} ] =
1$, which is an instance of Equation~\eqref{eq:[23,23]} that we
already know, by the swap~$\swap{000}{001}$, we obtain
\[
[ \swap{10}{010} , \swap{11}{000} ] = 1,
\]
with use of Equation~\eqref{eq:23^k01=23}.  We now make use of
Lemma~\ref{lem:23^2} in our usual way to establish the missing case of
Equation~\eqref{eq:[23,23]}, namely when $x = y = 0$.

\subsection{\boldmath Relations involving $\Sw{12}$, $\Sw{13}$,
  $\Sw{2}$ and~$\Sw{23}$}

We now establish all the relations we require that involve swaps from
$\Sw{12}$,~$\Sw{13}$, $\Sw{2}$ and~$\Sw{23}$.  In view of the
relations that we have already obtained, we can assume that at least
one swap from~$\Sw{13}$ and at least one from~$\Sw{23}$ occur within
our relation.  The relations we need to establish are therefore, for
$x,y,z \in X$, the following:
\begin{align}
  [ \swap{x}{\bar{x}yz} , \swap{\bar{x}\bar{y}}{\bar{x}y\bar{z}} ] &=
  1
  \label{eq:[13,23]}
  \\
  \swap{x}{\bar{x}y}^{\swap{x}{\bar{x}\bar{y}z}} &=
  \swap{\bar{x}y}{\bar{x}\bar{y}z}
  \label{eq:12^13=23}
  \\
  \swap{x}{\bar{x}y}^{\swap{\bar{x}y}{\bar{x}\bar{y}z}} &=
  \swap{x}{\bar{x}\bar{y}z}
  \label{eq:12^23=13}
  \\
  \swap{x}{\bar{x}yz}^{\swap{x}{\bar{x}\bar{y}}} &=
  \swap{\bar{x}\bar{y}}{\bar{x}yz}
  \label{eq:13^12=23}
\end{align}

For \eqstate{eq:[13,23]}, take the equation $[ \swap{00}{10} ,
  \swap{01}{11} ] = 1$, conjugate by~$\swap{1}{00}$ and use the
definition in~\eqref{D:13} and
Lemma~\ref{lem:23help}\ref{(01,00x)-shrink} to conclude $[
  \swap{1}{000} , \swap{01}{001} ] = 1$.  The required equation then
follows, as usual, by use of Lemmas~\ref{lem:13^2}
and~\ref{lem:23^2}.

For \eqstate{eq:12^13=23}, we calculate as follows:
\begin{align*}
  \swap{1}{00}^{\swap{1}{01z}}
  &= \swap{1}{00}^{\swap{00}{01} \, \swap{1}{00z} \, \swap{00}{01}}
  &&\text{by the definition in~\eqref{D:13}} \\
  &= \swap{1}{01}^{\swap{1}{00z} \, \swap{00}{01}}
  &&\text{by the definition in~\eqref{D:12}} \\
  &= \swap{1}{01}^{\swap{1}{00} \, \swap{00}{1z} \, \swap{1}{00} \,
    \swap{00}{01}}
  &&\text{by the definition in~\eqref{D:13}} \\
  &= \swap{01}{1z}^{\swap{1}{00} \, \swap{00}{01}}
  &&\text{using Rels.\ \ref{R:12} and~\ref{R:S4}} \\
  &= \swap{01}{00z}^{\swap{00}{01}}
  &&\text{by Eq.~\eqref{eq:22^12=23}} \\
  &= \swap{00}{01z}
  &&\text{by Lem.~\ref{lem:23^2}.}
\end{align*}
The required equation now follows using Lemmas~\ref{lem:12^2},
\ref{lem:13^2} and~\ref{lem:23^2}.

For \eqstate{eq:12^23=13}, our main calculation is
\begin{multline*}
  \swap{1}{00}^{\swap{00}{01z}} =
  \swap{1}{00}^{\swap{1}{00} \, \swap{1}{01z} \, \swap{1}{00}} =
  \swap{1}{00}^{\swap{1}{01z} \, \swap{1}{00}} \\*
  = \swap{00}{01z}^{\swap{1}{00}} =
  \swap{1}{01z},
\end{multline*}
obtained by exploiting the definition of~$\swap{00}{01z}$
in~\eqref{D:23} and Equation \eqref{eq:12^13=23} above.  The required
equation again follows by Lemmas~\ref{lem:12^2}, \ref{lem:13^2}
and~\ref{lem:23^2}.

\eqstate{eq:13^12=23} follows from the definition of~$\swap{1}{01z}$
as in Equation~\eqref{D:13} with use of Lemmas~\ref{lem:12^2},
\ref{lem:13^2} and~\ref{lem:23^2}.

\subsection{\boldmath First relations involving $\Sw{2}$ and $\Sw{3}$
  only}

We now introduce swaps from~$\Sw{3}$ into the relations we verify.
The first step is to establish that swaps from~$\Sw{3}$ behave well
when we conjugate by one from~$\Sw{2}$ and then our other split
relation.  All other relations involving just swaps from
$\Sw{2}$~and~$\Sw{3}$ will have to wait until we have established some
more intermediate relations.  Accordingly, we start with the following
for $x,y \in X$, distinct $\kappa,\lambda \in X^{2}$ and $\tau \in
\Sw{2}$ for which Equation~\eqref{eq:33^2} is defined:
\begin{align}
  \swap{\kappa x}{\lambda y}^{\tau} &=
  \swap{(\kappa x)\!\cdot\!\tau\;}{(\lambda y)\!\cdot\!\tau}
  \label{eq:33^2}
  \\
  \swap{\kappa}{\lambda} &=
  \swap{\kappa0}{\kappa1} \, \swap{\lambda0}{\lambda1}
  \label{eq:22-split}
\end{align}

As with those from~$\Sw{23}$, we begin with a lemma to manipulate the
basic $\Sw{3}$\nbd swaps from the definition in~\eqref{D:33}.  In
particular, we will establish \eqstate{eq:33^2} as part~\ref{33^2} in
the lemma.

\begin{lemma}
  \label{lem:33help}
  \begin{enumerate}
  \item
    \label{(000,01x)-shrink}
    $\swap{000}{01x}^{\swap{1}{00}} = \swap{10}{01x}$, for any $x \in
    X$;
    
  \item
    \label{(001,011)-shrink}
    $\swap{001}{011}^{\swap{1}{00}} = \swap{11}{011}$;

  \item
    \label{33-commute}
    $\swap{000}{010}$,~$\swap{000}{011}$ and~$\swap{001}{011}$ each
    commute with~$\swap{10}{11}$.

  \item
    \label{33^2}
    All conjugacy relations of the form~$\sigma^{\tau} = \upsilon$,
    where $\sigma,\upsilon \in \Sw{3}$ and~$\tau \in \Sw{2}$, can be
    deduced from Relations~\ref{R:S4}--\ref{R:[33,23]}.

  \item
    \label{(001,010)-shrink}
    $\swap{001}{010}^{\swap{1}{00}} = \swap{11}{010}$;
  \end{enumerate}
\end{lemma}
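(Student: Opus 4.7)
Parts~(i), (ii), and~(v) admit parallel proofs by unfolding the definition of the relevant $\Sw{3}$ swap from~\eqref{D:33} (or~\eqref{D:other33} for~(v)). For part~(i), starting from $\swap{000}{01x} = \swap{1}{000}^{\swap{1}{01x}}$ I would write
\[
\swap{000}{01x}^{\swap{1}{00}} = \swap{1}{000}^{\swap{1}{01x}\,\swap{1}{00}}
\]
and handle the exponent by first establishing the intermediate identity $\swap{1}{01x}^{\swap{1}{00}} = \swap{00}{01x}$. This follows by combining $\swap{1}{01x} = \swap{1}{00x}^{\swap{00}{01}}$ from~\eqref{D:13}, the consequence $\swap{00}{01}\,\swap{1}{00} = \swap{1}{00}\,\swap{1}{01}$ of Relation~\ref{R:12}, the relation $\swap{1}{00x}^{\swap{1}{00}} = \swap{00}{1x}$ from~\eqref{D:13}, and Equation~\eqref{eq:22^12=23}. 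Combined with $\swap{1}{000}^{\swap{1}{00}} = \swap{00}{10}$ and a final invocation of Equation~\eqref{eq:23^23=22} giving $\swap{00}{10}^{\swap{00}{01x}} = \swap{10}{01x}$, this completes part~(i). Parts~(ii) and~(v) follow by the same template, each terminating with an appeal to~\eqref{eq:23^23=22}.

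For part~(iii) I would conjugate each of the three target commutators by~$\swap{1}{00}$. Using Lemma~\ref{lem:k01help}\ref{(000,001)-shrink} to rewrite $\swap{10}{11}^{\swap{1}{00}} = \swap{000}{001}$ together with parts~(i) and~(ii) to rewrite the conjugates of the three $\Sw{3}$ swaps, the three commutators transform into
\[
[\swap{10}{010},\swap{000}{001}] = [\swap{10}{011},\swap{000}{001}] = [\swap{11}{011},\swap{000}{001}] = 1.
\]
Each pairs a $\Sw{23}$ swap with the $\Sw{3}$ swap~$\swap{000}{001}$. I would then express the $\Sw{23}$ factor via~\eqref{D:23} as a $\Sw{2}$-conjugate of a level-$2$ swap from~$\Sw{2}$, and move the conjugating element through, reducing the problem to commutators of the form $[\text{$\Sw{2}$ swap},\,\swap{\mu 0}{\mu 1}]$ (since each conjugate $\swap{000}{001}^{\sigma}$ is itself of this form by~\eqref{D:33}). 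These in turn follow by combining Lemma~\ref{lem:k01help}\ref{(000,001)-commute} (which yields that $\swap{000}{001}$ commutes with the stabiliser of~$00$ in~$\Sw{2}$), Equation~\eqref{eq:[23,23]}, Relations~\ref{R:[23,23]} and~\ref{R:[33,23]}, and the conjugacy results from Lemma~\ref{lem:23^2} and Lemma~\ref{lem:k01help}\ref{k01^2}.

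Part~(iv) follows the template of Lemma~\ref{lem:k01help}\ref{k01^2}. The definitions~\eqref{D:33} and~\eqref{D:other33} exhibit every $\Sw{3}$ swap as a $\Sw{2}$-conjugate of one of the four basic swaps $\swap{000}{001}$, $\swap{000}{010}$, $\swap{000}{011}$, $\swap{001}{011}$. Given a target relation $\sigma^{\tau} = \upsilon$, write $\sigma = \beta^{\rho_{1}}$ and $\upsilon = \beta^{\rho_{2}}$ for a suitable basic~$\beta$; then $\rho_{1}\tau\rho_{2}^{-1}$ acts on~$X^{2}$ fixing both labels of~$\beta$ and so lies in the corresponding point-stabiliser subgroup of~$\Sw{2}$. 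Part~(iii) together with Lemma~\ref{lem:k01help}\ref{(000,001)-commute} shows that $\beta$ commutes with this stabiliser, whence $\sigma^{\tau} = \upsilon$.

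The hardest step is part~(iii). Although the commutations are geometrically obvious (the swaps involved correspond to maps with disjoint support in~$\C$), deducing them purely from Relations~\ref{R:S4}--\ref{R:[33,23]} requires an intricate chain of substitutions across Lemmas~\ref{lem:12^2}, \ref{lem:13^2}, \ref{lem:23^2}, and~\ref{lem:k01help}, with delicate bookkeeping of the conjugating exponents that repeatedly threaten to escape out of level~$3$.
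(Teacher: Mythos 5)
Parts~(i), (ii) and~(iv) of your proposal match the paper's proof essentially step for step: the paper unfolds $\swap{000}{01x}=\swap{1}{000}^{\swap{1}{01x}}$, moves $\swap{1}{00}$ past $\swap{1}{01x}$ (your intermediate identity $\swap{1}{01x}^{\swap{1}{00}}=\swap{00}{01x}$ is exactly Equation~\eqref{eq:13^12=23}, indeed the definition in~\eqref{D:23}), and finishes with~\eqref{eq:23^23=22}; part~(iv) is precisely the stabiliser argument you give. The genuine problems are in~(iii) and~(v). For~(iii), your reduction by conjugating by~$\swap{1}{00}$ to the three commutators $[\swap{10}{010},\swap{000}{001}]=[\swap{10}{011},\swap{000}{001}]=[\swap{11}{011},\swap{000}{001}]=1$ is legitimate, but the mechanism you propose for finishing does not work: a $\Sw{23}$ swap is not ``a $\Sw{2}$-conjugate of a level-$2$ swap from~$\Sw{2}$'' (conjugation by $\Sw{2}$ elements carries $\Sw{23}$ swaps to $\Sw{23}$ swaps, by Lemma~\ref{lem:23^2}), so the claimed reduction to commutators of the form $[\Sw{2}\text{-swap},\swap{\mu0}{\mu1}]$ is unavailable. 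Worse, your three target commutators are instances of Equation~\eqref{eq:[23,k01]}, which the paper only derives \emph{later, from} part~(iii), so you must be careful not to lean on anything downstream. What actually closes the gap is a factorisation of $\swap{000}{001}$ into $\Sw{23}$ swaps with controllable supports: Equation~\eqref{eq:23^23=k01} gives $\swap{000}{001}=\swap{11}{000}^{\swap{11}{001}}$ (and $=\swap{10}{000}^{\swap{10}{001}}$), after which two applications of the disjoint-support relation~\eqref{eq:[23,23]} dispose of each commutator; neither of these tools appears in the list you give. The paper itself takes a different route entirely, conjugating $\swap{000}{01x}$ by $\swap{10}{11}\,\swap{1}{00}$ and expanding $\swap{1}{00}$ via Relation~\ref{R:split}, then cancelling with Lemma~\ref{lem:23^2}, \eqref{eq:23^23=22} and~\eqref{eq:[23,23]}.

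Part~(v) does not follow ``by the same template'' as (i) and~(ii), and the paper explicitly warns that it ``requires a different argument.'' The swap $\swap{001}{010}$ is not defined in~\eqref{D:33} as $\swap{1}{\alpha}^{\swap{1}{\beta}}$; it is $\swap{000}{011}^{\rho_{01,00}}$ via~\eqref{D:other33}, where $\rho_{01,00}$ is an arbitrary fixed product of swaps from~$\Sw{2}$ interchanging $00$ and~$01$. Normalising that conjugator to~$\swap{00}{01}$ already requires part~(iv), so (v) must be proved \emph{after} (iii) and~(iv), not alongside (i) and~(ii); the computation then runs
\[
\swap{001}{010}^{\swap{1}{00}}=\swap{000}{011}^{\swap{00}{01}\,\swap{1}{00}}=\swap{000}{011}^{\swap{1}{00}\,\swap{1}{01}}=\swap{10}{011}^{\swap{1}{01}}=\swap{11}{010},
\]
using part~(iv), Relation~\ref{R:12}, part~(i) and Equation~\eqref{eq:23^12=23} --- not terminating with~\eqref{eq:23^23=22} as you claim.
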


\begin{proof}
  \ref{(000,01x)-shrink} We calculate as follows:
  \begin{align*}
    \swap{000}{01x}^{\swap{1}{00}} &=
    \swap{1}{000}^{\swap{1}{01x} \, \swap{1}{00}}
    &&\text{by the definition in~\eqref{D:33}} \\
    &= \swap{1}{000}^{\swap{1}{00} \, \swap{00}{01x}}
    &&\text{by Eq.~\eqref{eq:13^12=23}} \\
    &= \swap{00}{10}^{\swap{00}{01x}}
    &&\text{by the definition in~\eqref{D:13}} \\
    &= \swap{10}{01x}
    &&\text{by Eq.~\eqref{eq:23^23=22}.}
  \end{align*}

  Part~\ref{(001,011)-shrink} is established in exactly the same way.

  \ref{33-commute}~We perform the following calculations:
  \begin{align*}
    \lefteqn{\swap{000}{01x}^{\swap{10}{11} \, \swap{1}{00}}} \hspace*{3em} & \\*
    &= \swap{10}{01x}^{\swap{1}{00} \, \swap{10}{11} \, \swap{1}{00}}
    &&\text{by part~\ref{(000,01x)-shrink}} \\
    &= \swap{10}{01x}^{\swap{10}{000} \, \swap{11}{001} \,
      \swap{10}{11} \, \swap{1}{00}}
    &&\text{by Rel.~\ref{R:split}} \\
    &= \swap{11}{01x}^{\swap{11}{000} \, \swap{10}{001} \,
      \swap{1}{00}}
    &&\text{by Lem.~\ref{lem:23^2}} \\
    &= \swap{11}{01x}^{\swap{11}{000} \, \swap{1}{00}}
    &&\text{by Eq.~\eqref{eq:[23,23]} twice} \\
    &= \swap{11}{01x}^{\swap{11}{000} \, \swap{10}{000} \,
      \swap{11}{001}}
    &&\text{by Rel.~\ref{R:split}} \\
    &= \swap{11}{01x}^{\swap{10}{000} \, \swap{10}{11} \,
      \swap{11}{001}}
    &&\text{by Eq.~\eqref{eq:23^23=22}} \\
    &= \swap{11}{01x}^{\swap{10}{11} \, \swap{11}{001}}
    &&\text{by Eq.~\eqref{eq:[23,23]}} \\
    &= \swap{10}{01x}^{\swap{11}{001}}
    &&\text{by Lem.~\ref{lem:23^2}} \\
    &= \swap{10}{01x}
    &&\text{by Eq.~\eqref{eq:[23,23]}.}
  \end{align*}
  Thus $\swap{000}{01x}^{\swap{10}{11}} =
  \swap{10}{01x}^{\swap{1}{00}} = \swap{000}{01x}$, again by
  part~\ref{(000,01x)-shrink}.  A similar argument,
  using~\ref{(001,011)-shrink}, shows that $\swap{001}{011}$~commutes
  with~$\swap{10}{11}$.

  \ref{33^2}~When $\sigma$ (and~$\upsilon$) has the
  form~$\swap{\kappa0}{\kappa1}$, this result was established in
  Lemma~\ref{lem:k01help}\ref{k01^2}.  All remaining swaps in~$\Sw{3}$
  are defined by conjugating one of $\swap{000}{010}$,
  $\swap{000}{011}$ or~$\swap{001}{011}$ by some product of elements
  from~$\Sw{2}$.  The result then follows by the same argument, but
  now relying upon part~\ref{33-commute}.

  \ref{(001,010)-shrink}~appears to be similar to the first two parts
  of the lemma, but actually requires a different argument, based on
  what we have just established:
  \begin{multline*}
    \swap{001}{010}^{\swap{1}{00}} =
    \swap{000}{011}^{\swap{00}{01} \, \swap{1}{00}} =
    \swap{000}{011}^{\swap{1}{00} \, \swap{1}{01}} \\
    = \swap{10}{011}^{\swap{1}{01}} =
    \swap{11}{010},
  \end{multline*}
  by part~\ref{33^2}, Relation~\ref{R:12}, part~\ref{(000,01x)-shrink}
  and Equation~\eqref{eq:23^12=23}.
\end{proof}

\eqstate{eq:22-split} is now established by taking the equation
$\swap{1}{01} = \swap{10}{010} \, \swap{11}{011}$, which is an
instance of Equation~\eqref{eq:12-split}, and conjugating
by~$\swap{1}{00}$ to yield
\[
\swap{00}{01} = \swap{000}{010} \, \swap{001}{011},
\]
using Relation~\ref{R:12} and
Lemma~\ref{lem:33help}\ref{(000,01x)-shrink}
and~\ref{(001,011)-shrink}.  The required relation then follows by
conjugating by a product of elements from~$\Sw{2}$ and using the
Relations~\ref{R:S4} and Lemma~\ref{lem:33help}\ref{33^2}.

\subsection{\boldmath Further intermediate relations involving
  $\Sw{23}$ and~$\Sw{3}$}

Our principal direction of travel at this stage is to complete the
verification of those relations involving swaps from
$\Sw{2}$~and~$\Sw{3}$ to supplement those in Equations~\eqref{eq:33^2}
and~\eqref{eq:22-split}.  However, to achieve this we need some
intermediate results making use of swaps from~$\Sw{23}$, specifically,
for $x \in X$, distinct $\kappa,\lambda,\mu \in X^{2}$ and distinct
$\alpha,\beta \in X^{3}$ for which $\kappa \perp \alpha,\beta$, the
following:
\begin{align}
  \swap{\kappa}{\alpha}^{\swap{\kappa}{\beta}} &= \swap{\alpha}{\beta}
  \label{eq:23^23=33}
  \\
  \swap{\kappa}{\alpha}^{\swap{\alpha}{\beta}} &= \swap{\kappa}{\beta}
  \label{eq:23^33=23}
  \\
  [ \swap{\kappa}{\lambda x}, \swap{\mu0}{\mu1} ] &= 1
  \label{eq:[23,k01]}
\end{align}

For \eqstate{eq:23^23=33}, first note that
Equation~\eqref{eq:23^23=k01} deals with the case when
$\alpha$~and~$\beta$ share the same two-letter prefix.  For the
remaining cases, first observe
\begin{align*}
  \lefteqn{\swap{10}{000}^{\swap{10}{010} \, \swap{1}{010}}}\hspace*{4em} & \\*
  &= \swap{10}{000}^{\swap{10}{010} \, \swap{11}{011} \,
    \swap{1}{010}}
  &&\text{by Eqs.~\eqref{eq:[23-k0,23-k1]} and \eqref{eq:[23,23]}} \\
  &= \swap{10}{000}^{\swap{1}{01} \, \swap{1}{010}}
  &&\text{by Eq.~\eqref{eq:12-split}} \\
  &= \swap{10}{000}^{\swap{01}{10} \, \swap{1}{01}}
  &&\text{by Eq.~\eqref{eq:13^12=22}} \\
  &= \swap{01}{000}^{\swap{1}{01}}
  &&\text{by Lem.~\ref{lem:23^2}} \\
  &= \swap{1}{000}
  &&\text{by Eq.~\eqref{eq:13^12=23}.}
\end{align*}
Hence $\swap{10}{000}^{\swap{10}{010}} = \swap{1}{000}^{\swap{1}{010}}
= \swap{000}{010}$, using the definition in Equation~\eqref{D:33}.  A
similar argument establishes $\swap{11}{000}^{\swap{11}{011}} =
\swap{1}{000}^{\swap{1}{011}} = \swap{000}{011}$.  Equally we apply a
variant of the argument to obtain further equations:
\begin{align*}
  \swap{10}{011}^{\swap{10}{000} \, \swap{1}{011}}
  &= \swap{10}{011}^{\swap{1}{00} \, \swap{1}{011}}
  &&\text{arguing as before} \\
  &= \swap{10}{011}^{\swap{00}{011} \, \swap{1}{00}}
  &&\text{by Eq.~\eqref{eq:13^12=23}} \\
  &= \swap{00}{10}^{\swap{1}{00}}
  &&\text{by Eq.~\eqref{eq:23^23=22}} \\
  &= \swap{1}{000}
  &&\text{by the definition in~\eqref{D:13}.}
\end{align*}
Thus we obtain $\swap{10}{011}^{\swap{10}{000}} =
\swap{1}{000}^{\swap{1}{011}} = \swap{000}{011}$.  Similarly we
determine the equation $\swap{11}{011}^{\swap{11}{001}} =
\swap{001}{011}$.  Thus given any choice of $x,y \in X$, we have
obtained one example of a relation
\[
\swap{\kappa}{\lambda x}^{\swap{\kappa}{\mu y}} = \swap{\lambda x}{\mu y}
\]
(for some choice of distinct $\kappa$,~$\lambda$ and~$\mu$.)  We can
then obtain \emph{all} examples by use of Lemmas~\ref{lem:23^2}
and~\ref{lem:33help}\ref{33^2}.  This completes the establishment of
Equation~\eqref{eq:23^23=33}.

Equation~\eqref{eq:23^k01=23} is already \eqstate{eq:23^33=23} in the
case when $\alpha$~and~$\beta$ share the same two-letter prefix.  For
the remaining cases, use Equation~\eqref{eq:23^23=33} to tell us
$\swap{1x}{000}^{\swap{1x}{01y}} = \swap{000}{01y}$ for any $x,y \in
X$.  Conjugate this equation by~$\swap{1}{00}$ and use
Equation~\eqref{eq:23^12=23} and parts of Lemma~\ref{lem:33help} to
establish $\swap{10}{00x}^{\swap{00x}{01y}} = \swap{10}{01y}$.  All
cases of Equation~\ref{eq:23^33=23} now follow by conjugating by an
appropriate product of elements from~$\Sw{2}$.

For \eqstate{eq:[23,k01]}, start with the fact that
$\swap{10}{11}$~commutes with $\swap{000}{01x}$ for any~$x \in X$
(Lemma~\ref{lem:33help}\ref{33-commute}).  Conjugate by~$\swap{1}{00}$
and use Lemmas~\ref{lem:k01help}\ref{(000,001)-shrink}
and~\ref{lem:33help}\ref{(000,01x)-shrink} to conclude $[
  \swap{000}{001} , \swap{01}{01x} ] = 1$.  Then conjugating by a
product of swaps from~$\Sw{2}$ establishes the required equation.

\subsection{\boldmath Remaining relations involving only $\Sw{2}$,
  $\Sw{23}$ and~$\Sw{3}$}

Having established the intermediate relations, we can now establish
all remaining relations involving swaps only from
$\Sw{2}$~and~$\Sw{3}$.  We obtain those also involving swaps
from~$\Sw{23}$ at the same time.  When one analyses the relations
required, we find that they are, for $x,y \in X$, distinct
$\kappa,\lambda \in X^{2}$ and distinct $\alpha,\beta,\gamma,\delta
\in X^{3}$, the following:
\begin{align}
  [ \swap{\kappa0}{\kappa1} , \swap{\lambda x}{\mu y} ] &= 1
  \label{eq:[k01,33]}  
  \\
  \swap{\kappa x}{\lambda y}^{\swap{\kappa0}{\kappa1}} &=
  \swap{\kappa\bar{x}}{\lambda y}
  \label{eq:33^k01}
  \\
  [ \swap{\kappa}{\alpha} , \swap{\beta}{\gamma} ] &= 1
  &&\text{for $\kappa \perp \alpha,\beta,\gamma$}
  \label{eq:[23,33]}
  \\
  \swap{\alpha}{\beta}^{\swap{\alpha}{\gamma}} &= \swap{\beta}{\gamma}
  \label{eq:33^33}
  \\
  [ \swap{\alpha}{\beta} , \swap{\gamma}{\delta} ] &= 1
  \label{eq:[33,33]}
\end{align}

We establish \eqstate{eq:[k01,33]} by choosing $\nu$~to be the element
in~$X^{2} \setminus \{\kappa,\lambda,\mu\}$, using
Equation~\eqref{eq:23^23=33} to tell us $\swap{\lambda x}{\mu y} =
\swap{\nu}{\lambda x}^{\swap{\nu}{\mu y}}$ and then, as
Equation~\eqref{eq:[23,k01]} says that both $\swap{\nu}{\lambda x}$
and~$\swap{\nu}{\mu y}$ commute with~$\swap{\kappa0}{\kappa1}$, we
establish Equation~\eqref{eq:[k01,33]}.

By Lemma~\ref{lem:23^2}, $\swap{1x}{01y}^{\swap{10}{11}} =
\swap{1\bar{x}}{01y}$ for any $x,y \in X$.  Conjugate
by~$\swap{1}{00}$ and use
Lemma~\ref{lem:k01help}\ref{(000,001)-shrink} and parts of
Lemma~\ref{lem:33help} to conclude $\swap{00x}{01y}^{\swap{000}{001}}
= \swap{00\bar{x}}{01y}$.  \eqstate{eq:33^k01} then follows.

Our final equation involving swaps from $\Sw{23}$~and~$\Sw{3}$ is
\eqstate{eq:[23,33]}.  We need to establish this in a number of
stages.  If $x \in X$, we know that $\swap{10}{11}$~commutes with the
swap~$\swap{000}{01x}$ by Lemma~\ref{lem:33help}\ref{33-commute}.  If
we conjugate by~$\swap{1}{00}$ and use
Lemma~\ref{lem:k01help}\ref{(000,001)-shrink} and
Lemma~\ref{lem:33help}\ref{(000,01x)-shrink}, we conclude $[
  \swap{10}{01x}, \swap{000}{001} ] = 1$.  We then deduce
Equation~\eqref{eq:[23,33]} in the case when $\beta$~and~$\gamma$
share the same two-letter prefix in our now established manner.

The second case of the equation is when $\alpha$~and~$\beta$ share
their two-letter prefix.  Equation~\eqref{eq:[23,23]} tells us that $[
  \swap{1x}{000} , \swap{1\bar{x}}{01y} ] = 1$ for any $x,y \in X$.
Now conjugate by~$\swap{1}{00}$ and use Equation~\eqref{eq:23^12=23}
and parts of Lemma~\ref{lem:33help} to conclude $[ \swap{10}{00x} ,
  \swap{00\bar{x}}{01y} ] = 1$.  Equation~\eqref{eq:[23,33]} when
$\alpha$~and~$\beta$ share their two-letter prefix then follows.

It remains to deal with the case when $\alpha$,~$\beta$ and~$\gamma$
have distinct two-letter prefixes.  One particular case is our
Relation~\ref{R:[33,23]}: $[\swap{10}{110} , \swap{000}{010} ] = 1$.
Conjugating by~$\swap{110}{111}$, using Equation~\eqref{eq:23^k01=23}
and~\eqref{eq:[k01,33]}, we deduce $[\swap{10}{111} , \swap{000}{010}
] = 1$.  Similarly, conjugating what we now have, $[ \swap{10}{11x} ,
\swap{000}{010} ] = 1$ for any $x \in X$, by~$\swap{000}{001}$ and
use~\eqref{eq:[23,k01]} and~\eqref{eq:33^k01} to now conclude that
$[ \swap{10}{11x} , \swap{00y}{010}] = 1$ for all $x,y \in X$.
Finally use the same argument, conjugating by~$\swap{010}{011}$, to
conclude $[ \swap{10}{11x} , \swap{00y}{01z} ] = 1$ for all $x,y,z \in
X$.  The remaining case of Equation~\eqref{eq:[23,33]} now follows.

One case of \eqstate{eq:33^33}, namely when $\alpha$~and~$\gamma$
share the same two-letter prefix, has already been established as
Equation~\eqref{eq:33^k01}.  For the case when $\alpha$~and~$\beta$
share the same two-letter prefix, start with the equation
$\swap{10}{11}^{\swap{1x}{01y}} = \swap{1\bar{x}}{01y}$, for $x,y \in
X$, as given by Equation~\eqref{eq:23^23=22}.  Conjugate
by~$\swap{1}{00}$ and use
Lemma~\ref{lem:k01help}\ref{(000,001)-shrink} and parts from
Lemma~\ref{lem:33help} to conclude $\swap{000}{001}^{\swap{00x}{01y}}
= \swap{00\bar{x}}{01y}$.  From this the general formula
$\swap{\kappa0}{\kappa1}^{\swap{\kappa x}{\lambda y}} =
\swap{\kappa\bar{x}}{\lambda y}$ follows for distinct $\kappa,\lambda
\in X^{2}$.

For the case when $\alpha$,~$\beta$ and~$\gamma$ have distinct
two-letter prefixes, say $\alpha = \kappa x$, $\beta = \lambda y$ and
$\gamma = \mu z$, choose $\nu$~to be the other element of~$X^{2}$.
Then
\begin{align*}
\swap{\alpha}{\beta}^{\swap{\alpha}{\gamma}} =
\swap{\kappa x}{\lambda y}^{\swap{\kappa x}{\mu z}} &=
\swap{\kappa x}{\lambda y}^{\swap{\nu}{\kappa x} \, \swap{\nu}{\mu z}
  \, \swap{\nu}{\kappa x}} \\
&=
\swap{\nu}{\lambda y}^{\swap{\nu}{\mu z} \, \swap{\nu}{\kappa x}} \\
&= \swap{\lambda y}{\mu z}^{\swap{\nu}{\kappa x}} =
\swap{\lambda y}{\mu z} =
\swap{\beta}{\gamma},
\end{align*}
by Equations~\eqref{eq:23^23=33} (used three times)
and~\eqref{eq:[23,33]}.

Part of \eqstate{eq:[33,33]} has already been established as
Equation \eqref{eq:[k01,33]}, but we shall deal with our required
relation in full generality.  Indeed, first assume that
$\alpha$,~$\beta$, $\gamma$ and~$\delta$ have between them at most
three distinct two-letter prefixes.  Let $\nu \in X^{2}$ be different
from those two-letter prefixes.  Then write $\swap{\gamma}{\delta}$ as
$\swap{\nu}{\gamma} \, \swap{\nu}{\delta} \, \swap{\nu}{\gamma}$, by
Equation~\eqref{eq:23^23=33}, and observe this commutes
with~$\swap{\alpha}{\beta}$ using Equation~\eqref{eq:[23,33]}.

The case when $\alpha$,~$\beta$, $\gamma$ and~$\delta$ have four
distinct two-letter prefixes can then be deduced as follows.  Suppose
as $\alpha = \kappa x$ for some $\kappa \in X^{2}$ and $x \in X$.  By
the previous case, $\swap{\alpha}{\beta}$~commutes with both
$\swap{\kappa\bar{x}}{\gamma}$ and~$\swap{\kappa\bar{x}}{\delta}$, and
hence it also commutes with $\swap{\gamma}{\delta} =
\swap{\kappa\bar{x}}{\gamma}^{\swap{\kappa\bar{x}}{\delta}}$, using
Equation~\eqref{eq:33^33}.

\subsubsection{\boldmath Relations involving~$\Sw{3}$, at least one of
  $\Sw{12}$, $\Sw{13}$ and~$\Sw{23}$, and possibly~$\Sw{2}$}

We now establish the final batch of relations for this section.  These
involve swaps from~$\Sw{3}$ and at least one of $\Sw{12}$,~$\Sw{13}$
and~$\Sw{23}$, and are the following for $x,y,z,t \in X$ and distinct
$\alpha,\beta,\gamma \in X^{3}$ satisfying $x \not\prec
\alpha,\beta,\gamma$:
\begin{align}
  \swap{\bar{x}y0}{\bar{x}y1}^{\swap{x}{\bar{x}y}} &= \swap{x0}{x1}
  \label{eq:k01^12=22}
  \\
  [ \swap{x}{\alpha} , \swap{\beta}{\gamma} ] &= 1
  \label{eq:[13,33]}
  \\
  \swap{x}{\alpha}^{\swap{\alpha}{\beta}} &= \swap{x}{\beta}
  \label{eq:13^33=13}
  \\
  \swap{\alpha}{\beta}^{\swap{x}{\alpha}} &= \swap{x}{\beta}
  \label{eq:33^13=13}
  \\
  \swap{\bar{x}yz}{\bar{x}\bar{y}t}^{\swap{x}{\bar{x}y}} &=
  \swap{xz}{\bar{x}\bar{y}t}
  \label{eq:33^12=23}
\end{align}

\eqstate{eq:k01^12=22} follows from
Lemma~\ref{lem:k01help}\ref{(000,001)-shrink} by our standard
$\Sw{2}$-conjug\-ation argument.

\eqstate{eq:[13,33]} follows by taking the relation $[ \swap{00}{1y} ,
  \swap{1\bar{y}}{01z} ] = 1$ and the relation $[ \swap{00}{1y} ,
  \swap{010}{011} ] = 1$, which hold by Lemmas~\ref{lem:23^2}
and~\ref{lem:33help}\ref{33^2} respectively, and then conjugating
by~$\swap{1}{00}$ and proceeding as in previous arguments to conclude that
$[ \swap{x}{\kappa y} , \swap{\kappa\bar{y}}{\lambda z} ] = 1$ and
$[ \swap{x}{\kappa y} , \swap{\lambda0}{\lambda1} ] = 1$ for any
$x,y,z \in X$ and any distinct $\kappa,\lambda \in X^{2}$ with $x
\not\prec \kappa,\lambda$.

To establish \eqstate{eq:13^33=13}, conjugate the already established
equations $\swap{00}{1y}^{\swap{10}{11}} = \swap{00}{1\bar{y}}$ and
$\swap{00}{1y}^{\swap{1y}{01z}} = \swap{00}{01z}$, for $y,z \in X$,
by~$\swap{1}{00}$.  This yields $\swap{1}{00y}^{\swap{000}{001}} =
\swap{1}{00\bar{y}}$ and $\swap{1}{00y}^{\swap{00y}{01z}} =
\swap{1}{01z}$, which now yields the two forms of
Equation~\eqref{eq:13^33=13}: \ $\swap{x}{\kappa
  y}^{\swap{\kappa0}{\kappa1}} = \swap{x}{\kappa\bar{y}}$ and
$\swap{x}{\kappa y}^{\swap{\kappa y}{\lambda z}} = \swap{x}{\lambda
  z}$ for any $x,y,z \in X$ and distinct $\kappa,\lambda \in X^{2}$
with $x \not\prec \kappa,\lambda$.

For \eqstate{eq:33^13=13}, we shall show
$\swap{000}{001}^{\swap{1}{00x}} = \swap{1}{00\bar{x}}$ and
$\swap{00x}{01y}^{\swap{1}{01y}} = \swap{1}{00x}$ for any $x,y \in X$.
Four of these occurrences are found in the definitions
in~\eqref{D:33}, while the other two are deduced by conjugating
$\swap{10}{11}^{\swap{00}{10}} = \swap{00}{11}$ and
$\swap{11}{010}^{\swap{00}{010}} = \swap{00}{11}$ by~$\swap{1}{00}$.
The required equation then follows.

Finally, for \eqstate{eq:33^12=23}, from
Lemma~\ref{lem:33help}: $\swap{00z}{01t}^{\swap{1}{00}} =
\swap{1z}{01t}$ for any $z,t \in X$.  Then as in the previous
equations we conjugate by products of elements from~$\Sw{2}$.

\spc

We have now established all required relations of the form
$\sigma^{\tau} = \upsilon$ where $\sigma$,~$\tau$ and~$\upsilon$ come
from the sets $\Sw{2}$,~$\Sw{12}$, $\Sw{13}$, $\Sw{23}$ and~$\Sw{3}$.
This is the first stage in establishing the existence of the
homomorphism~$\psi$ in Diagram~\eqref{eq:hexagon} and, in particular,
the verification of Theorem~\ref{thm:main}.

\section{Verifying the Cannon--Floyd--Parry relations}
\label{sec:CFP}

In this section we describe how to verify that all the relations that
hold in R.~Thompson's group~$V$ can be deduced from those assumed in
Relations~\ref{R:S4}--\ref{R:[33,23]}.  We shall rely upon the work in
the previous section.  One might wonder whether it is possible to
proceed more directly to show, for example, that all relations holding
in~$V$ can be deduced from the infinitely many in the presentation in
Theorem~\ref{thm:infpres}.  It is a consequence of our results that
this can be done, but our own attempt to do so resulted in overly long
arguments replicating those already found in Section~6 of~\cite{CFP}.
We have chosen the more direct method of verifying the finite set of
relations known already to define~$V$.

In their paper (see~\cite[Lemma~6.1]{CFP}), Cannon--Floyd--Parry
provide the following presentation for~$V$.  It has generators
$A$,~$B$, $C$ and~$\pi_{0}$ and relations
\begin{center}
  \renewcommand{\theenumi}{CFP\arabic{enumi}}
  \begin{tabular}{rccrc}
    CFP1. & $[AB^{-1}, X_{2}] = 1$; &\qquad\qquad
    &CFP8. & $\pi_{1} \pi_{3} = \pi_{3} \pi_{1}$; \\
    CFP2. & $[AB^{-1}, X_{3}] = 1$; &
    &CFP9. & $(\pi_{2} \pi_{1})^{3} = 1$; \\
    CFP3. & $C_{1} = BC_{2}$; &
    &CFP10. & $X_{3} \pi_{1} = \pi_{1} X_{3}$; \\
    CFP4. & $C_{2}X_{2} = BC_{3}$; &
    &CFP11. & $\pi_{1} X_{2} = B \pi_{2} \pi_{1}$; \\
    CFP5. & $C_{1}A = C_{2}^{2}$; &
    &CFP12. & $\pi_{2} B = B \pi_{3}$; \\
    CFP6. & $C_{1}^{3} = 1$; &
    &CFP13. & $\pi_{1} C_{3} = C_{3} \pi_{2}$; \\
    CFP7. & $\pi_{1}^{2} = 1$; &
    &CFP14. & $(\pi_{1} C_{2})^{3} = 1$;
  \end{tabular}
\end{center}
where the elements appearing here are defined by the following
formulae $C_{n} = A^{-n+1} C B^{n-1}$, \ $X_{n} = A^{-n+1} B A^{n-1}$
both for $n \geq 1$, \ $\pi_{1} = C_{2}^{-1} \pi_{0} C_{2}$ and
$\pi_{n} = A^{-n+1} \pi_{1} A^{n-1}$ for $n \geq 2$.

Recall from Section~\ref{sec:prelims} that $P_{3}$~is the group
presented by generators $a = \swap{00}{01}$, \ $b = \swap{01}{10} \,
\swap{01}{11}$ and $c = \swap{1}{00}$ subject to relations
\ref{R:S4}--\ref{R:[33,23]}.  Define four new elements of~$P_{3}$ by
\begin{align*}
  \bar{A} &= \swap{0}{1} \, \swap{0}{10} \, \swap{10}{11};
  &\bar{B} &= \swap{10}{11} \, \swap{10}{110} \, \swap{110}{111}; \\
  \bar{C} &= \swap{10}{11} \, \swap{0}{10};
  &\bar{\pi}_{0} &= \swap{0}{10},
\end{align*}
and then new elements~$\bar{C}_{n}$, $\bar{X}_{n}$ and~$\bar{\pi}_{n}$
for $n \geq 1$ defined in terms of these four by same formulae used
when defining the relations for~$V$.

It is a consequence of the relations involving swaps from the sets
$\Sw{2}$,~$\Sw{12}$, $\Sw{13}$, $\Sw{23}$ and~$\Sw{3}$ established in
the previous section (i.e.,
Equations~\eqref{eq:12^11}--\eqref{eq:33^12=23}) that the elements
$\bar{A}$,~$\bar{B}$, $\bar{C}$ and $\bar{\pi}_{0}$ satisfy
CFP1--CFP14\@.  To verify this is a sequence of calculations.  Below
we present the verification of CFP1 for these elements.  For the
entertainment of the reader, Relation~CFP2 required the longest
calculation whilst the others are more straightforward.  The following
formulae are useful for this work.

\begin{lemma}
  The following formulae hold in~$G$:
  \begin{enumerate}
  \item
    \label{AB^-1}
    $\bar{A} \bar{B}^{-1} = \swap{00}{01} \, \swap{01}{10} \,
    \swap{0}{10}$;
  \item
    \label{X2}
    $\bar{X}_{2} = \swap{0}{11} \, \swap{00}{01} \, \swap{00}{010} \,
    \swap{010}{011} \, \swap{0}{11}$;
  \item
    \label{X3}
    $\bar{X}_{3} = \swap{0}{111} \, \swap{00}{01} \, \swap{00}{010} \,
    \swap{010}{011} \, \swap{0}{111}$;
  \item
    \label{C2}
    $\bar{C}_{2} = \swap{0}{10} \, \swap{0}{111} \, \swap{110}{111}$;
  \item $\bar{C}_{3} = \swap{0}{110} \, \swap{10}{111} \,
    \swap{0}{100} \, \swap{0}{101} \, \swap{10}{110} \,
    \swap{110}{111}$;
  \item $\bar{\pi}_{1} = \swap{10}{110}$;
  \item $\bar{\pi}_{2} = \swap{0}{11} \, \swap{00}{010} \,
    \swap{0}{11}$;
  \item $\bar{\pi}_{3} = \swap{0}{111} \, \swap{00}{010} \,
    \swap{0}{111}$.
  \end{enumerate}
\end{lemma}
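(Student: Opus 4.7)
The plan is to verify each of the eight formulae by direct computation in~$P_{3}$, using only the order, split, and conjugacy relations among swaps of depth at most~$3$ that were established in Section~\ref{sec:level3}. I would work through them roughly in the order listed, since parts~(ii)--(v) all require knowing how individual swaps transform under conjugation by~$\bar{A}$, and parts~(vi)--(viii) reuse the expression for~$\bar{C}_{2}$ obtained in~(iv).

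For part~(i), I would write $\bar{B}^{-1} = \swap{110}{111}\,\swap{10}{110}\,\swap{10}{11}$ (using that each swap has order~$2$), concatenate with~$\bar{A}$, and simplify by moving disjoint-support swaps past one another via Equations~\eqref{eq:[23,23]}, \eqref{eq:[13,23]}, \eqref{eq:[23,33]} as needed, cancelling adjacent equal swaps, and occasionally applying a split relation such as \eqref{eq:12-split} or \eqref{eq:22-split} to recombine a pair of shorter swaps into a longer one (or vice versa). Parts~(ii)--(v) are conjugation problems: $\bar{X}_{n} = \bar{A}^{-(n-1)}\bar{B}\bar{A}^{n-1}$ and $\bar{C}_{n} = \bar{A}^{-(n-1)}\bar{C}\bar{B}^{n-1}$. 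Here I would apply the conjugacy Equations~\eqref{eq:23^2}, \eqref{eq:33^2}, \eqref{eq:23^12=23}, \eqref{eq:33^12=23}, \eqref{eq:13^12=23}, etc., to each swap factor of~$\bar{B}$ (respectively~$\bar{C}$) separately, tracking how the addresses $10$,~$11$, $110$,~$111$ transform under successive applications of~$\bar{A}$ and~$\bar{B}$. Equation~\eqref{eq:22-split} is particularly useful since $\bar{A}$ itself contains $\swap{0}{1}$, which can be split as $\swap{00}{10}\,\swap{01}{11}$, allowing the conjugations to be performed factor by factor on fully expanded forms.

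Once~$\bar{C}_{2}$ has been computed in~(iv), parts~(vi)--(viii) are comparatively short. The element $\bar{\pi}_{1} = \bar{C}_{2}^{-1}\bar{\pi}_{0}\bar{C}_{2}$ is a conjugation of the single swap $\swap{0}{10}$ by the three-swap product $\swap{0}{10}\,\swap{0}{111}\,\swap{110}{111}$; applying the appropriate conjugacy relation three times in succession (tracking the action on the addresses $0$ and~$10$) collapses this to~$\swap{10}{110}$. The elements $\bar{\pi}_{2}$ and~$\bar{\pi}_{3}$ are then further conjugates of~$\bar{\pi}_{1}$ by~$\bar{A}$ and~$\bar{A}^{2}$, obtained by the same technique.

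The main obstacle will be sheer bookkeeping rather than any conceptual difficulty: no new relations or insights are required beyond those of Section~\ref{sec:level3}, but each of the eight calculations demands careful choices about when to cancel adjacent equal swaps, when to expand via a split relation so that a commutation step becomes available, and when to collapse a conjugation via one of the many conjugacy equations. This is exactly the sort of calculation the authors advertise as a routine exercise in the swap calculus, paralleling their subsequent verification of the CFP1--CFP14 relations in the rest of Section~\ref{sec:CFP}.
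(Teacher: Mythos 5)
Your proposal is correct and matches the paper's approach: the authors likewise verify these identities by direct calculation in the swap calculus, pushing swaps past one another via the conjugacy relations of Section~\ref{sec:level3} (with an occasional split relation and cancellation of order\nobreakdash-$2$ swaps). The paper in fact only writes out parts~(i) and~(ii) explicitly --- the ones needed for CFP1 --- and declares the rest ``established similarly,'' which is exactly the bookkeeping exercise you describe.
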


\begin{proof}
  We verify the two formulae, (i)~and~(ii), required to verify CFP1.
  Below, we principally rely upon the conjugacy
  relations, although a split relation is applied in one step.
  The other formulae listed are established
  similarly.
  
  \ref{AB^-1}~We calculate
  \begin{align*}
    \bar{A} \bar{B}^{-1} &=
    \swap{0}{1} \, \swap{0}{10} \, \swap{10}{11} \cdot \swap{110}{111}
    \, \swap{10}{110} \, \swap{10}{11} \\
    &= \swap{0}{1} \, \swap{0}{10} \, \swap{100}{101} \,
    \swap{11}{100} \\
    &= \swap{0}{1} \, \swap{00}{01} \, \swap{00}{11} \, \swap{0}{10}
    \\
    &= \swap{00}{10} \, \swap{01}{11} \, \swap{00}{01} \,
    \swap{00}{11} \, \swap{0}{10} \\
    &= \swap{00}{01} \, \swap{01}{10} \, \swap{0}{10}.
  \end{align*}

  \ref{X2}~We start with the definition of $\bar{A}$ and~$\bar{B}$:
  \begin{align*}
    \bar{X}_{2} &= \bar{A}^{-1} \bar{B} \bar{A} \\
    &= \swap{10}{11} \, \swap{0}{10} \, \swap{0}{1} \cdot
    \swap{10}{11} \, \swap{10}{110} \, \swap{110}{111} \cdot
    \swap{0}{1} \, \swap{0}{10} \, \swap{10}{11} \\
    &= \swap{10}{11} \, \swap{0}{10} \, \swap{00}{01} \,
    \swap{00}{010} \, \swap{010}{011} \, \swap{0}{10} \, \swap{10}{11}
    \\
    &= \swap{0}{11} \, \swap{00}{01} \, \swap{00}{010} \,
    \swap{010}{011} \, \swap{0}{11}.
  \end{align*}
\end{proof}

We now verify that the elements $\bar{A}$,~$\bar{B}$, $\bar{C}$
and~$\bar{\pi}_{0}$ of our~$P_{3}$ satisfy the relation~CFP1:
\begin{align*}
  [ \bar{A}\bar{B}^{-1} , \bar{X}_{2} ]
  &= \swap{0}{10} \, \swap{01}{10} \, \swap{00}{01} \cdot \swap{0}{11}
  \, \swap{010}{011} \, \swap{00}{010} \, \swap{00}{01} \,
  \swap{0}{11} \\*
  &\qquad \cdot \swap{00}{01} \, \swap{01}{10} \, \swap{0}{10} \cdot
  \swap{0}{11} \, \swap{00}{01} \, \swap{00}{010} \, \swap{010}{011}
  \\*
  &\qquad \cdot \swap{0}{11} \\
  &= \swap{0}{11} \, \swap{10}{11} \, \swap{10}{111} \,
  \swap{110}{111} \, \swap{010}{011} \, \swap{00}{010} \,
  \swap{00}{01} \\*
  &\qquad \cdot \swap{0}{11} \, \swap{00}{01} \, \swap{01}{10} \,
  \swap{0}{10} \, \swap{0}{11} \, \swap{00}{01} \, \swap{00}{010} \\*
  &\qquad \cdot \swap{010}{011} \, \swap{0}{11} \\
  &= \swap{0}{11} \, \swap{10}{11} \, \swap{10}{111} \,
  \swap{110}{111} \, \swap{010}{011} \, \swap{00}{010} \,
  \swap{00}{01} \\*
  &\qquad \cdot \swap{110}{111} \, \swap{10}{111} \, \swap{10}{11} \,
  \swap{00}{01} \, \swap{00}{010} \, \swap{010}{011} \, \swap{0}{11}
  \\
  &= \swap{0}{11} \, \swap{11}{101} \, \swap{100}{101} \,
  \swap{010}{011} \, \swap{00}{010} \, \swap{00}{01} \,
  \swap{100}{101} \\*
  &\qquad \cdot \swap{11}{101} \, \swap{00}{01} \, \swap{00}{010} \,
  \swap{010}{011} \, \swap{0}{11} \\
  &= \swap{0}{11} \, \swap{11}{101} \, \swap{100}{101} \,
  \swap{010}{011} \, \swap{00}{010} \, \swap{100}{101} \\*
  &\qquad \cdot \swap{11}{101} \, \swap{00}{010} \, \swap{010}{011} \,
  \swap{0}{11} \\
  &= \swap{0}{11} \, \swap{11}{101} \, \swap{010}{011} \,
  \swap{00}{010} \, \swap{11}{101} \, \swap{00}{010} \,
  \swap{010}{011} \\*
  &\qquad \cdot \swap{0}{11} \\
  &= \swap{0}{11} \, \swap{11}{101} \, \swap{010}{011} \,
  \swap{11}{101} \, \swap{010}{011} \, \swap{0}{11} \\
  &= 1
\end{align*}
(by first collecting $\swap{0}{11}$ to the left, then conjugating some
swaps by~$\swap{0}{11}$, some by~$\swap{10}{11}$, some
by~$\swap{00}{01}$, some by~$\swap{100}{101}$, then single swaps
by~$\swap{00}{010}$ and by~$\swap{11}{101}$, and finally exploiting
the fact our swaps have order~$2$).

Once we have established the fourteen relations CFP1--CFP14, it
follows that there is indeed a surjective homomorphism $\psi \colon V
\to \langle \bar{A}, \bar{B}, \bar{C}, \bar{\pi}_{0} \rangle$, as
indicated in the Diagram~\eqref{eq:hexagon} and used in the proof of
Theorems~\ref{thm:infpres} and~\ref{thm:main}.

\section{Final details for the proofs}
\label{sec:finaldetails}

In this section, we complete the technical details relied upon in the
proofs given in Section~\ref{sec:prelims}.

We first need to show that the subgroup $\langle \bar{A}, \bar{B},
\bar{C}, \bar{\pi}_{0} \rangle$ coincides with the group~$P_{3}$.
Indeed observe this subgroup contains all the following elements:
\begin{align*}
  \bar{\pi}_{0} &= \swap{1}{00} = c \\
  \bar{C} \bar{\pi}_{0} &= \swap{10}{11} \\
  ( \bar{C} \bar{\pi}_{0} )^{\bar{A} \bar{C}} &=
  \swap{10}{11}^{\swap{0}{1}} = \swap{00}{01} = a \\
  \bar{\pi}_{0}^{\bar{B}^{-1} \bar{C}} &=
  \swap{0}{10}^{\swap{110}{111} \, \swap{10}{110} \, \swap{0}{10}} =
  \swap{10}{110} \\
  \bar{\pi}_{0}^{\bar{B}^{-1} \bar{C}} \bar{C} \bar{\pi}_{0} \bar{B}
  &= \swap{110}{111}
\end{align*}
and
\[
\swap{10}{110}^{\swap{110}{111} \, \swap{10}{11} \, \swap{0}{10} \,
  \swap{10}{11}} = \swap{01}{10}.
\]
The Relations~\ref{R:S4} ensure that $b \in \langle \swap{00}{01} ,
\swap{01}{10} , \swap{10}{11} \rangle$, and so we now conclude that
this subgroup generated by $\bar{A}$,~$\bar{B}$, $\bar{C}$
and~$\bar{\pi}_{0}$ is the whole group~$P_{3}$.  This establishes the
following result and means that we have now completed all the details
required for the proofs of Theorem~\ref{thm:infpres}
and~\ref{thm:main}.

\begin{prop}
  \label{prop:P3-gens}
  The elements $\bar{A}$,~$\bar{B}$, $\bar{C}$ and~$\bar{\pi}_{0}$,
  defined earlier, generate the group~$P_{3}$. \qed
\end{prop}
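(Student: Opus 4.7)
The plan is to reduce $H \defeq \langle \bar{A}, \bar{B}, \bar{C}, \bar{\pi}_{0} \rangle = P_{3}$ to showing that each of the three defining generators $a = \swap{00}{01}$, \ $b = (01 \; 10 \; 11)$ and $c = \swap{1}{00}$ of~$P_{3}$ lies in~$H$; the reverse containment is automatic. The task therefore becomes exhibiting explicit words in $\bar{A}$,~$\bar{B}$, $\bar{C}$ and~$\bar{\pi}_{0}$ that realise each of $a$,~$b$ and~$c$, invoking the conjugacy and split relations already verified in Section~\ref{sec:level3} at each stage.

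First I would extract a couple of basic swaps from~$H$ by forming short products that telescope using only the order relation~$\swap{\alpha}{\beta}^{2} = 1$. Reading off the definitions, $\bar{C}\bar{\pi}_{0}$ collapses to~$\swap{10}{11}$ because the trailing factor~$\swap{0}{10}$ of~$\bar{C}$ cancels~$\bar{\pi}_{0}$, and $\bar{A}\bar{C}$ telescopes in the same way to~$\swap{0}{1}$. Both of these swaps then lie in~$H$.

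Next, I would conjugate to obtain $a$~and~$c$. The prefix action of~$\swap{0}{1}$ carries $\{10,11\}$ to~$\{00,01\}$ and~$\{0,10\}$ to~$\{1,00\}$, so the conjugacy relations from Section~\ref{sec:level3} give $(\bar{C}\bar{\pi}_{0})^{\bar{A}\bar{C}} = \swap{10}{11}^{\swap{0}{1}} = \swap{00}{01} = a$ and $\bar{\pi}_{0}^{\bar{A}\bar{C}} = \swap{0}{10}^{\swap{0}{1}} = \swap{1}{00} = c$. For~$b$, Relations~\ref{R:S4} imply that $\{\swap{00}{01}, \swap{01}{10}, \swap{10}{11}\}$ generates a copy of~$S_{4}$ inside~$P_{3}$ containing~$b$, so it suffices to produce one further swap, namely~$\swap{01}{10}$, as an element of~$H$.

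This last step is where the main bookkeeping lies. Using~$\bar{B}$ together with the already-constructed elements, I would peel off the intermediate swaps appearing in the definition of~$\bar{B}$: conjugating $\bar{\pi}_{0}$ by~$\bar{B}^{-1}\bar{C}$ produces~$\swap{10}{110}$ (since the prefix action of $\bar{B}^{-1}\bar{C}$ sends the pair~$\{0,10\}$ to~$\{10,110\}$), and then multiplying by~$\bar{C}\bar{\pi}_{0}\bar{B}$ isolates~$\swap{110}{111}$. A final conjugation of~$\swap{10}{110}$ by a word such as $\swap{110}{111}\,\swap{10}{11}\,\swap{0}{10}\,\swap{10}{11}$ --- whose prefix action carries the unordered pair $\{10,110\}$ to~$\{01,10\}$ --- yields~$\swap{01}{10}$ via the conjugacy relations already in hand. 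The principal obstacle is not conceptual but combinatorial: identifying, for each target swap, a conjugating word whose induced prefix action moves the source pair to the correct target; once such a word is written down, the Section~\ref{sec:level3} relations complete each step mechanically.
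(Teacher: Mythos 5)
Your proof is correct and follows essentially the same route as the paper: it telescopes $\bar{C}\bar{\pi}_{0}$ to $\swap{10}{11}$, conjugates by $\bar{A}\bar{C}=\swap{0}{1}$ to reach $\swap{00}{01}=a$, extracts $\swap{10}{110}$ and $\swap{110}{111}$ from $\bar{B}$, conjugates to $\swap{01}{10}$ by exactly the same word $\swap{110}{111}\,\swap{10}{11}\,\swap{0}{10}\,\swap{10}{11}$, and invokes Relations~\ref{R:S4} to recover $b$. The one divergence is in your favour: you obtain $c$ as $\bar{\pi}_{0}^{\bar{A}\bar{C}}=\swap{0}{10}^{\swap{0}{1}}=\swap{1}{00}$, which correctly handles the fact that $\bar{\pi}_{0}$ is defined as $\swap{0}{10}$ (not $\swap{1}{00}$), whereas the paper's first displayed line asserts $\bar{\pi}_{0}=\swap{1}{00}=c$ directly, which appears to be a slip.
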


Finally, we deduce a $2$\nbd generator presentation for~$V$ from
Theorem~\ref{thm:main}.  As noted in Section~\ref{sec:prelims}, the
following is the intermediate step used to establish
Theorem~\ref{thm:2gen-KB}.  Although the latter depends on computer
calculation, the following is established by purely theoretical
methods in line with our proofs of Theorems~\ref{thm:infpres}
and~\ref{thm:main}.

\begin{cor}
  \label{cor:2gen-Tietze}
  R.~Thompson's group~$V$ has a finite presentation with two
  generators and nine relations.
\end{cor}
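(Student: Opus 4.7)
The plan is to derive the stated presentation from the three-generator, eight-relation presentation of Theorem~\ref{thm:main} by a sequence of Tietze transformations, using the conversion formulae between $\{a,b,c\}$ and $\{u,v\}$ recorded in Section~\ref{sec:prelims}. First, I would enlarge the presentation of Theorem~\ref{thm:main} by adjoining two new generators $u$ and $v$ together with the defining relations
\[
u = a(a^{b}a^{b^{-1}})^{caca^{b}a^{b^{-1}a}}, \qquad v = b,
\]
producing a presentation on five generators and ten relations for the same group.

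The substantive step is to verify that the inverse expressions
\[
a = u^{3}, \qquad c = (u^{3})^{vu^{-2}vu^{3}}\,(u^{3})^{vu^{-1}vu^{3}v}
\]
are consequences of the enlarged presentation, and so may be adjoined as two further relations without altering the group. The cleanest route is to pass to the isomorphic group~$V$ (using Theorem~\ref{thm:main}) and to verify there that the word defining $u$ evaluates to the tree-pair homeomorphism shown in Figure~\ref{fig:2gens}; the two formulae above then follow by applying the prefix-substitution action~\eqref{eq:swap-act} to the trees in that figure. I expect this to be the most involved part of the argument, although it reduces to a routine (if mildly tedious) computation of compositions of tree-pair maps and requires no computer assistance.

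After adjoining these two identities we have five generators and twelve relations. I would then apply three standard Tietze eliminations in sequence: the relation $v=b$ permits eliminating $b$ (by replacing $b$ throughout the other relations by $v$ and discarding both the generator and this relation); the relation $a=u^{3}$ then permits eliminating $a$; and the formula for $c$ permits eliminating $c$. Each elimination removes exactly one generator and one relation, leaving a presentation on the two generators $u$ and $v$ with $12-3=9$ relations, as required. Note that the entire argument is purely theoretical, in contrast with the further reduction to seven relations carried out via the Knuth--Bendix Algorithm in Theorem~\ref{thm:2gen-KB}.
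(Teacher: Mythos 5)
Your proposal is correct and follows essentially the same route as the paper: both arguments hinge on verifying, by direct calculation in~$V$ (equivalently in~$P_{3}$, now known to be isomorphic to~$V$), the conversion formulae $u = a(a^{b}a^{b^{-1}})^{caca^{b}a^{b^{-1}a}}$, $v=b$, $a=u^{3}$ and $c=(u^{3})^{vu^{-2}vu^{3}}(u^{3})^{vu^{-1}vu^{3}v}$, and then apply Tietze transformations to arrive at the two-generator presentation whose nine relations are the eight substituted relators $r_{i}\bigl(u^{3},v,\gamma(u,v)\bigr)$ together with $u = w\bigl(u^{3},v,\gamma(u,v)\bigr)$. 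The only difference is presentational: you spell out the intermediate adjoin-and-eliminate steps of the Tietze process, which the paper compresses into a single assertion.
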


\begin{proof}
  We work in the group~$P_{3}$.  Define $u$~and~$v$ to be the elements
  \[
  u = \swap{00}{01} \, \swap{10}{110} \, \swap{10}{111}
  \qquad \text{and} \qquad
  v = b = ( 01 \; 10 \; ).
  \]
  (When interpreted via the isomorphism from~$P_{3}$ to~$V$, obtained
  by composing the maps specified in the diagram~\eqref{eq:hexagon},
  these two elements correspond to the element of~$V$ given by
  tree-pairs in Figure~\ref{fig:2gens} in the Introduction.)

  If we rely upon the relations that hold in~$P_{3}$ (i.e., simply
  calculating within R.~Thompson's group~$V$, as, by this stage, we
  have completed all steps in establishing Theorem~\ref{thm:main}),
  then we can obtain a formula for~$u$ as a product of the generators
  $a$,~$b$ and~$c$, for example,
  \[
  u = w(a,b,c) = a(a^ba^{b^{-1}})^{caca^{b}a^{b^{-1}a}}
  \]
  and the following formulae:
  \begin{align*}
    a &= \swap{00}{01} = u^{3}, \\
    \swap{10}{000} &= (u^{3})^{vu^{-2}vu^{3}}, \\
    \swap{11}{001} &= (u^{3})^{vu^{-1}vu^{3}v}
  \end{align*}
  Therefore $c = \gamma(u,v) = (u^{3})^{vu^{-2}vu^{3}} \,
  (u^{3})^{vu^{-1}vu^{3}v}$.  If $r_{1}(a,b,c)$,~$r_{2}(a,b,c)$,
  \dots, $r_{8}(a,b,c)$ denote the words in $a$,~$b$ and~$c$ that
  define our relations (see the Equations~\eqref{eq:KB-words} in
  Section~\ref{sec:prelims}, or alternatively,
  Equations~\eqref{eq:R-words}), then applying Tietze transformations
  shows that
  \begin{multline*}
    V \cong
    \left\langle \, u,v \; \middle| \; r_{1}\bigl(u^{3},v,\gamma(u,v)\bigr) =
    r_{2}\bigl(u^{3},v,\gamma(u,v)\bigr) = \dots = \right. \\
    \left. r_{8}\bigl(u^{3},v,\gamma(u,v)\bigr) = 1, 
    \quad u = w\bigl(u^{3},v,\gamma(u,v)\bigr) \, \right\rangle.
  \end{multline*}
  This establishes the corollary.
\end{proof}

As we described in Section~\ref{sec:prelims},
Theorem~\ref{thm:2gen-KB} is deduced from this presentation.  We
produce the formulae $r_{i}\bigl(u^{3},v,\gamma(u,v)\bigr)$, for $i =
1$,~$2$, \dots,~$8$, by taking $r_{i}$~to be the formulae
in~\eqref{eq:KB-words}.  We then apply the process of producing
equivalent relations for this $2$\nbd generator presentation by
repeatedly using the Knuth--Bendix Algorithm as described in
Section~\ref{sec:prelims}.  It is during this process that we discover
that two of the relations can be omitted since the relevant word
reduces to the identity.  The remaining seven relations reduce to
those listed in Theorem~\ref{thm:2gen-KB}.

\paragraph{Acknowledgements:} The authors wish to acknowledge partial
support by EPSRC grant EP/H011978/1.  We further wish to recognise our
gratitude to the authors of the computer packages that greatly
assisted when we performed calculations and also helped to reduce our
presentations: \GAP, KBMAG and the Java VTrees applet written by the
first author and Roman Kogan.


\begin{thebibliography}{99}


\bibitem{BelkForrest}
  James Belk and Bradley Forrest, ``Rearrangement groups of
  fractals,'' 2015 (preprint).  \texttt{arXiv:1510.03133}

\bibitem{Birget-circuits}
  Jean-Camille Birget, ``Circuits, the groups of Richard Thompson, and
  coNP-completeness,'' \textit{Internat.\ J. Algebra
    Comput.}~\textbf{16} (2006), no.~1, 35--90.

\bibitem{Birget2}
  Jean-Camille Birget, ``Factorizations of the Thompson--Higman
  groups, and circuit complexity,'' \textit{Internat.\ J. Algebra
    Comput.}~\textbf{18} (2008), no.~2, 285--320.

\bibitem{BMM}
  Collin Bleak, Francesco Matucci and Max Neunh\"{o}ffer, ``Embeddings
  into Thompson's group~$V$ and co$\mathcal{CF}$ groups,''
  \textit{J. Lond.\ Math.\ Soc.~(2)} \textbf{94} (2016), no.~2,
  583--597.

\bibitem{BleakSal}
  Collin Bleak and Olga, Salazar-D\'{\i}az, ``Free products in
  R.~Thompson's group~$V$,''
  \textit{Trans.\ Amer.\ Math.\ Soc.}~\textbf{365} (2013), no.~11,
  5967--5997.


\bibitem{Brin-higher}
  Matthew~G. Brin, ``Higher dimensional Thompson groups,''
  \textit{Geom.\ Dedicata}~\textbf{108} (2004), 163--192.

\bibitem{Brin-simplicity}
  Matthew~G. Brin, ``On the baker's map and the simplicity of the
  higher dimensional Thompson groups~$nV$,''
  \textit{Publ.\ Math.}~\textbf{54} (2010), no.~2, 433--439.

\bibitem{Brown}
  Kenneth~S. Brown, ``Finiteness properties of groups,''
  \textit{J. Pure Appl.\ Algebra}~\textbf{44} (1987), no.~1--3,
  45--75.

\bibitem{BrownGeog}
  Kenneth~S. Brown and Ross Geoghegan, ``An infinite-dimensional
  torsion-free $\mathrm{FP}_{\infty}$ group,''
  \textit{Invent.\ Math.}~\textbf{77} (1984), no.~2, 367--381.

\bibitem{BurgMozes97}
  Marc Burger and Shahar Mozes, ``Finitely presented simple groups and
  products of trees,'' \textit{C. R. Acad.\ Sci.\ Paris S\'{e}r.~I
    Math.}~\textbf{324} (1997), no.~7, 747--752.

\bibitem{BurgMozes01a}
  Marc Burger and Shahar Mozes, ``Groups acting on trees: from local
  to global structure,'' \textit{Inst.\ Hautes \'{E}tudes
    Sci.\ Publ.\ Math.}\ No.~92 (2001), 113--150.


\bibitem{CFP}
  J.~W. Cannon, W.~J. Floyd \& W.~R. Parry, ``Introductory notes on
  Richard Thompson's groups,'' \textit{Enseign.\ Math.~(2)}
  \textbf{42} (1996), no.~3--4, 215--256.

\bibitem{Dehornoy}
  Patrick Dehornoy, ``Geometric presentations for Thompson's groups,''
  \textit{J. Pure Appl.\ Algebra}~\textbf{203} (2005), no.~1--3,
  1--44.

\bibitem{GAP}
  The GAP~Group, \emph{GAP -- Groups, Algorithms, and Programming},
    Version~4.7.8, 2015, \verb+http://www.gap-system.org+.



\bibitem{Higman}
  Graham Higman, \textit{Finitely presented infinite simple groups},
  Notes on Pure Mathematics, No.~8 (1974), Department of Pure
  Mathematics, Department of Mathematics, I.A.S. Australian National
  University, Canberra, 1974.


\bibitem{KBMAG}
  Derek Holt, \emph{KBMAG, Knuth--Bendix on Monoid and Automatic
    Groups}, Version~1.5, 2009,
  \verb+http://www.warwick.ac.uk/~mareg/kbmag+.

\bibitem{KMPN}
  Dessislava~H. Kochloukova, Conchita Mart\'{\i}nez-P\'{e}rez and
  Brita E. A. Nucinkis, ``Cohomological finiteness properties of the
  Brin--Thompson--Higman groups $2V$ and~$3V$,''
  \textit{Proc.\ Edinb.\ Math.\ Soc.~(2)} \textbf{56} (2013), no.~3,
  777--804.

\bibitem{Lawson-completions}
  Mark~V. Lawson, ``Orthogonal completions of the polycyclic
  monoids,'' \textit{Comm.\ Algebra}~\textbf{35} (2007), no.~5,
  1651--1660.

\bibitem{Lawson2}
  Mark~V. Lawson, ``Subgroups of the group of homeomorphisms of the
  Cantor space and a duality between a class of inverse monoids and a
  class of Hausdorff \'{e}tale groupoids,''
  \textit{J. Algebra}~\textbf{462} (2016), 77--114.

\bibitem{MPN}
  Conchita Mart\'{\i}nez-P\'{e}rez and Brita E.~A. Nucinkis, ``Bredon
  cohomological finiteness conditions for generalisations of Thompson
  groups,'' \textit{Groups Geom.\ Dyn.}~\textbf{7} (2013), no.~4,
  931--959.


\bibitem{Rattaggi}
  Diego Rattaggi, ``A finitely presented torsion-free simple group,''
  \textit{J. Group Theory}~\textbf{10} (2007), no.~3, 363--371.

\bibitem{Robinson}
  Derek J.~S. Robinson, \textit{A Course in the Theory of Groups,
    Second Edition}, Graduate Texts in Mathematics~80,
  Springer-Verlag, New York, 1996.



\bibitem{ThompsonNotes}
  Richard~J. Thompson, Handwritten widely circulated notes, 1965.

\bibitem{Thumann}
  Werner Thumann, ``Operad groups and their finiteness properties,''
  \textit{Adv.\ Math.}~\textbf{307} (2017), 417--487.


\end{thebibliography}
\end{document}